\documentclass[11pt]{amsart}
\usepackage{amssymb,latexsym,amsmath,graphicx,graphics,epic,eepic}

\addtolength{\oddsidemargin}{-2pc}
\addtolength{\evensidemargin}{-2pc}
\addtolength{\textwidth}{4pc}

\theoremstyle{plain}
\newtheorem{theorem}{Theorem}
\numberwithin{theorem}{section}
\newtheorem{lemma}[theorem]{Lemma}
\newtheorem{proposition}[theorem]{Proposition}

\theoremstyle{definition}
\newtheorem{definition}[theorem]{Definition}

\newcommand{\D}{{\mathbb D}}
\newcommand{\C}{{\mathbb C}}
\newcommand{\R}{{\mathbb R}}
\newcommand{\Z}{{\mathbb Z}}
\newcommand{\Q}{{\mathbb Q}}
\renewcommand{\P}{{\mathbb P}}
\newcommand{\s}{{\mathbb S}}

              \newcommand{\J}{{\mathcal J}}
              \newcommand{\M}{{\mathcal M}}

              \newcommand{\E}{{\mathcal E}}


\begin{document}
\title{Finite group actions on symplectic Calabi-Yau $4$-manifolds with $b_1>0$}
\author{Weimin Chen}
\subjclass[2010]{Primary 57R55; Secondary 57S17, 57R17}
\keywords{Four-manifolds, smooth structures, finite group actions, fixed-point set structures, 
orbifolds, symplectic resolution, symplectic Calabi-Yau, configurations of symplectic surfaces, 
rational $4$-manifolds, branched coverings, pseudo-holomorphic curves.}
\thanks{}
\date{\today}
\maketitle

\begin{abstract}
This is the first of a series of papers devoted to the topology of symplectic Calabi-Yau $4$-manifolds endowed with certain symplectic finite group actions. We completely determine the fixed-point set structure of a finite cyclic action on a symplectic Calabi-Yau $4$-manifold with $b_1>0$. As an outcome of this fixed-point set analysis, the $4$-manifold is shown to be a $T^2$-bundle over $T^2$ in some circumstances,
e.g., in the case where the group action is an involution which fixes a $2$-dimensional surface in the $4$-manifold. Our project on symplectic Calabi-Yau $4$-manifolds is based on an analysis of the existence and classification of disjoint embeddings of certain configurations of symplectic surfaces in a rational $4$-manifold. This paper lays the ground work for such an analysis at the homological level. Some other result which is of independent 
interest, concerning the maximal number of disjointly embedded symplectic $(-2)$-spheres in a rational $4$-manifold, is also obtained. 
\end{abstract}

\section{Introduction and the main results}

In this paper, we study symplectic finite group actions on symplectic Calabi-Yau $4$-manifolds with
$b_1>0$. (Recall that a symplectic $4$-manifold $M$ is called {\it Calabi-Yau} if $K_M$ is trivial.)
Our starting point is the recent construction in \cite{C3} (see also \cite{MR}), where to each symplectic $4$-manifold $M$
equipped with a finite symplectic $G$-action, we associate a symplectic $4$-manifold, denoted by $M_G$, and an embedding 
$D\rightarrow M_G$ of a disjoint union of configurations of symplectic surfaces. Roughly speaking, the $4$-manifold $M_G$ 
is constructed by first de-singularizing the
symplectic structure of the quotient orbifold $M/G$ along the $2$-dimensional singular strata, making
the underlying space $|M/G|$ into a symplectic $4$-orbifold with only isolated singularities. Then $M_G$ is taken to be the minimal symplectic resolution of the symplectic $4$-orbifold $|M/G|$, and $D$ is simply the pre-image of the singular set of the original 
orbifold $M/G$ in $M_G$. 
See \cite{C3} for more details. The idea is to recover the $G$-action on $M$, in particular the $4$-manifold $M$, by analyzing the
embedding $D\rightarrow M_G$. 
With this understood, it was shown (cf. \cite{C3}, Theorem 1.9) that if $M$ is Calabi-Yau, then $M_G$ is either of torsion canonical class, or is a rational $4$-manifold, or an irrational ruled $4$-manifold over $T^2$. Moreover, $M_G$ is of torsion canonical class if and only if the quotient orbifold $M/G$ has at most isolated Du Val singularities (cf. \cite{C3}, Lemma 4.1).  
Our basic observation is that, when $M_G$ is rational or ruled, it is possible to effectively recover the original $4$-manifold $M$ 
by analyzing the embedding $D\rightarrow M_G$. Moreover, as it turns out, one can also derive new constraints on the fixed-point set structure of the $G$-action from non-existence results for the embedding $D\rightarrow M_G$.

As an initial step toward understanding the topology of symplectic Calabi-Yau $4$-manifolds endowed with a 
symplectic finite group action, we consider first the case where the $4$-manifold $M$ has $b_1>0$, and determine 
the fixed-point set structure of a finite cyclic action on $M$. As a result of our analysis, we obtain the following

\begin{theorem}
Suppose $M$ is a symplectic Calabi-Yau $4$-manifold with $b_1>0$ which is endowed with
a finite symplectic $G$-action. If the resolution $M_G$ is irrational ruled, or $M_G$ is rational and
$G=\Z_2$, then $M$ must be diffeomorphic to a $T^2$-bundle over $T^2$ with homologically essential fibers.
\end{theorem}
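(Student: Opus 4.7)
The plan is to treat the two cases separately, combining in each the rigid homological constraints that $K_M = 0$ imposes on the embedded configuration $D \subset M_G$ with the topology of rational or ruled surfaces, and finishing by exhibiting an explicit torus fibration on $M$. A preliminary observation used throughout is that $b_1(M_G) = \dim_\Q H_1(M;\Q)^G$, since neither the desingularization along $2$-dimensional strata nor the resolution of isolated quotient singularities alters $b_1$; in combination with the known facts that a symplectic CY $4$-manifold with $b_1 > 0$ satisfies $\chi = \sigma = 0$ and $b_1 \in \{2,3,4\}$, this already constrains the arithmetic of the situation considerably.

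In the irrational ruled case, $M_G$ is a (possibly blown-up) symplectic $S^2$-bundle $\pi\colon M_G \to T^2$, and $b_1(M_G) = 2$ forces $b_1(M) \geq 2$. Since any sphere in $M_G$ must project to a point in $T^2$ (as $\pi_2(T^2) = 0$), the resolution divisor of the isolated quotient singularities is $\pi$-contracted, so $\pi$ descends to a map $|M/G| \to T^2$. Lifting through the branched cover $M \to |M/G|$ produces $f\colon M \to T^2$. Combining $\chi(M) = 0$ (forcing generic fibers to have $\chi = 0$), symplectic positivity, and connectedness of fibers yields that the generic fiber of $f$ is a symplectic torus; the classification of symplectic $T^2$-fibrations over $T^2$ then identifies $M$ as a $T^2$-bundle with homologically essential fibers.

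In the rational case with $G = \Z_2$, the Hurwitz formula applied to the double cover $\widetilde M \to M_G$, where $\widetilde M$ is $M$ blown up at its isolated fixed points, gives $p^* K_{M_G} = -\Sigma$ (here $\Sigma$ is the $2$-dimensional fixed set); pushing forward yields the rigid class identity $[\Sigma_{M_G}] = -2K_{M_G}$ in $H_2(M_G)$. Combining this with the equivariant Euler-characteristic identity $\chi(M_G) = \tfrac12(\chi(\Sigma) + 3k)$ for $k$ isolated fixed points, and with $\chi(\Sigma_{M_G}) = -2K_{M_G}^2$ from adjunction on components, collapses to the model-independent count $k = 8$. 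One then argues that any disjoint embedding of the configuration $\Sigma_{M_G} \cup \bigsqcup_{i=1}^8 C_i$ with $[\Sigma_{M_G}] = -2K_{M_G}$ in a rational $4$-manifold forces $\Sigma_{M_G}$ to consist of symplectic tori aligned with the fibers of a genus-one pencil $M_G \dashrightarrow \C\P^1$; pulling this pencil back through $M \to M_G$ gives the desired $T^2$-bundle structure on $M$ over $T^2$.

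The main obstacle is this last classification step in the rational case, where one must show that the rigid configuration $[\Sigma_{M_G}] = -2K_{M_G}$ together with $8$ disjoint symplectic $(-2)$-spheres in a rational $4$-manifold is realized only in a manner compatible with a genus-one fibration. This is precisely the homological classification problem of disjoint embeddings of symplectic surface configurations flagged in the introduction, and it is where the forthcoming bound on the maximal number of disjointly embedded symplectic $(-2)$-spheres in a rational $4$-manifold enters the picture.
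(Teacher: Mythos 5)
Your outline follows the same broad strategy as the paper (branched covers, the class identity $\sum_i B_i=-2c_1(K_{M_G})$, the count of $8$ isolated fixed points, reduction to a configuration problem), but in both cases the step that actually produces a \emph{bundle} is missing. In the irrational ruled case, knowing that the \emph{generic} fiber of $f\colon M\to T^2$ is a torus is not enough: if the branch locus $\cup_i B_i$ were tangent to some fiber of the $\s^2$-bundle, the preimage of that fiber in $M$ would be a singular fiber, and $M$ would be a torus fibration rather than a $T^2$-bundle. The paper's resolution of this is Lemma 5.2 (a Carleman-similarity-principle argument, needed because the fibration is only $J$-holomorphic for a $J$ that is integrable merely near the $B_i$), which shows $\pi|_{B_i}\colon B_i\to T^2$ is a branched cover ramified exactly at tangencies; Riemann--Hurwitz for a torus covering a torus then forces no ramification, i.e.\ transversality with \emph{every} fiber. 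You also do not rule out isolated fixed points or pin down $b_2^-(M/G)=1$ (Lemmas 2.2, 2.6(i), 2.8(3)), both of which are needed before the descent-and-lift argument even makes sense.

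In the rational $\Z_2$ case, the paragraph beginning ``one then argues'' is where essentially all of the paper's work lives, and no mechanism is offered for it. Concretely: one must first determine $N$ in $M_G=\C\P^2\#N\overline{\C\P^2}$ ($N\in\{8,9,10\}$ from $\sum_iY_i^2=2(1-b_2^-(M/G))$); $N=8$ is killed by Theorem 1.4, which rests on the Ruberman--Starkston non-realizability of the Fano plane, and $N=10$ by passing to the double branched cover (a homology $K3$) and invoking Ruberman's bound of $16$ disjoint $(-2)$-spheres. For $N=9$ one needs the delicate Lemma 5.1 --- a classification of the homology classes of the $8$ disjoint $(-2)$-spheres, valid only after deforming $\omega$ via Lemma 4.1 to satisfy specific area constraints so that Lemma 3.6 applies --- followed by an orbifold pseudo-holomorphic moduli argument (Lemma 5.3) to manufacture the square-zero sphere fibration. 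Finally, your last sentence is not correct as stated: the pencil lives over $\C\P^1$, there is no map $M\to M_G$, and what one actually obtains on $M$ is a $T^2$-fibration over an orbifold sphere with four multiplicity-$2$ fibers; identifying $M$ as a $T^2$-bundle over $T^2$ then requires the Friedman--Morgan classification of such fibrations (hyperelliptic vs.\ secondary Kodaira, the latter excluded by $b_1(M)\neq 1$).
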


We remark that in Theorem 1.1, $M$ is in fact diffeomorphic to a hyperelliptic surface in the case of
$G=\Z_2$ and $M_G$ is rational. On the other hand, we note that in the case of $G=\Z_2$,
$M_G$ is rational or ruled if and only if the fixed-point set $M^G$ contains a $2$-dimensional component. We state this special case in the following

\vspace{2mm}

\noindent{\bf Corollary:}\hspace{2mm} 
{\it Let $M$ be a symplectic Calabi-Yau $4$-manifold with $b_1>0$,
which is equipped with a symplectic involution whose fixed-point set contains a $2$-dimensional component. Then $M$ must be diffeomorphic to a $T^2$-bundle over $T^2$ with homologically essential fibers.}

\vspace{2mm}

R. Inanc Baykur \cite{Bay} informed us that he has examples of symplectic Calabi-Yau $4$-manifolds with $b_1=2$ and $4$, which are
constructed using symplectic Lefschetz pencils, and which come with a natural symplectic involution whose fixed-point set 
contains a $2$-dimensional component. Our theorem shows that these symplectic Calabi-Yau $4$-manifolds all have the standard
smooth structure. 

To put Theorem 1.1 in a perspective, recall that symplectic $4$-manifolds can be classified into
four classes according to their symplectic Kodaira dimension $\kappa^s$, which is a smooth invariant and takes values in
$\{-\infty,0,1,2\}$. (The classification is analogous to the classification in complex surface theory, but the relevant definitions are given in completely different ways. For K\"{a}hler surfaces, the two classifications coincide. See \cite{Li}.) Furthermore, as a culmination of the seminal works of 
Gromov, McDuff, and Taubes \cite{G0,McD,T}, the case of $\kappa^s=-\infty$ is completely determined: these symplectic $4$-manifolds are precisely the rational or ruled surfaces. 

Much effort has also been devoted to the next case, i.e., $\kappa^s=0$. First, based on 
Taubes' theory \cite{T}, T.-J. Li (cf. \cite{Li}) showed that a minimal symplectic $4$-manifold $M$ 
with $\kappa^s=0$ is either Calabi-Yau (i.e., $K_M$ is trivial), or a double cover of $M$ is 
Calabi-Yau. Note that a symplectic Calabi-Yau $4$-manifold is spin. Using the Bauer-Furuta theory of spin $4$-manifolds, together with Taubes' theorem \cite{T} and the classical Rochlin 
Theorem, the following homological constraints were obtained, see \cite{ Bauer, Li, Li1, MSz}:
\begin{itemize}
\item A symplectic Calabi-Yau $4$-manifold $M$ either has the integral homology and intersection form of a $K3$ surface, or has the rational homology and intersection form of a $T^2$-bundle 
over $T^2$; in particular, $0\leq b_1(M)\leq 4$, and if $b_1(M)>0$, $M$ has zero Euler
number and signature. (If $M$ is non-Calabi-Yau but a double cover of $M$ is Calabi-Yau, 
then $M$ is an integral homology Enriques surface.)
\item In addition, for the case of $b_1(M)=4$, the cohomology ring $H^\ast(M;\R)$ is isomorphic to
$H^\ast(T^4;\R)$ (cf. \cite{Sr}).
\end{itemize}

The above homological constraints are in sharp contrast to the flexibility known in higher dimensional symplectic Calabi-Yau manifolds, see e.g., \cite{FP}. Using a covering trick, one can also obtain 
interesting constraints on the fundamental group (as well as homotopy type in the case of $b_1>0$) 
of a symplectic Calabi-Yau $4$-manifold (cf. \cite{FV}), e.g., in the case of $b_1=0$, the fundamental group has no
subgroup of finite index.

As for examples, besides $K3$ surfaces, all orientable $T^2$-bundles over $T^2$ are symplectic 
Calabi-Yau $4$-manifolds (cf. \cite{G,Li}). (A topological classification of $T^2$-bundles over $T^2$ is
given in \cite{SF}.) We remark that not all $T^2$-bundles over $T^2$ admit a complex structure, and 
not all $T^2$-bundles over $T^2$ have homologically essential fibers (cf. \cite{G}). If a complex surface
is a symplectic Calabi-Yau $4$-manifold, then it is either a $K3$ surface, a complex torus, a primary Kodaira surface, or a hyperelliptic surface. With this understood, the following has been an open question (cf. \cite{Don, Li}):

\vspace{1.5mm}

{\it Does there exist a symplectic Calabi-Yau $4$-manifold other than the known examples, i.e., 
a $T^2$-bundle over $T^2$ or a $K3$ surface?}

\vspace{1.5mm}

We remark that the basic smooth invariants in $4$-manifold theory (e.g., the Seiberg-Witten invariants) are ineffective in distinguishing homeomorphic symplectic Calabi-Yau $4$-manifolds. As a result, 
one hopes to
construct new examples which have different topological invariants such as the fundamental group.
On the other hand, concerning characterizing the diffeomorphism types of symplectic Calabi-Yau $4$-manifolds, Theorem 1.1 is the first result 
of such kind (under a finite symmetry condition). Finally, for connections of this question with hypersymplectic structures and Donaldson's conjecture, we refer the readers to the recent 
article \cite{FY}. 

\vspace{1.5mm}

With the preceding understood, the idea of our project is to specialize in symplectic Calabi-Yau $4$-manifolds 
$M$ which admits a $G$-action such that $M_G$ is rational or ruled, and through $D\rightarrow M_G$, to gain insight 
about the topology of $M$. Note that with Theorem 1.1, the case where $M_G$ is irrational ruled is settled.

Now we state the results on the fixed-point set structure of a finite cyclic action on symplectic Calabi-Yau $4$-manifolds with $b_1>0$. 
We shall separate the prime order and non-prime order cases.

\begin{theorem}
Let $G$ be a cyclic group of prime order, and let $M$ be a symplectic Calabi-Yau $4$-manifold with $b_1>0$, equipped with a non-free symplectic $G$-action. Then the fixed-point set structure of the $G$-action and the symplectic resolution $M_G$ must belong to one of the following cases:
\begin{itemize}
\item [{(1)}] Suppose $M_G$ has torsion canonical class. Then either $G=\Z_2$ or $G=\Z_3$.
In the former case, $G$ either has $8$ isolated fixed points, with $b_1(M)<4$ and $M_G$ being
an integral homology Enriques surface, or has $16$ isolated fixed points, with $b_1(M)=4$ and $M_G$ being an integral homology $K3$ surface. In the latter case where $G=\Z_3$, the fixed point set consists of $9$ isolated points of type $(1,2)$, 
with $b_1(M)=4$ and $M_G$ being an integral homology $K3$ surface. 
\item [{(2)}] Suppose $M_G$ is irrational ruled. Then $G=\Z_2$ or $\Z_3$, the fixed point
set consists of only tori with self-intersection zero, and $M_G$ is a $\s^2$-bundle over $T^2$. 
\item [{(3)}] Suppose $M_G$ is rational. Then $G=\Z_2$, $\Z_3$ or $\Z_5$. The fixed-point set structure and $M_G$ are listed below:
\begin{itemize}
\item [{(i)}] If $G=\Z_2$, the fixed point set consists of one or two torus 
of self-intersection zero and $8$ isolated points, and $M_G=\C\P^2\# 9\overline{\C\P^2}$, $b_1(M)=2$.
\item [{(ii)}] If $G=\Z_3$, there are three possibilities, where $b_1(M)=2$ in $\left(a\right)$,
$\left(b\right)$, and $b_1(M)=4$ in $\left(c\right)$:  
\begin{itemize}
\item [{(a)}] the fixed point set consists of $6$ isolated points, where exactly $3$ of the fixed points 
are of type $(1,1)$, and $M_G=\C\P^2\# 10\overline{\C\P^2}$; 
\item [{(b)}] the fixed point set consists of one torus with self-intersection zero and $6$ isolated points, where exactly $3$ of the fixed points are of type $(1,1)$, and $M_G=\C\P^2\# 10\overline{\C\P^2}$;
\item [{(c)}] the fixed point set consists of $9$ isolated points of type $(1,1)$, and $M_G=\C\P^2\# 12\overline{\C\P^2}$.
\end{itemize}
\item [{(iii)}] If $G=\Z_5$, the fixed point set consists of $5$ isolated points of type $(1,2)$, and
$M_G=\C\P^2\# 11\overline{\C\P^2}$, $b_1(M)=4$.
\end{itemize}
\end{itemize}
\end{theorem}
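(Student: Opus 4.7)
The plan is to organize the proof around the trichotomy for $M_G$ furnished by Theorem~1.9 of \cite{C3}: $M_G$ has torsion canonical class, is irrational ruled, or is rational. In each case I would combine (a) the Lefschetz fixed-point theorem and the $G$-signature formula applied to the $\Z_p$-action on $M$, using $\chi(M)=0$ and $\operatorname{sign}(M)=0$ (consequences of $b_1(M)>0$), with (b) an Euler-characteristic identity expressing $\chi(M_G)$ in terms of the fixed-point count of the $G$-action on $M$ and the combinatorial data of the resolution, and (c) the rigidity of $M_G$ within its Kodaira class. The local input is that around each isolated fixed point the linearized action has rotation weights $(a,b)\in((\Z/p)^{\times})^2$, while every $2$-dimensional fixed component is a symplectic surface whose genus is controlled by $K_M=0$ via adjunction.

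For Case (1), Lemma~4.1 of \cite{C3} forces every fixed point to be isolated with weights $(a,-a)\pmod p$, contributing an $A_{p-1}$-singularity to $M/G$; consequently $M_G$ is itself symplectic of torsion canonical class, so $\chi(M_G)\in\{0,12,24\}$ (the values $12$ and $24$ corresponding to the integral-homology Enriques and $K3$ alternatives). The identity
\[
\chi(M_G)\;=\;\frac{-k}{p}+k+k(p-1)\;=\;\frac{k(p^2-1)}{p},
\]
with $k=|M^G|$, rules out the free case and leaves $(p,k)\in\{(2,8),(2,16),(3,9),(5,5)\}$; the $G$-signature formula at weights $(a,-a)$ eliminates $(5,5)$, and the remaining candidates distribute over the Enriques-versus-$K3$ alternative as claimed in the theorem, the $b_1(M)$-dichotomy being recovered from the $G$-invariant part of $H^1(M;\Q)$.

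For Case (2), the irrational-ruled hypothesis together with Theorem~1.9 of \cite{C3} gives a ruling of $M_G$ over $T^2$; a minimal $S^2$-bundle over $T^2$ contains no disjoint symplectic $(-2)$-sphere configurations, which in conjunction with the minimality of the symplectic resolution rules out all isolated fixed points of $G$ on $M$ (each would contribute a Hirzebruch--Jung chain to $D\subset M_G$). The remaining $2$-dimensional fixed components are then tori of self-intersection zero by adjunction with $K_M=0$, and the holomorphic Lefschetz formula for the trivial canonical bundle restricts $p$ to $\{2,3\}$.

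Case (3), $M_G$ rational, is the most delicate and is where I expect the main technical effort. My approach is to combine Noether's identity $c_1^2(M_G)+\chi(M_G)=12$ with the formulas for $c_1^2(M_G)$ and $\chi(M_G)$ in terms of the fixed-point data (both isolated points and fixed tori), the $G$-signature identity, and the constraint $b_1(M)\in\{2,4\}$, to restrict $p$ to $\{2,3,5\}$ and reduce the problem to a short arithmetic list of candidate fixed-point configurations. The hard part is to exclude the remaining spurious solutions: for this I would invoke the disjoint symplectic surface configuration $D\hookrightarrow M_G$ produced by \cite{C3}, together with the bound on the number of disjoint symplectic $(-2)$-spheres in a rational $4$-manifold advertised in the abstract, to show that each surviving candidate is realized by an honest disjoint embedding of $D$ into $\C\P^2\#N\overline{\C\P^2}$ for a specific $N$; Noether's formula then identifies $M_G$ in each sub-case.
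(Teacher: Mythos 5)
Your Case (1) outline matches the paper's argument (the identity $\chi(M_G)=\#M^G\cdot(p^2-1)/p$ together with $\chi(M_G)\in\{12,24\}$ and the weak $G$-signature theorem to kill $(p,k)=(5,5)$ is exactly what is done), but Cases (2) and (3) each contain a genuine gap. In Case (2), your exclusion of isolated fixed points is circular: you argue that a \emph{minimal} $\s^2$-bundle over $T^2$ contains no $(-2)$-sphere configurations, but minimality of $M_G$ is part of the conclusion, not a hypothesis. A priori $M_G$ could be a non-minimal ruled surface over $T^2$ (i.e.\ an $\s^2$-bundle blown up at several points), and such a manifold certainly can contain disjoint symplectic $(-2)$-spheres, so the Hirzebruch--Jung chains coming from isolated fixed points are not obstructed by your argument. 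The paper instead runs a purely numerical argument: the Lefschetz formula forces $z=\sum_iY_i^2$, and if $z>0$ one derives $Sign(g,M)>0$ from the $G$-signature theorem, contradicting $Sign(g,M)=0$. Also, the ``holomorphic Lefschetz formula'' is not available for a merely symplectic action; the restriction to $p\in\{2,3\}$ actually comes from $c_1(K_{M_G})=\frac{1-p}{p}\sum_iB_i$ paired with a fiber class $F$: the divisibility $(\sum_iB_i)\cdot F=\frac{2p}{p-1}\in\Z$ forces $p=2$ or $3$.

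In Case (3), the toolkit you list (Lefschetz, $G$-signature, Noether, plus the $(-2)$-sphere bound and the embedding $D\hookrightarrow M_G$) is not strong enough to pin down the isolated fixed-point types. For $G=\Z_3$ with $b_1=2$, Noether's identity is a consequence of the Lefschetz and signature equations already used (one checks it reduces to $2y+3\sum_iY_i^2=6$, which is the same relation), so it eliminates nothing; the spurious solution ($8$ points of type $(1,1)$, no points of type $(1,2)$) is killed in the paper only by the $G$-index theorem for the Dirac operator combined with Fukumoto--Furuta's vanishing of the orbifold index. For $G=\Z_5$ the paper additionally needs the mod $p$ vanishing theorem for Seiberg--Witten invariants together with Taubes' nonvanishing to exclude two of the surviving weight distributions. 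Finally, the exclusion of the residual $2$-dimensional fixed components in the rational cases is not a consequence of the $(-2)$-sphere count of Theorem 1.4 alone: it requires the homological non-existence results for configurations of $(-3)$-spheres and tori representing $c_1(K_{M_G})$ (Propositions 4.3--4.5), proved via the reduced-basis analysis of Section 3 under auxiliary area conditions. Without these inputs your ``short arithmetic list'' still contains configurations that cannot be ruled out.
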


\begin{theorem}
Let $G$ be a cyclic group of non-prime order, and let $M$ be a symplectic Calabi-Yau $4$-manifold with $b_1>0$, equipped with a symplectic $G$-action such that no subgroups of $G$ act freely on $M$. Suppose $M_G$ is rational or ruled, but for any prime order subgroup $H$, $M_H$ has torsion canonical class. Then $G=\Z_4$ or $\Z_8$. Moreover,
\begin{itemize}
\item [{(i)}] If $G=\Z_4$, there are two possibilities:
\begin{itemize}
\item [{(a)}] the $G$-action has $4$ isolated fixed points, where exactly $2$ of the fixed points are of
type $(1,1)$, and $4$ isolated points of isotropy of order $2$, with 
$M_G=\C\P^2\# 11\overline{\C\P^2}$; in this case, $b_1(M)=2$,
\item [{(b)}] the $G$-action has $4$ isolated fixed points, all of type $(1,1)$, and $12$ isolated points of isotropy of order $2$, with $M_G=\C\P^2\# 13\overline{\C\P^2}$; in this case, $b_1(M)=4$.
\end{itemize}
\item [{(ii)}] If $G=\Z_8$, there are two possibilities, where in both cases, $b_1(M)=4$:
\begin{itemize}
\item [{(a)}] the $G$-action has $2$ isolated fixed points, all of type $(1,3)$, and $2$ isolated points of isotropy of order $4$ of type $(1,3)$, and $12$ isolated points of isotropy of order $2$, with $M_G=\C\P^2\# 11\overline{\C\P^2}$; 
\item [{(b)}] the $G$-action has $2$ isolated fixed points, all of type $(1,5)$, and $2$ isolated points of isotropy of order $4$ of type $(1,1)$, and $12$ isolated points of isotropy of order $2$, with $M_G=\C\P^2\# 11\overline{\C\P^2}$.
\end{itemize}
\end{itemize}
\end{theorem}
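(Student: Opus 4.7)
The plan is to combine Theorem 1.2 applied to the prime-order subgroups of $G$ with equivariant index-theoretic and Euler characteristic constraints on the full $G$-action, in order to first restrict $|G|$ to the surviving candidates $\Z_4$ and $\Z_8$ and then to enumerate their fixed-point configurations.

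First, for any prime $p \mid |G|$, the subgroup $\Z_p \subset G$ satisfies the hypothesis of Theorem 1.2(1), so $p \in \{2,3\}$; moreover, if $3 \mid |G|$ then $b_1(M)=4$, and in all cases $\sigma(M)=0$ with $b_2(M) \in \{2,6\}$. Next, I would rule out the mixed-prime case $6 \mid |G|$, the pure 3-case $|G|=9$, and $|G|=2^a$ with $a \geq 4$. For an order-6 element $g \in G$, compatibility of the $\Z_6$-rotation weights at each $g$-fixed point with the $(1,2)$-weights of $g^2$ (a $\Z_3$-fixed point) and the $(1,1)$-weights of $g^3$ (a $\Z_2$-fixed point) forces each $g$-fixed point to have type $(1,5)$. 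The $G$-signature formula then gives $\sigma(g,M) = \cot^2(\pi/6) \cdot N = 3N$ where $N$ is the number of $g$-fixed points, while $|\sigma(g,M)| \leq b_2(M) = 6$; combined with the constraint that $\langle g^2 \rangle$ acts freely on $M^{g^3} \setminus M^g$ (a set of cardinality $16-N \equiv 0 \pmod{3}$) and the Lefschetz fixed-point formula applied to $g$, $g^2$, $g^3$, a contradiction emerges. An analogous weight-compatibility analysis on $M^{\Z_3}$ for a generator of $\Z_9$ rules out $|G|=9$. For $|G|=2^a$ with $a \geq 4$, the $G$-signature bound $|\sigma(g^j,M)| \leq 6$ applied to each non-identity power $g^j$, combined with the Lefschetz fixed-point count on $M$, is incompatible with the structure forced by $M_G$ being rational.

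Finally, for $G=\Z_4$ and $\Z_8$, the enumeration proceeds by: (i) counting fixed points of each non-trivial subgroup using the Lefschetz fixed-point and $G$-signature formulas applied to every non-identity power of a generator $g$, along with $\chi(M)=\sigma(M)=0$; (ii) determining the Euler characteristic and signature of $M_G$ via equivariant Euler characteristic counting on $M$ together with bookkeeping for the exceptional divisors of the resolution, pinning down $k$ in $M_G=\C\P^2 \# k\overline{\C\P^2}$; (iii) enforcing that the local fixed-point weights $(a, |G|-a)$ dictated by the symplectic Calabi-Yau condition are mutually compatible through the subgroup tower. The main obstacle will be the $\Z_8$ classification, particularly distinguishing cases (a) and (b) in Theorem 1.3(ii), which yield identical global Euler and signature data on $M_G=\C\P^2 \# 11\overline{\C\P^2}$ but differ in the local weight at the order-4 fixed points. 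Separating them requires finely tracking how $\Z_8$-weights restrict through the tower $\Z_2 \subset \Z_4 \subset \Z_8$ and matching them to the allowable configurations of exceptional divisors in $M_G$.
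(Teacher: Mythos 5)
Your overall architecture (restrict the prime subgroups via the earlier fixed-point analysis, bound $|G|$, then enumerate with Lefschetz and $G$-signature data) matches the paper's, but two of your key steps would fail as written. First, your elimination of $6\mid |G|$ does not close: your weight analysis correctly forces every fixed point of an order-$6$ element to be of type $(1,5)$, but the numerical contradiction you then look for is not there. Taking $g$ acting on $H^1$ by rotations of angles $\pm\pi/3$, one finds $L(g,M)=1$, $Sign(g,M)=3$, $L(g^2,M)=9$, $L(g^3,M)=16$, and $N=1$ is compatible with $3N\leq 6$, with $16-N\equiv 0\pmod 3$, and with all Lefschetz counts. The actual contradiction (the paper's, one line) is structural: once every point of nontrivial isotropy is Du Val (type $(1,1)$ for $\Z_2$, $(1,2)$ for $\Z_3$, $(1,5)$ for $\Z_6$), the orbifold $M/G$ has only Du Val singularities, so $M_G$ has torsion canonical class, contradicting the hypothesis that it is rational or ruled. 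Second, your step (iii) rests on a false premise: the symplectic Calabi-Yau condition does \emph{not} dictate weights $(a,|G|-a)$ at fixed points --- if it did, every singularity of $M/G$ would be Du Val and the theorem would be vacuous. The statement you are proving contains non--Du Val points (type $(1,1)$ for $\Z_4$, types $(1,3)$ and $(1,5)$ for $\Z_8$), and the mechanism that pins down how many fixed points are Du Val versus not is the resolution formula for $c_1(K_{M_G})$ combined with the rationality constraint $c_1(K_{M_G})^2=9-b_2^{-}(M_G)$; your Euler-characteristic bookkeeping alone does not capture this (the equivariant signature can substitute in some subcases, but you would need to say so).

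The other missing ingredient is the action of $G$ on $H^1(M;\Z)$, which is the paper's central tool (Lemma 2.7). Since each prime-order subgroup $H$ has $M_H$ of torsion canonical class, $b_1(M/H)=0$, so each such $H$ acts on the rank-$b_1$ lattice $H^1(M;\Z)/\mathrm{Tor}$ without nonzero fixed vectors; hence $G$ acts faithfully there. This is how the paper excludes $|G|=9$ and $|G|=16$ when $b_1=4$ (the relevant cyclotomic degrees $\phi(9)=6$ and $\phi(16)=8$ exceed $4$, and the trace-integrality conditions of Lemma 2.7(1) handle the rest), and excludes $|G|=8$ when $b_1=2$ (no order-$8$ element of $SL(2,\Z)$). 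It is also what lets one actually \emph{evaluate} $L(g,M)=4(1-\cos\theta_1)(1-\cos\theta_2)$ and $Sign(g,M)=-4\sin\theta_1\sin\theta_2$ for $b_1=4$; without it, the Lefschetz and $G$-signature formulas you invoke only relate fixed-point data to unknown traces, and your proposed signature-bound argument for $|G|=2^a$, $a\geq 4$, exhibits no contradiction (e.g.\ type $(1,7)$ fixed points of an order-$16$ element each contribute only $-1$ to $Sign(g,M)$, so the bound $|Sign(g,M)|\leq 6$ is not violated). You should also note that $b_1(M)=3$ must be excluded for non-prime $|G|$, which again follows from the lattice action on $\Z^3$.
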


\noindent{\bf Remarks:} (1) Let $n$ be the order of $G$ (prime or non-prime). An isolated fixed point $q$ is said to be of type $(1,b)$ (where $0<b<n$) if there is a generator $g$ of $G$ such that
the induced action of $g$ on $T_q M$ has eigenvalues $\exp(2\pi i/n)$ and 
$\exp(2\pi i b/n)$ (with respect to a complex structure on $T_q M$ compatible with the symplectic structure on $M$). More generally, an isolated point $q$ is of isotropy of order $m$ of
type $(1,b)$ (where $0<b<m$) if there is a generator $g$ of the isotropy subgroup $G_q$ at $q$, such that $m=|G_q|$ and the induced action of $g$ on $T_q M$ has eigenvalues $\exp(2\pi i/m)$ and 
$\exp(2\pi i b/m)$. Note that if $G=\Z_2$ or the isotropy order $m=2$, $q$ is always of type $(1,1)$. 

(2) In light of Theorem 1.1, it remains to determine $M$ when $M_G$ is
rational and $G\neq \Z_2$. Examining the cases in Theorems 1.2 and 1.3 where $M_G$ is rational,
we see that either $b_1(M)=2$ or $b_1(M)=4$. In particular, if $M$ admits a complex structure, then
$M$ must be either a hyperelliptic surface or a complex torus. 

(3) We point out that for all the cases where $M_G$ is rational, the fixed-point set structures can be realized by holomorphic actions (either on a hyperelliptic surface or a complex torus). Explicit examples realizing the fixed-point set structures listed in Theorem 1.2(3)(iii) and Theorem 1.3(ii) 
(where $G=\Z_5$ or $\Z_8$) can be found in Fujiki \cite{Fu}, Table 6 (examples for
the remaining cases can be easily constructed by hand).

\vspace{1.5mm}

For a large part, the proofs of Theorems 1.2 and 1.3 employ the standard techniques in group actions, i.e., the Lefschetz fixed point theorem and the $G$-signature theorem, coupled with the standard results in symplectic topology of rational and ruled surfaces and the topological constraints of minimal
symplectic $4$-manifolds with $\kappa^s=0$ through the use of $M_G$. Some of the cases also
require the use of $G$-index theorem for Dirac operators and Seiberg-Witten theory. These 
traditional methods are quite efficient in determining the fixed-point set structure for the isolated 
fixed points, however, for the $2$-dimensional fixed components, these methods have their natural limitations. The reason is that the $2$-dimensional fixed components (particularly the tori of
self-intersection zero) often do not make any contribution in the various $G$-index theorem calculations, hence cannot be detected by these methods. (See \cite{CK2}, Section 3, for a summary of these traditional methods.)

With this understood, in order to obtain further constraints on the $2$-dimensional fixed
components (as well as for a proof of Theorem 1.1), we shall analyze the embedding of $D$ in $M_G$. It turns out that the main difficulty occurs when $M_G$ is rational.

To explain this aspect of the story, we let $(X,\omega)$ be a symplectic rational $4$-manifold, where $X=\C\P^2\# N\overline{\C\P^2}$. We fix a reduced basis $H,E_1,E_2,\cdots,E_N$ of $(X,\omega)$ (a more detailed discussion on reduced bases will be given in Section 3). Then for any symplectic surface in $X$, its homology class $A$ can be expressed in terms of the reduced basis $H,E_1,E_2,\cdots,E_N$:
$$
A=aH-\sum_{i=1}^N b_i E_i, \mbox{ where } a\in \Z, b_i\in\Z.
$$
The numbers $a,b_i$ are called the $a$-coefficient and $b_i$-coefficients of $A$. By the adjunction formula, the numbers 
$a$ and $b_i$ are bound by a set of equations involving the self-intersection number $A^2$ and the genus of the surface. 
It follows easily from these equations that for each fixed
value of the $a$-coefficient, there are only finitely many possible values for the $b_i$-coefficients.
However, for each given symplectic surface, there is no a priori upper bound for the $a$-coefficient of its class $A$, although one can show that there is a lower bound for the $a$-coefficient (cf. Lemmas 3.3 and 3.4). 

Now suppose $D$ is a disjoint union of configurations of symplectic surfaces embedded in $X$,
where its components are denoted by $F_k$. The first step in approaching the problem 
of existence and classification of $D\rightarrow X$ is to look at the classes of the components $F_k$ in a given reduced 
basis. This process often involves a case-by-case examination, hence it is important that for each component $F_k$, there are only finitely many possible homological expressions. Such a finiteness can be achieved by bounding the values of the $a$-coefficient of each $F_k$, as the self-intersection number $F_k^2$ and the genus of $F_k$ are all pre-determined by $D\rightarrow X$.

In the present situation, $c_1(K_X)$ is supported in $D$. More precisely, 
$$
c_1(K_X)=\sum_k c_k F_k, \mbox{ where } c_k\in\Q \mbox{ and } c_k\leq 0.
$$
As $c_1(K_X)=-3H+\sum_{i=1}^N E_i$, the $a$-coefficient of $c_1(K_X)$ equals $-3$. It follows
easily that for those components $F_k$ with $c_k\neq 0$, the $a$-coefficient is bounded from above. However, if $F_k$ is a $(-2)$-sphere, which is either disjoint from the other components, or appears in a configuration of only $(-2)$-spheres, then $c_k=0$ and there is no bearing on the $a$-coefficient of $F_k$ from $c_1(K_X)$.

It turns out that we can remedy this issue by imposing an auxiliary area condition. More
concretely, let $A$ be the class of a symplectic $(-\alpha)$-sphere where $\alpha=2$ or $3$.
If the area condition $\omega(A)<-c_1(K_X)\cdot [\omega]$ is satisfied, then $A$ must take the following expression 
in a given reduced basis: 
$$
A=aH-(a-1)E_{j_1}-E_{j_2}-\cdots-E_{j_{2a+\alpha}}.
$$
In particular, the $a$-coefficient of $A$ has an upper bound in terms of $N$:
$$
a\leq \frac{1}{2}(N-\alpha)
$$
(See Lemma 3.6.) On the other hand, for the problem of existence and topological classification of embeddings of $D$ in $X$, one can always freely impose such an area condition by working with a different symplectic structure (cf. Lemma 4.1). Thus in principle, at least for the problem we have at hand, we have developed the necessary tools in this paper to classify the possible embeddings 
$D\rightarrow M_G$ at the homological level. In particular, by choosing an appropriate symplectic structure $\omega$ on $M_G$, there are only finitely many possible homological expressions for the components of $D$ with respect to any given reduced basis of $(M_G,\omega)$. 
In forthcoming papers, we shall further develop techniques in order to understand the possible embeddings $D\rightarrow M_G$ beyond the homological level. (See \cite{C} for more discussions.)


In the course of the proof of Theorem 1.1, we also discover the following result which is of independent interest.

\begin{theorem}
Let $X=\C\P^2\# N\overline{\C\P^2}$ where $N=7$, $8$ or $9$. There exist no $N$ disjointly embedded symplectic $(-2)$-spheres 
in $X$.
\end{theorem}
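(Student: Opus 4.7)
I argue by contradiction. Suppose $C_1, \ldots, C_N$ are disjointly embedded symplectic $(-2)$-spheres in $X = \C\P^2 \# N\overline{\C\P^2}$ with classes $A_i = [C_i]$. Then $A_i\cdot A_j = -2\delta_{ij}$, and since each $C_i$ is a sphere the adjunction formula forces $K_X\cdot A_i = 0$.

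The case $N = 9$ is settled by a signature argument. The classes $A_1,\ldots,A_9$ span a $9$-dimensional subspace $V\subset H^2(X;\R)$ (their Gram matrix $-2I_9$ is nondegenerate) which is negative definite. Since $H^2(X;\R)$ has signature $(1,9)$, the orthogonal complement $V^\perp$ is $1$-dimensional and positive definite. But $K_X\in V^\perp$ (because $K_X\cdot A_i = 0$ for every $i$), and $K_X^2 = 9 - 9 = 0$, which is incompatible with positive definiteness.

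For $N = 7$ and $N = 8$ the signature argument fails: the sublattice $K_X^\perp\subset H^2(X;\Z)$ is, up to sign of the form, isomorphic to the $E_N$ root lattice, which contains $A_1^N$ as a sub-root system and thus admits $N$ pairwise orthogonal roots of square $-2$. The obstruction here is genuinely symplectic, and I plan to exploit the homological finiteness developed in Section 3. By Lemma 4.1 one may replace $\omega$ by a symplectic form satisfying the area condition $\omega(A_i) < -c_1(K_X)\cdot[\omega]$ for every $i$, without altering the underlying smooth embedding. In a reduced basis $H,E_1,\ldots,E_N$ of $(X,\omega)$, Lemma 3.6 then forces each $A_i$ into the restricted form
\[
A_i = a_i H - (a_i - 1)E_{j_1} - E_{j_2} - \cdots - E_{j_{2a_i + 2}}, \qquad 0 \leq a_i \leq \tfrac{N-2}{2},
\]
where the $j_k$ depend on $i$. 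In particular $a_i \in \{0,1,2\}$ when $N = 7$, and $a_i \in \{0,1,2,3\}$ when $N = 8$. A direct check shows that at most one of the $A_i$ can take the maximal value of $a_i$: two $a = 2$ classes in $N = 7$ have intersection $4 - |S_1\cap S_2| \leq -1$ (since $|S_1|,|S_2| = 6$ inside $\{1,\ldots,7\}$), and two $a = 3$ classes in $N = 8$ (of the form $-K_X - E_\ell$) pair to $-1$; neither is zero.

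With finiteness of candidate classes in hand, the remaining task is to enumerate, up to the Weyl-group symmetry permuting the exceptional classes $E_1, \ldots, E_N$, the $N$-tuples of such classes that are pairwise orthogonal, and for each candidate to derive a contradiction from the reduced basis inequalities (e.g.\ $\omega(H) \geq \omega(E_{i_1}) + \omega(E_{i_2}) + \omega(E_{i_3})$) combined with the positivity conditions $\omega(A_i) > 0$. The hard part of the theorem lies precisely in this combinatorial enumeration for $N = 7, 8$: one must control how the index multisets $\{j_1, \ldots, j_{2a_i + 2}\}$ overlap across distinct classes under pairwise orthogonality, and show that no allowable overlap pattern is compatible with the reduced basis area constraints. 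The uniqueness of the maximal-$a$ class above should serve as the backbone of the enumeration, pruning the case analysis down to a short list of standard configurations to rule out one by one.
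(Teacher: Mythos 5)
Your $N=9$ argument is correct and in fact cleaner than the paper's (which deduces the nonexistence of a ninth class from its Lemma 5.1): nine pairwise orthogonal $(-2)$-classes span a rank-$9$ negative definite subspace, forcing the rank-one orthogonal complement to be positive definite, yet that complement contains the nonzero isotropic class $K_X$. Your setup for $N=7,8$ also matches the paper's: impose an area condition via Lemma 4.1, apply Lemma 3.6 to force each class into the restricted form, and note that at most one class can carry the maximal $a$-coefficient.

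The gap is in the step you defer to "combinatorial enumeration." You propose to finish by showing that no pairwise-orthogonal overlap pattern is compatible with the reduced-basis area inequalities. This cannot succeed: for $N=8$ the configuration
$$F_1=3H-2E_{1}-E_{2}-\cdots-E_{8},\qquad F_j=H-E_a-E_b-E_c \ (2\leq j\leq 8),$$
with the triples $\{a,b,c\}\subset\{2,\dots,8\}$ running over the seven lines of the Fano plane, is pairwise orthogonal, satisfies every reduced-basis and area constraint (it survives precisely as case (a) of the paper's Lemma 5.1, and an analogue survives for $N=7$), and each of its classes is individually representable by a symplectic $(-2)$-sphere, as the paper points out in the introduction. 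So no amount of area bookkeeping closes the argument; the obstruction is genuinely geometric. The paper's proof symplectically blows down $E_2,\dots,E_8$ (using Lemma 3.2 to represent them by $J$-holomorphic $(-1)$-spheres disjoint from the configuration), and then $E_1$ as well after a Taubes-type argument, producing seven embedded symplectic spheres of degree one in $\C\P^2$ meeting in seven triple points according to the incidence relation of the Fano plane; this is impossible by the Ruberman--Starkston theorem that the Fano line arrangement admits no topological $\C$-realization. That external geometric input is the essential missing ingredient in your plan. A secondary, more technical omission: to pin the classes down to the Fano configuration in the first place, the paper needs the finer area normalization of Lemma 5.1 ($\delta_2<\delta_1<2\delta_2$ and $7\delta_i<-c_1(K_X)\cdot[\omega]$, the latter secured by representing $-c_1(K_X)$ by pseudo-holomorphic curves through a prescribed point), not just the single inequality $\omega(A_i)<-c_1(K_X)\cdot[\omega]$.
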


We remark that by a theorem of Ruberman \cite{Ru}, there exist $N$ disjointly embedded smooth 
$(-2)$-spheres in $X=\C\P^2\# N\overline{\C\P^2}$ for any $N\geq 2$. On the other hand, for $N=7$ and $8$, there exist
$N$ homology classes $F_1,F_2,\cdots,F_N\in H_2(X)$, where  $F_i\cdot F_j=0$ for any $i\neq j$,
and each individual $F_i$ can be represented by a symplectic $(-2)$-sphere (cf. Lemma 5.1). 
The above theorem says that these homology classes can not be represented simultaneously by disjoint symplectic $(-2)$-spheres. 
For $N=9$, the corresponding homology classes do not exist (cf. Lemma 5.1). 

The proof of Theorem 1.4 relies on a recent theorem of Ruberman and Starkston, which asserts that the combinatorial line arrangement coming from the Fano plane has no topological
$\C$-realization (cf. \cite{RuS}). Our result and method raises naturally the following interesting 

\vspace{1.5mm}

\noindent{\bf Question:} {\it For each $N\geq 2$, what is the maximal number of disjointly embedded symplectic $(-2)$-spheres in the rational $4$-manifold $\C\P^2\# N\overline{\C\P^2}$?}

\vspace{1.5mm}

We point out that for any $N\geq 3$ and odd, there always exist $N-1$ disjointly embedded symplectic $(-2)$-spheres in 
$\C\P^2\# N\overline{\C\P^2}$. So for $N=7$ and $9$, the maximal number is $6$ and $8$ respectively. 

As for the proof of Theorem 1.1, the case where $G=\Z_2$ and $M_G$ is rational is the most delicate one. Here the key technical
result, stated as Lemma 5.1, is a classification of all possible homological expressions (in a reduced basis) of the classes of any given set of $8$ 
disjointly embedded symplectic $(-2)$-spheres in the rational elliptic surface $\C\P^2\# 9\overline{\C\P^2}$, where the symplectic structure on
$\C\P^2\# 9\overline{\C\P^2}$ is chosen to obey a certain set of delicate area constraints 
on the $(-2)$-spheres
(such a symplectic structure always exists by Lemma 4.1). The proof of Theorem 1.4 also relies on this
technical result. 

\vspace{2.5mm}

The organization of the paper is as follows. In Section 2, we give an examination of the fixed-point
set structure using the traditional methods in group actions, coupled with some standard 
results and techniques in symplectic $4$-manifolds and Seiberg-Witten theory. Section 3 is occupied 
by a study of symplectic surfaces in rational $4$-manifolds. We begin by deriving some basic constraints on the $a$, $b_i$-coefficients of a class $A$ which is represented by a connected,
embedded symplectic surface. The later part of the section focuses on the classes of symplectic spheres; in particular, it contains Lemma 3.6, which gives an upper bound on the $a$-coefficient of a symplectic $(-2)$-sphere or $(-3)$-sphere under an area condition. In Section 4, we begin by 
proving a lemma (i.e., Lemma 4.1) which allows us to freely
impose certain auxiliary area conditions. This lemma, especially when combined with Lemma 3.6, proves to be very critical in our 
analysis of the embedding $D\rightarrow M_G$. We then prove several non-existence results 
concerning certain symplectic configurations in rational $4$-manifolds. These results are used to
further remove some ambiguities concerning $2$-dimensional fixed components in Section 2.
In Section 5, we give proofs of the main theorems.

\section{The fixed-point set: a preliminary examination}

We give a preliminary analysis of the fixed-point set structure, using mainly the traditional methods. For the reader's convenience, we shall begin with a brief review of the various 
$G$-index theorems that will be frequently used in this section. To this end, let $M$ be a symplectic $4$-manifold equipped with a symplectic $G$-action, where $G$ is cyclic of order $n$. For any nontrivial element $g\in G$, the fixed-point set $Fix(g)$ of $g$ consists of a disjoint union of symplectic surfaces $\{Y_i\}$ and isolated points $\{q_j\}$. In general, $Fix(g)$ depends on $g$, but when $G$ is of prime order, $Fix(g)$ coincides with the fixed-point set $M^G$ of the $G$-action. 
The local action of $g$ near $Fix(g)$ is determined by a set of weights $\{c_i\}, \{(a_j,b_j)\}$ (where $0<c_i,a_j,b_j<|g|$) as follows. Along each fixed symplectic surface $Y_i$, the symplectic structure on $M$ determines a complex structure on the normal bundle of $Y_i$. With this understood,
the action of $g$ on the normal bundle of $Y_i$ is given by multiplication of $exp(2\pi ic_i/|g|)$. Likewise, at each isolated fixed point $q_j$, the action of $g$ on the tangent space at $q_j$ has eigenvalues $\exp(2\pi ia_j/|g|)$, $\exp(2\pi ib_j/|g|)$ with respect to a complex structure compatible to the symplectic structure on $M$. We recall that an isolated point $q\in M$ is of isotropy of order $m$ 
of type $(1,b)$ if $q\in Fix(g)$ for some element $g\in G$ of order $|g|=m$ with weights $(1,b)$. 
(Note that when $m=n$, the order of $G$, $q\in M^G$ is an isolated fixed point of $G$.) We remark that $q\in M$ corresponds to an isolated Du Val singularity in $M/G$ precisely when $b=m-1$. 

The fixed-point set $Fix(g)$ and the associated weights $\{c_i\},\{(a_j,b_j)\}$ play a prominent role
in the various $G$-index theorems, which we review next. See \cite{CK2} and the references therein
for more details. We begin with the Lefschetz fixed point theorem and the $G$-signature theorem. 
Recall that the Lefschetz number $L(g,M)$ is defined, for any $g\in G$, as 
$$
L(g,M)=\sum_{k=0}^4 (-1)^k tr(g|_{H^k(M;\R)}),
$$
where $tr(g|_{H^k(M;\R)})$ stands for the trace of the induced action of $g$ on $H^k(M;\R)$. Likewise,
the number $Sign(g,M)$ is defined as 
$$
Sign(g,M)=tr(g|_{H^{2,+}(M;\R)})-tr(g|_{H^{2,-}(M;\R)}),
$$
where for the action of $g$ on $H^{2,+}(M;\R)$ and $H^{2,-}(M;\R)$, we fix a $G$-invariant Riemannian metric on $M$ and look at the action of $G$ on the space of self-dual and anti-self-dual harmonic forms respectively. With this understood, the Lefschetz fixed point theorem states that
$$
L(g,M)=\chi(Fix(g))=\sum_i \chi(Y_i)+ \# \{q_j\},
$$
and the $G$-signature theorem states that 
$$
Sign(g,M)=-\sum_j \cot (\frac{a_j\pi}{|g|})\cdot \cot (\frac{b_j\pi}{|g|})+ 
\sum_i \csc^2(\frac{c_i\pi}{|g|})\cdot Y_i^2.
$$
We remark that when $G$ is of prime order $n=p$, one can sum over all the nontrivial elements 
$g\in G$ and obtain the following (weak) version of the Lefschetz fixed point theorem 
$$
p\cdot \chi(M/G)=\chi(M) + (p-1)\cdot \chi (M^G),
$$
and the $G$-signature theorem
$$
p\cdot Sign(M/G)=Sign(M)+\sum_j def_{q_j} +\sum_i def_{Y_i},
$$
where $def_{q_j}=\sum_{1\neq \lambda\in\C, \lambda^p=1} \frac{(1+\lambda^{a_j})(1+\lambda^{b_j})}
{(1-\lambda^{a_j})(1-\lambda^{b_j})}$, $def_{Y_i}=\frac{p^2-1}{3}\cdot Y_i^2$, which are called the signature defects. 

Next we review the $G$-index theorem for Dirac operators, where we further assume that $M$ is a spin $4$-manifold and $G$ is of an odd prime order $n=p$. In this case, it was shown in \cite{CK2}
that the $G$-action on $M$ must be spin; in particular, the orbifold $M/G$ is spin. With this understood, we fix a $G$-invariant Riemannian metric on $M$ and let $\D$ be the corresponding 
Dirac operator. Then $\text{Ker} \D$ and $\text{Coker} \D$ are complex $G$-representations. For any
nontrivial element $g\in G$, we write 
$$
\text{Ker} \D=\oplus_{k=0}^{p-1} V_k^{+}, \;\; \text{Coker} \D=\oplus_{k=0}^{p-1} V_k^{-},
$$
where $V_k^{+}$, $V_k^{-}$ are the eigenspaces of $g$ with eigenvalue 
$\mu_p^k:=\exp\frac{2k\pi i}{p}$. Then the Spin number $Spin(g,M)$ is defined as
$$
Spin(g,M)=\sum_{k=0}^{p-1} d_k \mu_p^k, \mbox{ where }
d_k\equiv \dim_\C V_k^{+}-\dim_\C V_k^{-}. 
$$
Since both $\text{Ker} \D$ and $\text{Coker} \D$ are quaternion vector spaces, 
and the quaternions $i$ and $j$ are anti-commutative, it follows that $V_0^{\pm}$ are quaternion 
vector spaces, and that multiplication by $j$ maps $V_k^{\pm}$ isomorphically to $V_{p-k}^{\pm}$
for $k>0$. This implies that $d_0$ is even and  $d_k=d_{p-k}$ for $k>0$. Finally, we note that $d_0$
equals the index of the Dirac operator on the spin orbifold $M/G$.

The following formula for $Spin(g,M)$ is given in Lemma 3.8 of 
\cite{CK2}, assuming that the weights of the action of $g$ near $Fix(g)=\{Y_i\}\cup \{q_j\}$ are 
$\{c_i\}$, $\{(a_j,b_j)\}$: 
$$
Spin(g,M)=-\sum_j (-1)^{k(g,q_j)} \frac{1}{4} \csc(\frac{a_j\pi}{p})\csc(\frac{b_j\pi}{p})+
\sum_i (-1)^{k(g,Y_i)} \frac{Y_i^2}{4}  \csc(\frac{c_i\pi}{p})\cot(\frac{c_i\pi}{p}),
$$
where $k(g,q_j)$ is given by the equation $k(g,q_j)\cdot p= 2r_j +a_j+b_j$ for
$0\leq r_j<p$, and $k(g,Y_i)$ is given by the equation $k(g,Y_i)\cdot p= 2r_i+c_i$ for $0<r_i<p$.
This concludes the review of the $G$-index theorems to be used in this section. 

With the preceding understood, for the rest of this section, we assume that $M$ is Calabi-Yau with
$b_1>0$; in particular, $M$ is spin. Furthermore, we assume that no subgroups of $G$ act freely on
$M$. We shall denote by $g_i$ the genus of the symplectic surface $Y_i$.
Then the adjunction formula, together with the fact that $c_1(K_M)=0$, implies that $Y_i^2=2g_i-2$ for each $i$. 

The following homological constraints on $M$ will be frequently used: $2\leq b_1(M)\leq 4$, and 
$\chi(M)=Sign(M)=0$, which implies 
$$
b_2^{+}(M)=b_2^{-}(M)=b_1(M)-1.
$$
Finally, recall from \cite{C3}, Theorem 1.9 and Lemma 4.1, that $M_G$ is of torsion canonical
class if $M/G$ has at most isolated Du Val singularities; otherwise, $M_G$ is either a rational surface,
which occurs precisely when $b_1(M/G)=b_1(M_G)=0$, or $M_G$ is a ruled surface over $T^2$ and 
$b_1(M/G)=b_1(M_G)=2$. We note that the above homological constraints on $M$ apply to $M_G$ as well
when $M_G$ is of torsion canonical class and $b_1(M_G)=b_1(M/G)>0$. When $M_G$ is of torsion canonical class and $b_1(M_G)=0$, we note that either $\chi(M_G)=12$ (where $M_G$ is a homology Enriques surface) or $\chi(M_G)=24$ (where $M_G$ is a homology $K3$ surface). 

Now we begin with our analysis on the fixed-point set structures. First, we observe the following lemma. 

\begin{lemma}
Suppose $b_1(M)=2$ or $3$, and $G$ is of prime order $p$ such that $M_G$ has torsion canonical class.  Then $p=2$ and $M^G$ consists of $8$ isolated points. Furthermore, $b_1(M/G)=0$ and $b_2^{+}(M/G)=1$.
\end{lemma}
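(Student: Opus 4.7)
The plan is as follows. Because $M_G$ has torsion canonical class, $M/G$ has at most isolated Du Val singularities, so $M^G$ contains no $2$-dimensional component. For $G=\Z_p$ of prime order, every Du Val isotropy subgroup must lie in $SL(2,\C)$, so each isolated fixed point has weights of the form $(a_j,p-a_j)$ with $a_j\in\{1,\ldots,p-1\}$ (i.e.\ $A_{p-1}$ type). The weak Lefschetz formula together with $\chi(M)=0$ gives $\chi(M/G)=(p-1)\#M^G/p$, and since each $A_{p-1}$ resolution contributes $p-1$ to the Euler number,
$$
\chi(M_G)\;=\;\chi(M/G)+(p-1)\#M^G\;=\;\frac{(p^2-1)\#M^G}{p}.
$$
Non-freeness forces $\#M^G>0$, hence $\chi(M_G)>0$, which (per the discussion in the excerpt) rules out $b_1(M_G)>0$. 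So $b_1(M_G)=0$ and $\chi(M_G)\in\{12,24\}$. Using $\gcd(p,p^2-1)=1$, the divisibility condition $(p^2-1)\mid 12$ or $24$ leaves only $(p,\#M^G)=(2,8)$ in the homology Enriques case and $(2,16),(3,9),(5,5)$ in the homology $K3$ case.

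Next I would rule out the three $K3$ possibilities using $b_1(M)\in\{2,3\}$. Since $b_1(M_G)=b_1(M/G)=0$, the $G$-action on $H^1(M;\Q)$ has no invariants; for $p$ odd prime the rational irreducible $\Z_p$-representations are trivial or $(p-1)$-dimensional (coming from $\Q(\mu_p)$), so no-invariants forces $(p-1)\mid b_1(M)$. This immediately kills $(5,5)$ and forces $b_1(M)=2$ in the case $(3,9)$. For $(3,9)$ I apply the $G$-signature theorem: each fixed point contributes $\cot^2(a_j\pi/3)=1/3$, so $\text{Sign}(g,M)=9\cdot 1/3=3$, violating $|\text{Sign}(g,M)|\le b_2(M)=2$. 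The $(2,16)$ case is the delicate one since the $G$-signature contribution vanishes for $p=2$; here I use the Lefschetz number for a generator $g$, namely $L(g,M)=16$, combined with $\text{tr}(g|H^1)=\text{tr}(g|H^3)=-b_1(M)$ (from Poincar\'e duality and $H^1(M)^G=0$), which gives $\text{tr}(g|H^2)=14-2b_1(M)$; but $|\text{tr}(g|H^2)|\le b_2(M)=2(b_1(M)-1)$ forces $b_1(M)\ge 4$, contradicting $b_1(M)\in\{2,3\}$.

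Only $p=2$, $\#M^G=8$ survives. The $G$-signature sum then gives
$$
\text{Sign}(M_G)\;=\;-\frac{2(p^2-1)\#M^G}{3p}\;=\;-8,
$$
so $b_2^+(M_G)=(b_2(M_G)+\text{Sign}(M_G))/2=(10-8)/2=1$; and since the resolution of isolated Du Val singularities introduces only negative definite classes (the $A_{p-1}$ lattice) orthogonal to $H^2(M/G)$, one obtains $b_2^+(M/G)=b_2^+(M_G)=1$, while $b_1(M/G)=b_1(M_G)=0$ was already established. The main obstacle I anticipate is precisely the $(2,16)$ $K3$ case: the $G$-signature theorem is silent for $p=2$, and the contradiction must be squeezed out of the Lefschetz number of a single generator via the no-invariants constraint on $H^1$ together with the dimension bound on $\text{tr}(g|H^2)$.
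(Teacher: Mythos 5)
Your proof is correct, and the skeleton (Du Val singularities, the Euler characteristic identity $\chi(M_G)=\frac{(p^2-1)}{p}\#M^G$, the reduction to $\chi(M_G)\in\{12,24\}$ with $b_1(M_G)=0$) coincides with the paper's. Where you genuinely diverge is in eliminating the homology-$K3$ case $\chi(M_G)=24$: you enumerate the three arithmetic solutions $(p,\#M^G)=(2,16),(3,9),(5,5)$ and kill each one separately, using the rational representation theory of $\Z_p$ on $H^1(M;\Q)$ (no invariants forces $(p-1)\mid b_1(M)$), the $G$-signature theorem for $(3,9)$, and a Lefschetz-number-versus-$b_2$ bound for $(2,16)$. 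The paper instead disposes of $\chi(M_G)=24$ in one line: a homology $K3$ has $b_2^{+}=3$, while $b_2^{+}(M_G)=b_2^{+}(M/G)\leq b_2^{+}(M)=b_1(M)-1\leq 2$, using the constraint $b_2^{\pm}(M)=b_1(M)-1$ that it has already recorded. Your route is longer but buys more: it actually determines the fixed-point data in the excluded $K3$ cases (which the paper needs again, in essentially this form, for Lemma 2.8(2) when $b_1(M)=4$), and your Lefschetz argument for $(2,16)$ shows directly that $16$ fixed points force $b_1(M)\geq 4$. The paper's argument is the more economical one for the statement at hand, since the single inequality $b_2^{+}(M/G)\leq b_1(M)-1$ supersedes all three case analyses; your closing signature computation $\mathrm{Sign}(M_G)=-\frac{2(p^2-1)\#M^G}{3p}=-8$ is a correct (if more elaborate) substitute for the paper's observation that a homology Enriques surface has $b_2^{+}=1$.
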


\begin{proof}
Since $M_G$ has torsion canonical class, $M/G$ has only isolated Du Val singularities
(cf. \cite{C3}, Lemma 4.1). By the Lefschetz fixed point theorem,
$$
p\cdot \chi(M/G)=\chi(M)+ (p-1)\cdot \# M^G.
$$
With $\chi(M)=0$, and observing that the resolution of each singular point of $M/G$ is a chain 
of $p-1$ spheres, we obtain the following expression 
$$
\chi(M_G)=\chi(M/G)+(p-1) \cdot \# M^G=(p-1)(\frac{1}{p}+1)\cdot \# M^G.
$$
On the other hand, note that $\chi(M_G)=0$, $12$, or $24$.
It is clear that $\chi(M_G)>0$, as $M^G\neq \emptyset$, so that $\chi(M_G)=12$ or $24$. We also note that $b_1(M_G)=0$
in these two cases. Moreover, since $b_1(M)=2$ or $3$, we have 
$b_2^{+}(M/G)\leq b_2^{+}(M)=b_1(M)-1\leq 2$, so that $\chi(M_G)=12$ must be true. The equation 
$(p-1)(\frac{1}{p}+1)\cdot \# M^G=12$ has only one solution: $p=2$ and 
$\# M^G=8$. Finally, note that $b_1(M/G)=b_1(M_G)=0$,  and $b_2^{+}(M/G)=b_2^{+}(M_G)=1$.
This finishes off the proof.

\end{proof}

\subsection{The case where $b_1=2$}
We first assume $G$ is of prime order $p$. Let $g\in G$ be a generator of $G$.
Let $\{q_j\}$ be the set of isolated fixed points and set $z:=\#\{q_j\}$, and let $\{Y_i\}$ be the
set of $2$-dimensional fixed components, with $g_i$ the genus of $Y_i$. 

We begin with the case where $M_G$ is irrational ruled. Note that this happens exactly when
$b_1(M/G)=2=b_1(M)$, which means that the action of $G$ on $H^1(M;\R)$ is trivial. 

\begin{lemma}
Suppose $G$ is of prime order and $M_G$ is irrational ruled. Then the fixed-point set $M^G$ 
consists of a disjoint union of tori of self-intersection zero.
\end{lemma}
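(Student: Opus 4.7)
The plan is to combine the Lefschetz fixed point theorem and the $G$-signature theorem with structural input from $M_G$ being irrational ruled over $T^2$, and then to invoke McDuff's classification of symplectic spheres in minimal ruled surfaces to finish.

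First I would establish the numerical setup. Since $b_1(M_G)=b_1(M/G)=2=b_1(M)$ (from Theorem 1.9 of \cite{C3}), the $G$-action on $H^1(M;\R)$ is trivial, so $tr(g|H^1)=2$ for every $g\in G$. Writing $F=\sum_i(2-2g_i)$ and $z=\#\{q_j\}$, and using $Fix(g)=M^G$ for $g\ne e$, the Lefschetz theorem gives $L(g,M)=F+z=tr(g|H^2)-2$. Summing over $G$ and using $\sum_{g\in G}tr(g|H^2)=p\,b_2(M/G)$ yields
\[
b_2(M/G)=2+\frac{(p-1)(F+z)}{p}.
\]
The minimal symplectic resolution $M_G\to |M/G|$ contributes only negative-definite classes, so $b_2^+(M/G)=b_2^+(M_G)=1=b_2^+(M)$; combined with $b_2(M/G)\le b_2(M)=2$ and $p\mid F+z$, this forces $F+z\in\{-p,0\}$. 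For $p\ge 3$ the inequality $b_2(M/G)\ge 1$ excludes $F+z=-p$. For $p=2$, $F+z=-2$ would give $Sign(M/G)=1$, and the $G$-signature theorem (with $def_{q_j}=0$ at each $\Z_2$ fixed point) would force $z=0$, so $Sign(M_G)=Sign(M/G)-z=1$, contradicting $Sign(M_G)\le 0$ for ruled $M_G$. Hence $F+z=0$ in every case.

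With $Sign(M/G)=0$, the $G$-signature theorem reduces to
\[
\sum_j def_{q_j}+\tfrac{p^2-1}{3}\,z=0,
\]
using $\sum_i(2g_i-2)=-F=z$. Rewriting $def_{q_j}=-\sum_{k=1}^{p-1}\cot(ka_j\pi/p)\cot(kb_j\pi/p)$ via $(1+\mu^m)/(1-\mu^m)=i\cot(m\pi/p)$, and applying Cauchy--Schwarz together with the classical identity $\sum_{k=1}^{p-1}\cot^2(k\pi/p)=(p-1)(p-2)/3$, I obtain $|def_{q_j}|\le (p-1)(p-2)/3<(p^2-1)/3$. This forces $z=0$, whence $F=0$, $\sum_i(g_i-1)=0$, and $\chi(M_G)=\chi(M/G)=0$, so $M_G$ is a minimal $\s^2$-bundle over $T^2$.

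To conclude, suppose some $Y_i$ is a sphere ($g_i=0$, $Y_i^2=-2$). The $p$-fold cyclic branched-cover ramification formula gives $\tilde Y_i^2=p\,Y_i^2=-2p\ne 0$ in $M_G$, so $\tilde Y_i$ would be an embedded symplectic sphere of nonzero self-intersection in a minimal symplectic ruled surface, contradicting McDuff's theorem that every such sphere represents (up to sign) the fiber class and thus has self-intersection zero. Therefore each $g_i\ge 1$, and combined with $\sum_i(g_i-1)=0$, each $g_i=1$ and $Y_i^2=0$. The main obstacle is precisely this final step: the Lefschetz and $G$-signature identities admit a balanced configuration of one higher-genus fixed component and one fixed sphere, realizable as disjoint orthogonal classes in the rank-$2$ intersection lattice of $M$, and it is McDuff's geometric classification of symplectic spheres in minimal ruled surfaces that eliminates the sphere possibility.
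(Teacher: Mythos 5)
Your proof is correct, and it reaches the same endpoint as the paper by a partly different route. The overall skeleton is shared: trivial $G$-action on $H^1$, Lefschetz plus $G$-signature to kill the isolated fixed points, and then the geometric fact that a minimal $\s^2$-bundle over $T^2$ contains no sphere of nonzero self-intersection (the paper phrases this as the absence of $(-2p)$-spheres, which follows from $\pi_2(M_G)$ being generated by the fiber; your appeal to McDuff is heavier machinery than needed but the conclusion is the same). Your elimination of the degenerate case $b_2^{-}(M/G)=0$ (your $F+z=-2$, $p=2$ branch) is essentially the paper's: both derive $z=0$ there and then contradict the signature/Betti numbers of an irrational ruled surface. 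Where you genuinely diverge is the step $z=0$ in the main case: the paper applies the pointwise $G$-signature theorem, observing that $b_2^{+}(M/G)=1$ forces at most one component $Y_i$ with $Y_i^2>0$ and choosing the generator so that $c_i=1$ there, then comparing $\cot$-terms against $\csc^2(\pi/p)$; you instead sum over the group and bound each signature defect by Cauchy--Schwarz together with $\sum_{k=1}^{p-1}\cot^2(k\pi/p)=(p-1)(p-2)/3$, getting $|def_{q_j}|\le (p-1)(p-2)/3<(p^2-1)/3$ and hence $z=0$ directly. Your version avoids the generator-normalization and the single-positive-component observation entirely, and is arguably cleaner and more robust; the paper's version keeps the information attached to a fixed generator, which it reuses elsewhere. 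Both are valid.
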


\begin{proof}
We begin by observing $b_2^{-}(M)=b_1(M)-1=1$, so that either $b_2^{-}(M/G)=0$ or 
$b_2^{-}(M/G)=1$. We claim $b_2^{-}(M/G)=1$. To see this, suppose to the contrary that 
$b_2^{-}(M/G)=0$. Then $G=\Z_2$ must be true. With this understood, with 
$b_2^{+}(M)=b_1(M)-1=1$ and $b_2^{+}(M/G)=b_2^{+}(M_G)=1$, 
the Lefschetz fixed point theorem gives 
$$
\sum_i (2-2g_i)+ z=L(g,M)=2-2\times 2+1-1=-2,
$$
and the $G$-signature theorem gives 
$$
\sum_i Y_i^2=Sign(g,M)=1-(-1)=2.
$$
With $Y_i^2=2g_i-2$ for each $i$, it follows easily that $z=0$, i.e., there are no isolated fixed points.
As a consequence, we note that the underlying space of $M/G$ is smooth, and it is simply the resolution $M_G$, which is an irrational ruled 
$4$-manifold by the assumption. But this implies that $b_2^{-}(M/G)=b_2^{-}(M_G)\geq 1$, contradicting the assumption $b_2^{-}(M/G)=0$. 
Hence we must have $b_2^{-}(M/G)=1$. With $b_2^{-}(M/G)=1$, it
follows easily that $L(g,M)=0$ and $Sign(g,M)=0$. 

The equation $L(g,M)=0$ implies $z=\sum_i (2g_i-2)=\sum_i Y_i^2$. Suppose to the contrary that $z>0$. Then there must be a component $Y_i$ such that $Y_i^2>0$. 
Since $b_2^{+}(M/G)=b_2^{+}(M_G)=1$ as $M_G$ is irrational ruled, it follows easily that there can be only one such component. As a consequence, by replacing $g$ with a suitable power, we may assume that the weight of the action of $g$ along the component $Y_i$ with $Y_i^2>0$ equals $1$
(i.e., $c_i=1$ if $Y_i^2>0$). It follows from the $G$-signature theorem that 
\begin{eqnarray*}
Sign(g,M) & = & -\sum_j \cot (\frac{a_j\pi}{p})\cdot \cot (\frac{b_j\pi}{p})+ \sum_i \csc^2(\frac{c_i\pi}{p}) Y_i^2\\
                 & > & \sum_i (\csc^2(\frac{c_i\pi}{p})-\csc^2(\frac{\pi}{p}) )\cdot Y_i^2,
 \end{eqnarray*}
where we use the fact that $z=\sum_i Y_i^2$ and $\cot (\frac{a_j\pi}{p})\cdot \cot (\frac{b_j\pi}{p})< \csc^2(\frac{\pi}{p})$ for each $j$. Since $c_i=1$ when $Y_i^2>0$, it follows easily that 
$\sum_i (\csc^2(\frac{c_i\pi}{p})-\csc^2(\frac{\pi}{p}) )\cdot Y_i^2\geq 0$.
This leads to a contradiction that $Sign(g,M)>0$, hence $z=0$ must be true. 

With $z=0$, $M_G$ is simply the underlying manifold $|M/G|$, which must be a $\s^2$-bundle over $T^2$ as $b_2^{-}(M/G)=1$. It remains to show that each $Y_i$ is a torus. This follows easily by observing that $\sum_i (2g_i-2)=z=0$, and that $g_i>0$ for each $i$. The latter is true because if 
$Y_i$ is a sphere, then $Y_i^2=-2$, so that $Y_i$ descends to a $(-2p)$-sphere in $M_G$. But $M_G$ is a $\s^2$-bundle over $T^2$, it does not contain any $(-2p)$-sphere. This finishes off the proof.

\end{proof}

Next we consider the case where $M_G$ is rational; note that this happens exactly when 
$b_1(M/G)=0$. With $b_1(M)=2$, the action of $G$ on $H^1(M;\Z)/\text{Tor}$ is given by
elements of $SL(2,\Z)$. It follows easily that $G$ is either $\Z_2$ or $\Z_3$. 

\begin{lemma}
Suppose $M_G$ is rational and $G=\Z_2$. Then $G$ has $8$ isolated fixed points. 
Furthermore, $\{Y_i\}\neq\emptyset$ and $\sum_i Y_i^2=2(1-b_2^{-}(M/G))$. 
\end{lemma}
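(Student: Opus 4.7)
The plan is to combine the Lefschetz fixed point theorem, the $G$-signature theorem, and the structural result that a $\Z_2$-quotient with only isolated Du Val singularities resolves to a surface of torsion canonical class.

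First I would dispose of the statement $\{Y_i\}\neq\emptyset$ by contradiction. If there were no $2$-dimensional fixed components, then $M/G$ would have only isolated singularities, each of which, for $G=\Z_2$, is of type $A_1$ and hence Du Val. By Lemma~4.1 of \cite{C3}, which was recalled in the excerpt, this would force $M_G$ to have torsion canonical class, contradicting the hypothesis that $M_G$ is rational.

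Next I would compute the characters of $g$ (the generator of $G=\Z_2$) acting on $H^\ast(M;\R)$. The hypothesis $M_G$ rational forces $b_1(M/G)=b_1(M_G)=0$, while $b_1(M)=2$, so $g$ acts as $-I$ on $H^1(M;\R)$ with trace $-2$, and by Poincar\'e duality the same trace on $H^3(M;\R)$. Since $g$ is a symplectomorphism it fixes $[\omega]$, which spans $H^{2,+}(M;\R)$ (as $b_2^+(M)=1$), so the trace on $H^{2,+}$ is $1$. As $H^{2,-}(M;\R)$ is $1$-dimensional, $g$ acts on it by $\pm 1$, and writing $b:=b_2^-(M/G)\in\{0,1\}$, the trace equals $2b-1$. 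Unpacking,
\begin{equation*}
L(g,M)=1-(-2)+(1+2b-1)-(-2)+1=6+2b,\qquad \mathrm{Sign}(g,M)=1-(2b-1)=2-2b.
\end{equation*}

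Finally I would feed these into the two fixed point formulas. For the $G$-signature theorem, every isolated fixed point of a $\Z_2$-action has type $(1,1)$, so $\cot(\pi/2)\cdot\cot(\pi/2)=0$ kills the point contributions, while each $2$-dimensional component has weight $c_i=1$ and contributes $\csc^2(\pi/2)\cdot Y_i^2=Y_i^2$; this yields $\sum_i Y_i^2=\mathrm{Sign}(g,M)=2(1-b_2^-(M/G))$, which is the second assertion. The Lefschetz theorem gives $z+\sum_i(2-2g_i)=L(g,M)=6+2b$, and the adjunction formula combined with $K_M=0$ gives $Y_i^2=2g_i-2$, so this becomes $z-\sum_i Y_i^2=6+2b$. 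Substituting the signature identity produces $z=8$, as required.

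I expect the delicate point to be the identification of the traces on $H^{2,\pm}$; once one recognizes that $H^{2,+}$ is spanned by the $g$-invariant class $[\omega]$, the rest is bookkeeping, and the calculation collapses nicely because $\cot(\pi/2)=0$ eliminates the isolated-point contribution to the signature.
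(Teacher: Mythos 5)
Your proposal is correct and follows essentially the same route as the paper: the nonemptiness of $\{Y_i\}$ via the Du Val/torsion-canonical-class dichotomy, and then the Lefschetz fixed point theorem combined with the $G$-signature theorem (with the observation that type $(1,1)$ points contribute $\cot^2(\pi/2)=0$ to the signature) to get $\sum_i Y_i^2=2(1-b_2^-(M/G))$ and $z=8$. The only cosmetic difference is that you parametrize $tr(g|_{H^{2,-}})$ by $b_2^-(M/G)$ at the outset, whereas the paper keeps that trace symbolic and eliminates it between the two identities.
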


\begin{proof}
For $G=\Z_2$, we first observe that the $G$-Signature theorem gives 
$$
1-tr(g|_{H^{2,-}})=Sign(g,M)=\sum_i Y_i^2=\sum_i (2g_i-2).
$$
On the other hand, the Lefschetz fixed point theorem implies that
$$
z+\sum_i (2-2g_i)=L(g,M)=2-4\times (-1)+1+tr(g|_{H^{2,-}})=8-\sum_i (2g_i-2). 
$$
It follows that $z=8$. Finally,  $\sum_i Y_i^2=1-tr(g|_{H^{2,-}})=2(1-b_2^{-}(M/G))$
because $b_2^{-}(M)=1$. Note that $\{Y_i\}\neq\emptyset$ because $M_G$ is rational. This finishes the proof. 

\end{proof}

\begin{lemma}
Suppose $M_G$ is rational and $G=\Z_3$. Then $G$ has $6$ isolated fixed points, exactly three of which are of type $(1,1)$.  Furthermore, 
$\sum_i Y_i^2=0$, and at most one of the components in $\{Y_i\}$ is a sphere.
\end{lemma}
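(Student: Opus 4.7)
The plan is to mirror the structure of Lemma~2.3, extracting numerical identities from the Lefschetz fixed point theorem and the $G$-signature theorem, and sharpening them using the $G$-index theorem for the Dirac operator together with geometric input from the resolution $M_G$. Since $M_G$ being rational forces $b_1(M/G)=0$ while $b_1(M)=2$, averaging over $G$ shows that a generator $g$ acts on $H^1(M;\R)$ with trace $-1$. The $G$-invariance of $[\omega]$ and the triviality of the real $\Z_3$-action on the one-dimensional space $H^{2,-}(M;\R)$ give $tr(g|H^2(M;\R))=2$, so $L(g,M)=6$ and $Sign(g,M)=0$.

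Writing $z_1,z_2$ for the numbers of isolated fixed points of types $(1,1)$ and $(1,2)$ respectively, the Lefschetz and $G$-signature theorems collapse to
\[z_1+z_2=6+\sum_i Y_i^2,\qquad z_1-z_2=4\sum_i Y_i^2,\]
whence $z_1=3+\tfrac{5}{2}\sum_i Y_i^2$ and $z_2=3-\tfrac{3}{2}\sum_i Y_i^2$. Nonnegativity of $z_1,z_2$ together with $\sum_i Y_i^2\in 2\Z$ restricts $\sum_i Y_i^2\in\{0,2\}$.

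To rule out $\sum_i Y_i^2=2$, I would first observe that $b_2^+(M)=1$ combined with the mutual disjointness of the $Y_i$'s forces any such configuration to consist of a single component of self-intersection $2$, namely a genus-two surface, together with possibly some tori. The $G$-index theorem for the Dirac operator then gives
\[Spin(g,M)=-\frac{z_1}{3}+\frac{z_2}{3}-\frac{\sum_i Y_i^2}{6}=-\frac{3}{2}\sum_i Y_i^2,\]
so the bad case requires $Spin(g,M)=-3$, and after imposing $\sum_k d_k=0$ this forces $d_0=ind(\D_{M/G})=-2$. The contradiction would be extracted from finer input: either directly from the orbifold Atiyah-Singer computation of $ind(\D_{M/G})$, whose explicit local contributions at the eight $(1,1)$ cone singularities should be incompatible with $d_0=-2$, or by appealing to the symplectic non-existence results of Section~4 ruling out the embedding of eight disjoint symplectic $(-3)$-spheres together with a genus-two symplectic surface in $\C\P^2\#9\overline{\C\P^2}$.

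Once $\sum_i Y_i^2=0$ is established, $z_1=z_2=3$ and $z=6$, as claimed. For the last assertion, Riemann-Hurwitz applied to the quotient map $M\to M/G$ with $K_M=0$ yields $K_{|M/G|}=-\tfrac{2}{3}\sum_i\bar Y_i'$, so each sphere fixed component $Y_i$ descends to a $(-6)$-sphere in $M_G$. If two such spheres existed, $\sum_iY_i^2=0$ would force an additional $Y_j$ of positive self-intersection; the resulting disjoint configuration of two $(-6)$-spheres together with this component and the three $(-3)$-spheres coming from the $(1,1)$ resolutions, tested in a reduced basis of $M_G=\C\P^2\#10\overline{\C\P^2}$ against adjunction and the positivity constraint $c_k\le 0$ in the expansion $c_1(K_{M_G})=\sum_k c_k F_k$, fails to be realizable. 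The main obstacle throughout is the elimination of $\sum_i Y_i^2=2$, for which the elementary $G$-index numerics alone are insufficient; one must bring in either finer orbifold Atiyah-Singer corrections at the $(1,1)$ singularities or the Section~4 symplectic non-existence results.
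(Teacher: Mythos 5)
Your setup and numerics match the paper's: the Lefschetz and $G$-signature computations (with $L(g,M)=6$, $Sign(g,M)=0$) correctly reduce the problem to the two cases $(z_1,z_2,\sum_i Y_i^2)=(3,3,0)$ or $(8,0,2)$, and your evaluation $Spin(g,M)=-\frac{3}{2}\sum_i Y_i^2=-3$ in the bad case, hence $d_0=\frac{2}{3}Spin(g,M)=-2$, is exactly the paper's computation. The gap is at the one step that actually produces the contradiction: you need an \emph{independent} reason why $d_0$, the index of the Dirac operator on the spin orbifold $M/G$, cannot equal $-2$. The paper supplies this from Fukumoto--Furuta (\cite{FF}, Corollary 1): since $b_2^{+}(M/G)=b_2^{-}(M/G)=1$, the index of the Dirac operator on the spin orbifold $M/G$ must vanish, so $d_0=-2$ is impossible. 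Neither of your two proposed substitutes closes this gap. Recomputing $d_0$ "directly from the orbifold Atiyah--Singer computation" is circular: the identity $d_0=\frac{1}{3}(\mathrm{Ind}\,\D+Spin(g,M)+Spin(g^2,M))$ \emph{is} the orbifold index formula, and its local contributions at the eight type $(1,1)$ points are precisely what you already used to get $-2$; there is no internal incompatibility to exploit. And the Section~4 non-existence results concern configurations of symplectic spheres and tori in the rational resolution $M_G$ under delicate area hypotheses; they are not formulated for (and would not obviously rule out) a configuration of eight $(-3)$-spheres plus a genus-two surface, and you do not carry out such an argument.

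A smaller point: for the final assertion that at most one $Y_i$ is a sphere, you do not need the detour through $(-6)$-spheres in $M_G$ and homological realizability. Spherical components satisfy $Y_i^2=2g_i-2=-2<0$, and disjoint surfaces of negative self-intersection represent linearly independent classes spanning a negative-definite subspace; since $b_2^{-}(M)=b_1(M)-1=1$ (equivalently $b_2^{-}(M/G)=1$, as the paper phrases it), there can be at most one such component. Your sketch for this part is both heavier and unproven as written.
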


\begin{proof}
First of all, observe that $b_2^{-}(M/G)=1$ as $G=\Z_3$, and consequently, 
$$
L(g,M)=2-4\times (-\frac{1}{2})+1+1=6,\;\; Sign(g,M)=1-1=0.
$$
If we let $x,y$ be the number of isolated fixed points of $G$ which are of type $(1,1)$ and type $(1,2)$ respectively, then the Lefschetz fixed point theorem and the 
$G$-Signature theorem imply, respectively, that
$$
x+y+ \sum_i (2-2g_i)=6 \mbox{ and } -\frac{1}{3}x+ \frac{1}{3} y+\frac{4}{3}\cdot \sum_i Y_i^2=0.
$$
With $Y_i^2=2g_i-2$, we eliminate the variable $x$ and obtain $2y+3\sum_i Y_i^2=6$. On the other hand, observe that $b_2^{-}(M/G)=1$ implies that there is at most one component $Y_i$ 
such that $Y_i^2<0$ (note that these are precisely the spherical components in $\{Y_i\}$). Consequently, it is easily seen that 
$\sum_i Y_i^2\geq -2$, and with this, it follows easily that $y=0$ or $3$ are the only possibilities, where $x=8$ or $3$ and $\sum_i Y_i^2=2$ or $0$ respectively. 

It remains to eliminate the possibility that $x=8$, $y=0$ and $\sum_i Y_i^2=2$. To this end, we observe that the $G$-action is spin because the order of $G$ is an odd prime (cf.  \cite{CK2}).
Moreover, the index of the Dirac operator on the spin orbifold $M/G$ must be zero because 
$b_2^{+}(M/G)=b_2^{-}(M/G)=1$ (cf. Fukumoto-Furuta \cite{FF}, Corollary 1). We shall prove 
the index is nonzero, thus eliminating the case $x=8$, $y=0$ and $\sum_i Y_i^2=2$.

First, let $\D$ be the Dirac operator on $M$. Then as $G=\Z_3$, it follows easily from 
$\text{Index } \D=-\frac{1}{8} Sign (M)=0$ that $Spin(g,M)=\frac{3}{2} d_0$, where $d_0$ equals the index of the Dirac operator on $M/G$.

Next we compute $Spin(g,M)$ using the $G$-index theorem for Dirac operators (cf. \cite{CK2}, Lemma 3.8). In order to apply the formula for $Spin(g,M)$, we note that for a type $(1,1)$ isolated 
fixed point $q_j$, the number $k(g,q_j)=2$, and for a type $(1,2)$ isolated fixed point $q_j$, $k(g,q_j)=1$. On the other hand, it is easy to check that $k(g,Y_i)=c_i$ for any $Y_i$. 
With these understood, it follows easily that the Spin number 
$$
Spin(g,M)=-\frac{1}{3}x+\frac{1}{3}y+\sum_i(-\frac{1}{6} Y_i^2)
=-\frac{1}{3}\times 8+\frac{1}{3}\times 0-\frac{1}{6}\times 2=-3. 
$$
Consequently, $d_0=\frac{2}{3} Spin(g,M)=-2$, which is nonzero. This finishes the proof. 

\end{proof}

Finally, we consider the case where $G$ is of non-prime order $n$. We assume that $M_G$ is rational or ruled, and that for any subgroup $H$ of prime order, $M_H$ has torsion canonical class. 

First, by Lemma 2.1, the order $n$ must be a power of $2$; more precisely, $n=2^k>2$. Furthermore, $b_1(M/G)=0$, so that $M_G$ must be rational. Finally, note that the action of $G$ on 
$H^1(M;\Z)/\text{Tor}$ is given by elements of $SL(2,\Z)$. It follows easily that $n=4$. 

With the preceding understood, we fix a generator $g$ of $G$, and let $H$ be the subgroup of order $2$ generated by $h:=g^2$. Then by our assumption, $M_H$ has torsion canonical class.
By Lemma 2.1, $M^H$ consists of $8$ isolated fixed points. Since $M^G$ is contained in $M^H$, the action of $G$ has no $2$-dimensional fixed components.

To proceed further, note that there are two possibilities: $b_2^{-}(M/G)=0$ or $1$. Consider first the case where $b_2^{-}(M/G)=0$.
In this case, $L(g,M)=2-4\times 0+1-1=2$, so the $G$-action has $2$ isolated fixed points. 
Examining the induced action of $G$ on $M^H$, the remaining $6$ fixed points of $H$ are 
of isotropy of order $2$, and consequently, the orbifold $M/G$ has $5$ singular points -- two of order $4$ and three of order $2$. 
Let $x,y$ be the number of fixed points of $G$ of type $(1,1)$ and type $(1,3)$ respectively. Note that the resolution of a type $(1,1)$
fixed point in $M_G$ is a $(-4)$-sphere and the resolution of a type $(1,3)$ fixed point is a linear chain of three $(-2)$-spheres.
A point of isotropy of order $2$ gives rise to a $(-2)$-sphere in $M_G$. As a result, we have
$$
b_2^{-}(M_G)=b_2^{-}(M/G)+x+3y+3=x+3y+3.
$$
On the other hand, $c_1(K_{M_G})=\sum_i -\frac{1}{2} E_i$, where $E_i$ are the $(-4)$-spheres in $M_G$ coming from the resolution of type $(1,1)$ fixed points of $G$ (cf. \cite{C3},
Proposition 3.2). Thus $c_1(K_{M_G})^2=\sum_i \frac{1}{4} E_i^2=-x$. Since $M_G$ is rational,
we have $c_1(K_{M_G})^2=9-b_2^{-}(M_G)$, which is $-x=9-(x+3y+3)$. It follows that 
$y=2$, and $x=2-y=0$. But this is a contradiction as it implies that $c_1(K_{M_G})=0$. Hence the case where $b_2^{-}(M/G)=0$ is eliminated. 

For the case where $b_2^{-}(M/G)=1$, it is easy to see that $L(g,M)=4$, so the $G$-action has $4$ isolated fixed points. A similar calculation results 
$$
b_2^{-}(M_G)=x+3y+3 \mbox{ and } c_1(K_{M_G})^2=-x.
$$ 
It follows easily that $x=y=2$. We summarize our discussions in the following

\begin{lemma}
Suppose $M_G$ is rational or ruled, but for any subgroup $H$ of prime order, $M_H$ has torsion canonical class. Then $M_G$ must be rational, and $G$ is of order $4$.  Furthermore, the fixed-point set $M^G$ consists of $4$ isolated points, exactly two of which are of type $(1,1)$, and there are 
$4$ isolated points of isotropy of order $2$ in $M$. 
\end{lemma}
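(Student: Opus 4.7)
The plan is to reduce to the case $G=\Z_4$ by combining Lemma 2.1 with representation-theoretic input, then run a Lefschetz/signature computation together with the rationality constraint $c_1(K_{M_G})^2 = 9 - b_2^-(M_G)$ to pin down the fixed-point data.

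First I would argue that $|G|$ is a power of $2$. Since $M_G$ is rational or ruled, $M/G$ is not just isolated Du Val singularities, so by the structure recalled from \cite{C3} we need at least one prime subgroup of $G$ to contribute non-Du-Val quotient behavior somewhere; however the hypothesis forces every prime subgroup $H$ to give $M_H$ of torsion canonical class, i.e., $M/H$ has only isolated Du Val singularities. Lemma 2.1 (applied to each prime subgroup, using $b_1(M)=2$) then forces each such $H$ to have order $2$, hence $|G|$ is a power of $2$, and non-primality gives $|G|\geq 4$. Next, since $b_1(M)=2$, the action of $G$ on $H^1(M;\Z)/\text{Tor}\cong \Z^2$ preserves the cup-product pairing and so factors through $SL(2,\Z)$; the only $2$-group in $SL(2,\Z)$ of order $>2$ is $\Z_4$, so $G=\Z_4$. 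Finally, $b_1(M/G)=0$ (since no nontrivial character of $G$ on $H^1$ can have a non-zero invariant), so by the trichotomy recalled in the introduction $M_G$ must be rational.

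Now write $H=\langle g^2\rangle$ for a generator $g$ of $G$, so $|H|=2$. Lemma 2.1 applied to $H$ gives $\# M^H=8$ isolated points. Since $M^G\subseteq M^H$, the $G$-action has no $2$-dimensional fixed components, and the $8-\# M^G$ remaining points of $M^H$ are points of isotropy of order exactly $2$ for $G$. A direct computation of $L(g,M)$ using $b_2^\pm(M)=1$ and the sign of $\operatorname{tr}(g|H^{2,-})$ yields $L(g,M)=2$ if $b_2^-(M/G)=0$ and $L(g,M)=4$ if $b_2^-(M/G)=1$; by the Lefschetz fixed point theorem this equals $\# M^G$.

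The main step, and the main obstacle, is eliminating $b_2^-(M/G)=0$. Here $\# M^G=2$, leaving $6$ points of isotropy of order $2$, so $M/G$ has two cyclic quotient singularities of order $4$ (of types $(1,1)$ and $(1,3)$ in some distribution $x+y=2$) and three of order $2$. The minimal resolution of a $\Z_4$ singularity of type $(1,1)$ is a single $(-4)$-sphere, of type $(1,3)$ a linear chain of three $(-2)$-spheres, and each $\Z_2$ point contributes one $(-2)$-sphere; this gives
\[
b_2^-(M_G)=b_2^-(M/G)+x+3y+3=x+3y+3.
\]
Using the canonical class formula $c_1(K_{M_G})=\sum-\tfrac12 E_i$ summed over the $(-4)$-spheres from type $(1,1)$ points (cf. \cite{C3}, Prop.~3.2), we get $c_1(K_{M_G})^2=-x$. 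Combining with $c_1(K_{M_G})^2=9-b_2^-(M_G)$ (since $M_G$ is rational) and $x+y=2$ forces $y=2$, $x=0$, whence $c_1(K_{M_G})=0$, contradicting rationality. Hence $b_2^-(M/G)=1$, $\# M^G=4$, and now the same two equations $b_2^-(M_G)=x+3y+3+1$ and $c_1(K_{M_G})^2=-x=9-b_2^-(M_G)$ with $x+y=4$ give $x=y=2$. The $4$ remaining points of $M^H$ are the claimed points of order-$2$ isotropy, completing the lemma.
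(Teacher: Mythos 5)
Your overall strategy is exactly the paper's: reduce to $G=\Z_4$ via Lemma 2.1 and the $SL(2,\Z)$ action on $H^1$, use $M^G\subseteq M^H$ to kill $2$-dimensional components, compute $\#M^G=L(g,M)$ in the two cases $b_2^-(M/G)=0,1$, and in each case play the resolution count $b_2^-(M_G)$ against $c_1(K_{M_G})^2=9-b_2^-(M_G)$ and $c_1(K_{M_G})=-\tfrac12\sum E_i$ over the type-$(1,1)$ exceptional $(-4)$-spheres. The elimination of $b_2^-(M/G)=0$ is carried out exactly as in the paper.

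There is, however, an arithmetic slip in your final step that makes the computation fail as written. The ``$+3$'' in $b_2^-(M_G)=b_2^-(M/G)+x+3y+3$ is the number of order-$2$ singular points of $M/G$, i.e.\ the number of $G$-orbits in $M^H\setminus M^G$; each such orbit has size $|G|/|H|=2$. When $b_2^-(M/G)=0$ there are $8-2=6$ such points of $M$, hence $3$ quotient singularities of order $2$; but when $b_2^-(M/G)=1$ there are only $8-4=4$ such points, hence $2$ quotient singularities, and the correct count is $b_2^-(M_G)=1+x+3y+2=x+3y+3$ (the same total as before, which is why the paper writes ``the same equations''). Your formula $b_2^-(M_G)=x+3y+4$ double-counts: plugged into $-x=9-b_2^-(M_G)$ it gives $3y=5$, which has no integer solution, so it cannot yield the claimed $x=y=2$. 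With the corrected count one gets $3y=6$, hence $y=2$ and $x=\#M^G-y=2$, recovering the statement. This is a one-line fix, but as written the last step does not go through.
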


\subsection{The case where $b_1=3$}
Assume $M_G$ is rational or ruled. We first observe that $G$ must be $\Z_2$, which, with
$b_2^{+}(M)=b_1(M)-1=2$ and $b_2^{+}(M/G)=1$, follows easily by Lemma 2.1. 

\begin{lemma}
Suppose $M_G$ is rational or ruled. Then $G$ must be of order $2$. Moreover, 
\begin{itemize}
\item [{(i)}] if $M_G$ is irrational ruled, then the fixed-point set $M^G$ consists of a disjoint union 
of tori of self-intersection zero;
\item [{(ii)}] if $M_G$ is rational, then the fixed-point set $M^G$ contains $8$ isolated points, and 
the $2$-dimensional fixed components $\{Y_i\}\neq \emptyset$ and 
$\sum_i Y_i^2=2(1-b_2^{-}(M/G))$.
\end{itemize}
\end{lemma}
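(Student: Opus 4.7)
The plan is to first force $G \cong \Z_2$ by a representation-theoretic argument combined with Lemma 2.1, and then in each of the two cases run the Lefschetz fixed-point formula and the $G$-signature theorem to extract the fixed-point-set structure, paralleling the proofs of Lemmas 2.2 and 2.3 in the $b_1 = 2$ setting.

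To pin down $G \cong \Z_2$: since $M_G$ is rational or irrational ruled over $T^2$, $b_2^+(M_G) = 1$ and hence $b_2^+(M/G) = 1$ (resolving isolated Du Val singularities does not change $b_2^+$), while $b_2^+(M) = b_1(M) - 1 = 2$. The $G$-action on $H^{2,+}(M;\R) \cong \R^2$ therefore has a one-dimensional invariant subspace and a one-dimensional non-invariant complement, on which $G$ acts through a surjective character $\epsilon \colon G \to \{\pm 1\}$; in particular $|G|$ is even. Now suppose $|G| = 2m$ with $m \geq 2$. Then $K := \ker \epsilon$ has order $m \geq 2$ and hence contains a subgroup $P$ of some prime order $p$. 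By construction $P$ acts trivially on $H^{2,+}(M;\R)$, forcing $b_2^+(M/P) = 2$. However, $P$ acts non-freely on $M$ by the section's standing hypothesis, so $M_P$ is either rational or irrational ruled (with $b_2^+(M_P) = 1$) or of torsion canonical class (in which case Lemma 2.1 gives $b_2^+(M/P) = 1$). Every possibility contradicts $b_2^+(M/P) = 2$, so $|G| = 2$.

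For case (i), with $b_1(M_G) = 2$, the generator $g$ acts on $H^1(M;\R)$ with a two-dimensional invariant subspace and a one-dimensional $(-1)$-eigenspace, giving $\operatorname{tr}(g \mid H^1) = 1$; the reduction step gives $\operatorname{tr}(g \mid H^{2,+}) = 0$; and $\operatorname{tr}(g \mid H^{2,-}) = 2 b_2^-(M/G) - 2$. Feeding these into the Lefschetz formula (using $\operatorname{tr}(g \mid H^3) = \operatorname{tr}(g \mid H^1)$ by Poincar\'e duality) and the $G$-signature theorem for $G = \Z_2$ (in which the cotangent defects at isolated fixed points vanish) yields
\[
L(g,M) = 2 b_2^-(M/G) - 2, \qquad \operatorname{Sign}(g,M) = 2 - 2 b_2^-(M/G) = \sum_i Y_i^2.
\]
Using $Y_i^2 = 2 g_i - 2$ together with $L(g,M) = z + \sum_i (2 - 2 g_i)$ then forces $z = 0$, so $M/G = M_G$ as topological spaces, $b_2^-(M_G) = 1$, and $\sum_i Y_i^2 = 0$. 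A spherical $Y_i$ would descend to a $(-4)$-sphere in the minimal $\s^2$-bundle $M_G$ over $T^2$, which contains no such sphere, so each $Y_i$ is a torus and $\sum_i Y_i^2 = 0$ forces $Y_i^2 = 0$ for every $i$.

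For case (ii), the same computation with $b_1(M_G) = 0$ (hence $\operatorname{tr}(g \mid H^1) = -3$) produces $\sum_i Y_i^2 = 2(1 - b_2^-(M/G))$ and $z = 8$; non-emptiness of $\{Y_i\}$ is automatic, since otherwise $M/G$ would have only isolated Du Val singularities and $M_G$ would be of torsion canonical class by Lemma 4.1 of \cite{C3}, contradicting rationality. The main obstacle I expect is the reduction to $G = \Z_2$: the parity of $|G|$ is immediate from the representation theory on $H^{2,+}$, but ruling out $|G| \geq 4$ requires the two-step observation that a prime-order subgroup exists inside $\ker \epsilon$ and that its quotient $M/P$ violates the universal bound $b_2^+(M_P) = 1$ shared by Lemma 2.1 and the rational/ruled cases.
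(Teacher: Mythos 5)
Your reduction to $G\cong\Z_2$ is the argument the paper intends (it compresses it into the phrase ``follows easily by Lemma 2.1''): every prime-order subgroup $P$ satisfies $b_2^{+}(M/P)=1$, whether $M_P$ has torsion canonical class (Lemma 2.1) or is rational/ruled, while the $G$-action on $H^{2,+}(M;\R)\cong\R^2$ factors through $\{\pm 1\}$, so any subgroup of the index-two kernel would violate this. Case (ii) and the trace bookkeeping in case (i) also match the paper's computation, as does the non-emptiness argument for $\{Y_i\}$.

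There is, however, a genuine gap in case (i). After deducing $z=0$ you assert ``$b_2^{-}(M_G)=1$'' and speak of ``the minimal $\s^2$-bundle $M_G$ over $T^2$,'' but neither is justified: an irrational ruled $4$-manifold over $T^2$ may be a blow-up of an $\s^2$-bundle, and since $b_2^{-}(M)=2$ the value $b_2^{-}(M/G)=2$ is a priori possible. In that case your own formula gives $\sum_i Y_i^2=2-2b_2^{-}(M/G)=-2$, which forces a spherical component $Y_i$ with $Y_i^2=-2$ (by adjunction), and your ``no $(-4)$-sphere in a minimal $\s^2$-bundle'' remark does not apply because $M_G$ would then be the one-point blow-up of such a bundle, which \emph{does} contain smooth $(-4)$-spheres homologically. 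The paper closes this case by an explicit computation: the descendant $C$ of $Y_i$ is a symplectic $(-4)$-sphere, so writing $C=aF+bE$ with $F$ the fiber class and $E$ the exceptional class (these generate $\pi_2(M_G)$), the constraints $C^2=-4$ and $c_1(K_{M_G})\cdot C=2$ force $C=-2F+2E$ or $C=-2E$, both of negative symplectic area --- a contradiction. (The case $b_2^{-}(M/G)=0$ is also formally open in your write-up, but is immediate since $M_G=|M/G|$ is ruled and hence has $b_2^{-}\geq 1$.) Without ruling out $b_2^{-}(M/G)=2$ you cannot conclude that every $Y_i$ is a torus of self-intersection zero, so this step needs to be supplied.
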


\begin{proof}
Since $M_G$ is rational or ruled and $G=\Z_2$, $\{Y_i\}\neq \emptyset$. We denote by $z$ the number of isolated fixed points and let $1\neq g\in G$. Then by the $G$-Signature theorem, 
$$
\sum_i Y_i^2=Sign(g,M)=(1-1)-tr(g|_{H^{2,-}})=-tr(g|_{H^{2,-}}).
$$

First, consider case (i) where $M_G$ is irrational ruled. In this case, $b_1(M/G)=2$, 
so the Lefschetz fixed point theorem implies that
$$
z+\sum_i (2-2g_i)=L(g,M)=2-2\times (1+1-1)+(1-1)+ tr(g|_{H^{2,-}})= tr(g|_{H^{2,-}}).
$$
With $Y_i^2=2g_i-2$ for each $i$, it follows immediately that $z=0$. As a consequence, $M_G$ is simply
the underlying manifold of $M/G$. This immediately ruled out the possibility that $b_2^{-}(M/G)=0$, because as an irrational ruled $4$-manifold, $M_G$ has non-zero $b_2^{-}$. 

Next, assume $b_2^{-}(M/G)=1$. In this case, by the same argument as in Lemma 2.2, 
each $Y_i$ is a torus of self-intersection zero and $M_G$ is a $\s^2$-bundle over $T^2$.

Finally, we rule out the possibility that $b_2^{-}(M/G)=2$. In this case, 
$\sum_i Y_i^2=-tr(g|_{H^{2,-}})=-(1+1)=-2$, so that there
must be a $Y_i$ which is a $(-2)$-sphere. On the other hand, $b_2^{-}(M/G)=2$ implies 
that $M_G$ is a $\s^2$-bundle over $T^2$
blown up at one point. The descendent of $Y_i$ is a symplectic $(-4)$-sphere in $M_G$,
to be denoted by $C$. To derive a contradiction, let $F$ and $E$ be the fiber class and 
the exceptional $(-1)$-class of $M_G$ respectively. Note that $c_1(K_{M_G})\cdot F=-2$ and $c_1(K_{M_G})\cdot E=-1$. With this understood, 
since $\pi_2(M_G)$ is generated by $F$ and $E$, we write $C=aF+bE$. Then $-4=C^2=-b^2$ and $2=c_1(K_{M_G})\cdot C=-2a-b$, giving either $C=-2F+2E$ or $C=-2E$. Note that in both cases, 
$C$ has a negative symplectic area. Hence the possibility $b_2^{-}(M/G)=2$ is ruled out.

For case (ii) where $M_G$ is rational, $b_1(M/G)=0$. In this case, the Lefschetz number 
$L(g,M)=2-2\times (-1-1-1)+(1-1)+ tr(g|_{H^{2,-}})= 8+tr(g|_{H^{2,-}})$, which implies $z=8$.
The assertion $\sum_i Y_i^2=2(1-b_2^{-}(M/G))$ follows easily from the fact that
$tr(g|_{H^{2,-}})=2(b_2^{-}(M/G)-1)$. This finishes the proof. 

\end{proof}

\subsection{The case where $b_1=4$}
The fact that the cohomology ring $H^\ast(M;\R)$ is isomorphic to that of $T^4$ (cf. \cite{Sr})
plays a crucial role in the analysis of the fixed-point set structure in this case. In particular,
this fact has the following two corollaries: (1) it allows us to express the action of $G$ on the entire cohomology $H^\ast(M;\R)$ in terms of its action on $H^1(M;\R)$, and (2) since the Hurwitz map 
$\pi_2(M)\rightarrow H_2(M)$ has trivial image, the fixed-point set $M^G$ does not have any
spherical components. With the help of the adjunction formula, this is equivalent to the statement 
that all the $2$-dimensional fixed components have nonnegative self-intersection. 

For the first point above, to be more concrete, let $g\in G$ be any nontrivial element. 
Since the action of $g$ on $M$ is orientation-preserving, the representation of $g$ on $H^1(M;\R)$ splits into a sum of two 
complex $1$-dimensional representations. This said, there is a basis $\{\alpha_i\}$, $i=1,2,3,4$,  
of $H^1(M;\R)$ such that $\alpha_1\cup \alpha_2\cup\alpha_3\cup \alpha_4\in H^4(M;\R)$ is positive according to the natural orientation of $M$. Furthermore, we assume that the span of 
$\alpha_1,\alpha_2$ and the span of  $\alpha_3,\alpha_4$
are invariant under the action of $g$, and with respect to the orientation given by the
above order, the action of $g$ is given by a rotation of angle $\theta_1$, $\theta_2$
respectively. 

\begin{lemma}
With $g, \theta_1, \theta_2$ as given above, the following hold true:
\begin{itemize}
\item [{(1)}] $2(\cos \theta_1+\cos\theta_2), 4\cos\theta_1\cos\theta_2 \in\Z$.
\item [{(2)}] The Lefschetz number $L(g,M)=4(1-\cos\theta_1)(1-\cos\theta_2)$.
\item [{(3)}] The representation of $g$ on $H^{2,+}(M;\R)$ {\em(}resp.  $H^{2,-}(M;\R)${\em)} splits into 
a trivial $1$-dimensional representation and a $2$-dimensional one on which $g$ acts as a rotation 
of angle $\theta_1+\theta_2$ {\em(}resp. $\theta_1-\theta_2${\em)}. Consequently, 
$$Sign(g,M)=2(\cos (\theta_1+\theta_2)-\cos (\theta_1-\theta_2))=-4\sin\theta_1\sin\theta_2.$$
\end{itemize}
\end{lemma}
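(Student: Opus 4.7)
The plan is to exploit the fact that $H^\ast(M;\R)\cong H^\ast(T^4;\R)$ is generated as a graded ring by $H^1(M;\R)$. Thus the representation of $g$ on all of $H^\ast(M;\R)$ is determined by its action on $H^1(M;\R)$, which by hypothesis splits as a sum of two planar rotations with angles $\theta_1$ and $\theta_2$ in the bases $\{\alpha_1,\alpha_2\}$, $\{\alpha_3,\alpha_4\}$.

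For part (1), since $g$ preserves the lattice $H^1(M;\Z)/\text{Tor}$, the trace $\text{tr}(g|_{H^1(M;\R)})=2(\cos\theta_1+\cos\theta_2)$ must be an integer. Likewise $g$ preserves $H^2(M;\Z)/\text{Tor}$; using $H^2(M;\R)\cong\wedge^2 H^1(M;\R)$ and the identity $\text{tr}(\wedge^2 g)=\tfrac{1}{2}((\text{tr }g)^2-\text{tr }g^2)$, a short computation gives $\text{tr}(g|_{H^2(M;\R)})=4\cos\theta_1\cos\theta_2+2$, which must also be an integer. For part (2), Poincar\'e duality and the fact that $g$ preserves orientation yield $\text{tr}(g|_{H^3})=\text{tr}(g|_{H^1})$, and summing with appropriate signs the Lefschetz formula becomes
\[
L(g,M)=1-2(\cos\theta_1+\cos\theta_2)+(4\cos\theta_1\cos\theta_2+2)-2(\cos\theta_1+\cos\theta_2)+1,
\]
which simplifies to $4(1-\cos\theta_1)(1-\cos\theta_2)$.

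For part (3), I expand $H^2(M;\R)=\wedge^2 H^1(M;\R)$ in the basis $\{\alpha_i\wedge\alpha_j\}_{i<j}$. With $\alpha_1\wedge\alpha_2\wedge\alpha_3\wedge\alpha_4>0$ as orientation class, the intersection form is block-diagonal in three hyperbolic planes, paired as $(\alpha_1\wedge\alpha_2,\alpha_3\wedge\alpha_4)$, $(\alpha_1\wedge\alpha_3,\alpha_2\wedge\alpha_4)$ and $(\alpha_1\wedge\alpha_4,\alpha_2\wedge\alpha_3)$. Decomposing into $g$-invariant pieces, the first plane is fixed pointwise by $g$; the plane spanned by $\alpha_1\wedge\alpha_3-\alpha_2\wedge\alpha_4$ and $\alpha_1\wedge\alpha_4+\alpha_2\wedge\alpha_3$ is $g$-invariant and a direct calculation shows $g$ acts there as a rotation by $\theta_1+\theta_2$; similarly, on the plane spanned by $\alpha_1\wedge\alpha_3+\alpha_2\wedge\alpha_4$ and $\alpha_1\wedge\alpha_4-\alpha_2\wedge\alpha_3$, $g$ acts as a rotation by $\theta_1-\theta_2$. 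Evaluating the intersection form on these, the first rotation plane is positive definite (each generator has self-intersection $+2$), the second is negative definite, and the fixed plane has signature $(1,1)$ with $\alpha_1\wedge\alpha_2\pm\alpha_3\wedge\alpha_4$ the positive/negative generators. Consequently, $H^{2,+}(M;\R)$ splits as a trivial $1$-dim rep plus the rotation by $\theta_1+\theta_2$, while $H^{2,-}(M;\R)$ splits as a trivial $1$-dim rep plus the rotation by $\theta_1-\theta_2$. Taking traces and using $\cos(A+B)-\cos(A-B)=-2\sin A\sin B$ gives $Sign(g,M)=2\cos(\theta_1+\theta_2)-2\cos(\theta_1-\theta_2)=-4\sin\theta_1\sin\theta_2$.

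The main subtlety is the compatibility of the $g$-invariant decomposition of $\wedge^2 H^1(M;\R)$ with the positive/negative splitting for the intersection form: one needs to verify that in each $g$-invariant $2$-plane orthogonal to the fixed hyperbolic plane, the intersection form happens to be \emph{definite} (rather than indefinite), so that the rotation plane lies entirely inside $H^{2,\pm}$. This is a direct but sign-sensitive linear algebra check in the $\alpha_i\wedge\alpha_j$-basis; once it is settled, everything else is routine manipulation of traces.
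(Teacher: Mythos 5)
Your proposal is correct and follows essentially the same route as the paper: the same identification of $H^2(M;\R)$ with $\wedge^2 H^1(M;\R)$, the same self-dual/anti-self-dual basis $\alpha_1\wedge\alpha_2\pm\alpha_3\wedge\alpha_4$, $\alpha_1\wedge\alpha_3\mp\alpha_2\wedge\alpha_4$, $\alpha_1\wedge\alpha_4\pm\alpha_2\wedge\alpha_3$, and the same rotation-angle bookkeeping. The only cosmetic differences are that you get the $H^2$-trace from the identity $\mathrm{tr}(\wedge^2 g)=\tfrac12((\mathrm{tr}\,g)^2-\mathrm{tr}\,g^2)$ rather than by writing out the matrix of $g$ on the $\alpha_i\cup\alpha_j$, and that you deduce $4\cos\theta_1\cos\theta_2\in\Z$ from integrality of $\mathrm{tr}(g|_{H^2})$ rather than of $L(g,M)$; both are equivalent.
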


\begin{proof}
Let $\gamma_1:=\alpha_1\cup\alpha_3$, $\gamma_2:=\alpha_1\cup\alpha_4$, 
$\gamma_3:=\alpha_2\cup\alpha_3$, and $\gamma_4:=\alpha_2\cup\alpha_4$. Then
a straightforward calculation gives 
$$
g\cdot (\alpha_1\cup\alpha_2)=\alpha_1\cup\alpha_2,\; g\cdot (\alpha_3\cup\alpha_4)=\alpha_3\cup\alpha_4,
$$
$$
g\cdot \gamma_1=\cos\theta_1\cos\theta_2\gamma_1+\cos\theta_1\sin\theta_2\gamma_2+
\sin\theta_1\cos\theta_2\gamma_3+\sin\theta_1\sin\theta_2\gamma_4,
$$
$$
g\cdot \gamma_2=-\cos\theta_1\sin\theta_2\gamma_1+
\cos\theta_1\cos\theta_2\gamma_2-\sin\theta_1\sin\theta_2\gamma_3+\sin\theta_1\cos\theta_2\gamma_4,
$$
$$
g\cdot \gamma_3=-\sin\theta_1\cos\theta_2\gamma_1-\sin\theta_1\sin\theta_2\gamma_2+
\cos\theta_1\cos\theta_2\gamma_3+\cos\theta_1\sin\theta_2\gamma_4,
$$
and
$$
g\cdot \gamma_4=\sin\theta_1\sin\theta_2\gamma_1-\sin\theta_1\cos\theta_2\gamma_2
-\cos\theta_1\sin\theta_2\gamma_3+\cos\theta_1\cos\theta_2\gamma_4.
$$
The action on $H^3(M;\R)$ can be similarly determined. From these calculations we deduce easily that 
$$
L(g,M)=2-4(\cos\theta_1+\cos\theta_2)+(2+4\cos\theta_1\cos\theta_2)=4(1-\cos\theta_1)
(1-\cos\theta_2).
$$
In order to understand the action of $g$ on $H^{2,+}(M;\R)$ and $H^{2,-}(M;\R)$, and to 
compute $Sign(g,M)$, we note that $H^{2,+}(M;\R)$ is spanned by
$\beta_i$, $i=1,2,3$, where
$$
\beta_1=\alpha_1\cup\alpha_2+\alpha_3\cup\alpha_4,\;\;
\beta_2=\alpha_1\cup\alpha_3-\alpha_2\cup\alpha_4,\;\;
\beta_3=\alpha_1\cup\alpha_4+\alpha_2\cup\alpha_3.
$$
Likewise, $H^{2,-}(M;\R)$ is spanned by $\beta_i^\prime$, $i=1,2,3$, where
$$
\beta_1^\prime=\alpha_1\cup\alpha_2-\alpha_3\cup\alpha_4,\;\;
\beta_2^\prime=\alpha_1\cup\alpha_3+\alpha_2\cup\alpha_4,\;\;
\beta_3^\prime=\alpha_1\cup\alpha_4-\alpha_2\cup\alpha_3.
$$
With this understood, the action of $g$ on $H^{2,+}(M;\R)$ and $H^{2,-}(M;\R)$ is as follows:
both $\beta_1$ and $\beta_1^\prime$ are fixed by $g$, and $g$ acts on the span of
$\beta_2,\beta_3$ and the span of $\beta_2^\prime,\beta_3^\prime$ as a rotation of angle
$\theta_1+\theta_2$, $\theta_1-\theta_2$ respectively. It follows in particular that
$Sign(g,M):=tr(g|_{H^{2,+}})-tr(g|_{H^{2,-}})$ is given by 
$$
Sign(g,M)=2(\cos (\theta_1+\theta_2)-\cos (\theta_1-\theta_2))=-4\sin\theta_1\sin\theta_2.
$$
Finally, note that $tr(g|_{H^1(M;\R)})=2(\cos \theta_1+\cos\theta_2)$, hence 
$2(\cos \theta_1+\cos\theta_2)\in\Z$. With this, $L(g,M)=4(1-\cos\theta_1)(1-\cos\theta_2)\in \Z$
implies that $4\cos\theta_1\cos\theta_2 \in\Z$ as well.  This completes the proof of the lemma.

\end{proof}

With Lemma 2.7 at hand, we shall first examine the fixed-point set structure when 
$G$ is of prime order.

\begin{lemma}
Suppose $G$ is of prime order $p>1$. Then the following hold true.
\begin{itemize}
\item [{(1)}] Either $b_2^{+}(M/G)=1$ or $b_2^{+}(M/G)=3$. Moreover, $M_G$ has torsion canonical 
class if and only if $b_2^{+}(M/G)=3$ and $b_1(M/G)=0$. 
\item [{(2)}] If $M_G$ has torsion canonical class, then $p=2$ or $p=3$, where in the former case,
the fixed-point set $M^G$ consists of $16$ isolated points, and in the latter case, $M^G$ consists of
$9$ isolated points of type $(1,2)$. 
\item [{(3)}] If $M_G$ is irrational ruled, then $M^G$ consists of a disjoint union of tori of self-intersection zero.
\item [{(4)}] If $M_G$ is rational, then $p\neq 2$ and $p\leq 5$.
\end{itemize}
\end{lemma}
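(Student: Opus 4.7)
I would proceed by a case-by-case analysis driven by Lemma 2.7 and the trichotomy of $M_G$ (torsion canonical, rational, or ruled over $T^2$) established earlier. For part (1), the dichotomy $b_2^\pm(M/G)=\dim H^{2,\pm}(M;\R)^G\in\{1,3\}$ is immediate from Lemma 2.7(3): the trivial summand contributes $1$ dimension and the $2$-dimensional rotation contributes $0$ or $2$, depending on whether $\theta_1\pm\theta_2\equiv 0\pmod{2\pi}$. Since a rational or ruled surface has $b_2^+(M_G)=1$ and $b_2^+(M_G)=b_2^+(M/G)$, the value $b_2^+(M/G)=3$ automatically forces $M_G$ to be torsion canonical. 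For the converse, the allowed $(b_1(M_G),b_2^+(M_G))$ with $M_G$ torsion canonical are $(0,1)$ (Enriques), $(0,3)$ (K3), and $(b_1,b_1-1)$ for $b_1\in\{2,3,4\}$; the dichotomy $b_2^+\in\{1,3\}$ already eliminates $(3,2)$. Using the identity $\chi(M_G)=\frac{p^2-1}{p}\# M^G$ (obtained from $\chi(M)=p\chi(M/G)-(p-1)\chi(M^G)=0$ together with the contribution $p-1$ per resolved $A_{p-1}$ singularity), the cases $(2,1)$ and $(4,3)$ have $\chi(M_G)=0$ and therefore force $\# M^G=0$, contradicting $M^G\neq\emptyset$; the Enriques case $(0,1)$ with $\chi(M_G)=12$ requires $\# M^G=12p/(p^2-1)$, which is non-integer for odd primes and, for $p=2$, mismatches the Lefschetz count $L(g,M)=16$. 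Only $(0,3)$ survives.

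For part (2), combining (1) with Lemma 2.7(1) yields $\theta_2\equiv -\theta_1\pmod{2\pi}$ and $4\cos\theta_1=2(\cos\theta_1+\cos\theta_2)\in\Z$. Since $2\cos(2\pi/p)$ is an algebraic integer of degree $(p-1)/2$ over $\Q$, this rationality forces $p\in\{2,3\}$. The Lefschetz formula then gives $L(g,M)=16\sin^4(\theta_1/2)$, producing $\# M^G=16$ for $p=2$ (all of type $(1,1)$, the only option) and $\# M^G=9$ for $p=3$ (all of Du Val type $(1,2)$, as required by torsion canonicity).

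For part (3), $M_G$ irrational ruled means $b_1(M/G)=2$ and $b_2^+(M/G)=1$, so exactly one of $\theta_1,\theta_2$ vanishes; take $\theta_2=0$, whence $L(g,M)=0=Sign(g,M)$. The $H^*(M;\R)\cong H^*(T^4;\R)$ constraint from $b_1=4$ prohibits spherical fixed components, so each $Y_i$ satisfies $Y_i^2\geq 0$ and $\chi(Y_i)=-Y_i^2$, reducing the Lefschetz equation to $z=\sum_i Y_i^2\geq 0$. If $z>0$, then since $b_2^+(M/G)=1$ at most one component has positive self-intersection; choosing the generator so that $c_1=1$ for this component and using the strict bound $\cot(a_j\pi/p)\cot(b_j\pi/p)<\csc^2(\pi/p)$, the $G$-signature theorem gives
$$
Sign(g,M)>\sum_i\bigl(\csc^2(c_i\pi/p)-\csc^2(\pi/p)\bigr)Y_i^2\geq 0,
$$
contradicting $Sign(g,M)=0$. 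Hence $z=0$, all $Y_i$ are tori, and $Y_i^2=0$. This positivity argument, modeled on Lemma 2.2, is the main technical step.

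For part (4), $M_G$ rational gives $\theta_1,\theta_2\neq 0$ and $\theta_1+\theta_2\not\equiv 0\pmod{2\pi}$. For $p=2$ the only nontrivial action is $\theta_1=\theta_2=\pi$, which forces $\theta_1+\theta_2\equiv 0$, contradiction; hence $p\neq 2$. For $p\geq 7$, Lemma 2.7(1) requires $2(\cos\theta_1+\cos\theta_2)\in\Z$: if $\cos\theta_1=\cos\theta_2$ this reduces to $4\cos\theta_1\in\Z$, impossible as in (2); if $\cos\theta_1\neq\cos\theta_2$, then $\{\cos\theta_1,\cos\theta_2\}$ is a two-element subset of the Galois orbit $\{\cos(2\pi k/p):1\leq k\leq(p-1)/2\}$ of size $(p-1)/2\geq 3$, which is not $\mathrm{Gal}(\Q(\cos(2\pi/p))/\Q)$-invariant by transitivity, so $\cos\theta_1+\cos\theta_2\notin\Q$. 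This leaves $p\in\{3,5\}$, completing the lemma; the Galois step here is the only non-routine ingredient outside of (3).
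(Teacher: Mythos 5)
Your proposal reaches all four conclusions, follows the paper's route for parts (1) and (3) (the dichotomy from Lemma 2.7(3), the Euler-characteristic identity $\chi(M_G)=\frac{p^2-1}{p}\#M^G$ from Lemma 2.1, and the positivity argument of Lemma 2.2 transplanted to $b_1=4$ using the absence of spherical components), but takes a genuinely different route for parts (2) and (4). There the paper argues numerically: for (2) it solves $\frac{p^2-1}{p}\#M^G=24$ to get $p\in\{2,3,5\}$ and then eliminates $p=5$ with the weak $G$-signature theorem (five type $(1,4)$ points would force $Sign(M/G)=4>b_2^{+}(M/G)$), and for (4) it bounds $L(g,M)\leq 7$ and uses the divisibility $p\mid L(g,M)$ from $p\cdot\chi(M/G)=(p-1)L(g,M)$. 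You instead squeeze everything out of the integrality condition $2(\cos\theta_1+\cos\theta_2)\in\Z$ of Lemma 2.7(1); where it works this is cleaner and avoids the signature-defect computation entirely.

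The one step that needs repair is the Galois argument in part (4). The principle you invoke --- that a two-element subset of a Galois orbit which is not stable under the Galois group cannot have rational sum --- is false in general: $\{\sqrt{2}+\sqrt{3},\,-\sqrt{2}-\sqrt{3}\}$ is a non-invariant two-element subset of the four-element orbit of $\sqrt{2}+\sqrt{3}$, yet its sum is $0$. The conclusion you want is nevertheless true, for a reason specific to roots of unity: writing $\zeta=e^{2\pi i/p}$, the quantity $2(\cos\theta_1+\cos\theta_2)=\zeta^{k_1}+\zeta^{-k_1}+\zeta^{k_2}+\zeta^{-k_2}$ is a subset sum over at most four elements of $(\Z/p\Z)^*$ (the four exponents are distinct once $\cos\theta_1\neq\cos\theta_2$ and $p$ is odd), and since the only $\Q$-linear relation among $1,\zeta,\dots,\zeta^{p-1}$ is $\sum_k\zeta^k=0$, a nonempty proper subset sum is never rational; this forces $p-1\leq 4$. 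Equivalently, averaging over the degree-$\frac{p-1}{2}$ Galois group shows that if $\cos\theta_1+\cos\theta_2\in\Q$ then it equals $-2/(p-1)$, and integrality of twice this forces $(p-1)\mid 4$. With that repair your part (4) is complete; the degree count you use in part (2) is fine as stated, since there $\cos\theta_2=\cos\theta_1$ and you only need a single conjugate $4\cos\theta_1$ to be rational.
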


\begin{proof}
For (1), note that by Lemma 2.7, $b_2^{+}(M/G)=3$ if and only if $\theta_1+\theta_2=2\pi$
for a generator $g$ of $G$. If $\theta_1+\theta_2\neq 2\pi$, then $b_2^{+}(M/G)=1$.
Hence either $b_2^{+}(M/G)=1$ or $b_2^{+}(M/G)=3$ as claimed. It remains to show that
if $M_G$ has torsion canonical class, then $b_2^{+}(M/G)\neq 1$ but $b_1(M/G)=0$.
To see this, suppose $M_G$ has torsion canonical class. Then the same argument as in 
Lemma 2.1 shows that $\chi(M_G)=12$ or $24$, and $b_1(M/G)=0$. If $b_2^{+}(M/G)=1$, 
then $\chi(M_G)=12$, and as in Lemma 2.1, $p=2$ must be true. 
With $p=2$ and $b_1(M/G)=0$, the angles $\theta_1,\theta_2$ in Lemma 2.7 must be both 
equal to $\pi$. But this implies that $b_2^{+}(M/G)=3$, contradicting the assumption of 
$b_2^{+}(M/G)=1$. Hence part (1) is proved. 

Part (2) follows readily from the same argument as in Lemma 2.1. Note that when $\chi(M_G)=24$, $p=2,3$ or $5$. The case of $p=5$ can be further eliminated by the (weak version) $G$-signature theorem. 

For part (3), if $M_G$ is irrational ruled, then $b_1(M/G)=2$. This means that in Lemma 2.7,
one of the angles $\theta_1,\theta_2$ must be $0$. As a corollary, $L(g,M)=Sign(g,M)=0$
for any nontrivial element $g\in G$, and $b_2^{-}(M/G)=1$. With this understood, part (3) follows
by the same argument as in Lemma 2.2. 

Finally, for part (4) we assume $M_G$ is rational. Then $b_2^{+}(M/G)=1$ and $b_1(M/G)=0$,
so that by Lemma 2.7, $p\neq 2$. On the other hand, assume $p\geq 5$. We fix a generator 
$g\in G$ such that in Lemma 2.7, the angles $\theta_1=\frac{2\pi}{p}$ and $\theta_2=\frac{2q\pi}{p}$ for some $0<q<p-1$ (note that $q\neq p-1$ as $b_2^{+}(M/G)=1$). 
Then it follows easily from $p\geq 5$ that 
$L(g,M)=4(1-\cos \theta_1)(1-\cos \theta_2)$ satisfies the bound
$L(g,M)\leq 7$. With this understood, we appeal to the following version of Lefschetz fixed
point theorem 
$$
p\cdot \chi(M/G)=\chi(M)+(p-1)\cdot L(g,M),
$$
where $\chi(M)=0$ and $L(g,M)\in \Z$. It follows easily that $L(g,M)$ is divisible by $p$, and 
with $p\geq 5$ and $L(g,M)\leq 7$, we have $L(g,M)=p$. A further examination easily removes the possibility that $p=7$. Hence $p\leq 5$. This finishes the proof of the lemma.

\end{proof}

In the next two lemmas, we shall determine the fixed-point set structure where $M_G$ is rational and $G=\Z_3$ or $\Z_5$.
Let $g\in G$ be a generator.

\begin{lemma}
Assume $M_G$ is rational and $G=\Z_3$. Then the fixed-point set $M^G$ consists of $9$ isolated points 
of type $(1,1)$, plus possible $2$-dimensional components $\{Y_i\}$ which are tori of self-intersection zero. 
\end{lemma}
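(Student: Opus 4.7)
The plan is to combine the constraints from Lemma 2.7 (specialised to $G=\Z_3$) with the Lefschetz fixed point theorem and the $G$-signature theorem, exploiting the crucial observation from the beginning of Section 2.3 that when $b_1(M)=4$ the $2$-dimensional fixed components must all have nonnegative self-intersection (so in particular no spherical components appear).

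First, I would pin down the angles $\theta_1,\theta_2$ of the action on $H^1(M;\R)$. By Lemma 2.8(1), since $M_G$ is rational we have $b_2^{+}(M/G)=1$ and $b_1(M/G)=0$. For $G=\Z_3$ this means $\theta_1,\theta_2\in\{2\pi/3,4\pi/3\}$ with $\theta_1+\theta_2\neq 2\pi$, so replacing $g$ by its inverse if necessary I may take $\theta_1=\theta_2=2\pi/3$. Then Lemma 2.7 gives
$$
L(g,M)=4(1-\cos(2\pi/3))^2=9,\qquad Sign(g,M)=-4\sin^2(2\pi/3)=-3.
$$

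Next, let $x,y$ be the numbers of isolated fixed points of type $(1,1)$ and $(1,2)$ respectively, and set $S:=\sum_i Y_i^2=\sum_i(2g_i-2)$ (using $c_1(K_M)=0$ and adjunction). The Lefschetz fixed point theorem yields
$$
x+y-S=9,
$$
while the $G$-signature theorem, using $\cot^2(\pi/3)=1/3$, $\cot(\pi/3)\cot(2\pi/3)=-1/3$ and $\csc^2(\pi/3)=\csc^2(2\pi/3)=4/3$, gives
$$
-\tfrac{1}{3}x+\tfrac{1}{3}y+\tfrac{4}{3}S=-3,\qquad\text{i.e.,}\qquad -x+y+4S=-9.
$$
Solving the two linear equations I get $x=9+\tfrac{5}{2}S$ and $y=-\tfrac{3}{2}S$.

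Now I apply the constraint of Section 2.3: because the Hurwitz image $\pi_2(M)\to H_2(M)$ is trivial, every $2$-dimensional component $Y_i$ has $g_i\geq 1$, hence $Y_i^2\geq 0$, so $S\geq 0$. But $y\geq 0$ forces $S\leq 0$, therefore $S=0$. Since each $Y_i^2\geq 0$ and their sum is zero, every $Y_i$ has $Y_i^2=0$ and is thus a torus of self-intersection zero. Plugging back, $y=0$ and $x=9$, so $M^G$ consists of exactly $9$ isolated fixed points, all of type $(1,1)$, together with possibly some tori of self-intersection zero. There is no real obstacle here; the only subtle point is arranging the normalisation $\theta_1=\theta_2=2\pi/3$ (as opposed to mixed angles summing to $2\pi$), which is precisely what Lemma 2.8(1) rules out in the rational case.
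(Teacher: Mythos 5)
Your proposal is correct and follows essentially the same route as the paper: the same normalization $\theta_1=\theta_2=2\pi/3$ via Lemma 2.7/2.8, the same two equations $x+y-\sum_i Y_i^2=9$ and $-\tfrac13 x+\tfrac13 y+\tfrac43\sum_i Y_i^2=-3$, and the same appeal to $Y_i^2\geq 0$ from the $b_1=4$ cohomology-ring constraint to force $\sum_i Y_i^2=0$, $y=0$, $x=9$. The only cosmetic difference is that you solve for $x,y$ in terms of $S$ while the paper eliminates $\sum_i Y_i^2$ and enumerates the two integer solutions; both give the same conclusion.
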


\begin{proof}
We observe that since $M_G$ is rational, $b_1(M/G)=0$, which implies that the angles 
$\theta_1,\theta_2$ in Lemma 2.7 are both nonzero. Furthermore, $b_2^{+}(M/G)=1$ and 
$G=\Z_3$, which implies $\theta_1=\theta_2$. It follows easily that $L(g,M)=9$
and $Sign(g,M)=-3$. 

With this understood, let $x,y$ be the number of isolated fixed points of type $(1,1)$ and 
type $(1,2)$ respectively. Then the Lefschetz fixed point theorem and  the $G$-signature theorem
imply that 
$$
x+y-\sum_i Y_i^2=9 \mbox{ and } -\frac{1}{3}x+\frac{1}{3}y+\frac{4}{3}\sum_i Y_i^2=-3. 
$$
Combining the two equations, we get $x+\frac{5}{3}y=9$. It is easy to see that the solutions are
$x=9,y=0$ or $x=4,y=3$. In the former case, $\sum_i Y_i^2=0$, while in the latter case,
$\sum_i Y_i^2=-2$. The latter case is not possible since $Y_i^2\geq 0$ for all $i$.
For the same reason, we must have $Y_i^2=0$ for all $i$ in the former case. By the
adjunction formula, each $Y_i$ is a torus. This finishes the proof. 

\end{proof}

\begin{lemma}
Assume $M_G$ is rational and $G=\Z_5$. Then the fixed-point set $M^G$ consists of $5$ isolated 
points of type $(1,2)$, plus possible $2$-dimensional components $\{Y_i\}$ which are
tori of self-intersection zero.
\end{lemma}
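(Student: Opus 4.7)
The plan is to mimic the three-step machine of Lemma 2.9 — Lefschetz fixed point theorem, the weak $G$-signature theorem, and Fukumoto-Furuta's spin Dirac index vanishing — supplemented at the end by an analysis of the character through which $G$ acts on $K_M$. As preparation, since $M_G$ is rational we have $b_1(M/G)=0$, so the action of $\Z_5$ on the $4$-dimensional $\Q$-vector space $H^1(M;\Q)$ has no invariants and must therefore be the rational cyclotomic representation; via Lemma 2.7 this forces the rotation angles to be $\theta_1=2\pi/5$, $\theta_2=4\pi/5$ (in some order), so that $L(g,M)=5$. A standard Euler-characteristic-plus-signature accounting using $M_G=\C\P^2\# N\overline{\C\P^2}$, the negative-definite exceptional divisors, and $\chi(M/G)=4$ from weak Lefschetz, also gives $b_2^{\pm}(M/G)=1$ and $Sign(M/G)=0$.

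Let $x$, $y$, $z$ denote the numbers of isolated fixed points of types $(1,1)$, $(1,4)$, $(1,2)$ respectively; by the observation at the start of subsection 2.3, each $Y_i^2\geq 0$. The Lefschetz theorem for $g$ reads $x+y+z-\sum_i Y_i^2=5$. The $G$-signature defects for $p=5$ depend only on the type of an isolated fixed point, taking the values $-4,+4,0$ for types $(1,1),(1,4),(1,2)$, so combined with $Sign(M/G)=0$ the weak $G$-signature theorem gives $-x+y+2\sum_i Y_i^2=0$. For the third equation I would apply \cite{FF} to conclude $d_0=0$ from $b_2^{+}(M/G)=b_2^{-}(M/G)=1$, and hence $\sum_{k=1}^4 Spin(g^k,M)=0$. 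I expect the principal obstacle to be the type-summed spin calculation via Lemma 3.8 of \cite{CK2}: unlike the $G$-signature defect, an individual Spin contribution depends on the precise weights $\{a,b\}$ through the parity $(-1)^{k(g,q)}$, not merely on the type. What rescues the argument is that summing over all four generators traverses the full $\Z_5^{*}$-orbit of the weights at each fixed point; the alternating signs then cancel on the orbit of type $(1,2)$ while reinforcing on those of types $(1,1)$ and $(1,4)$, and the surviving terms simplify to contributions $-2,+2,0$ from isolated points (by types) and $-Y_i^2$ from each $Y_i$, yielding $-2x+2y-\sum_i Y_i^2=0$.

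Combining the three linear equations forces $\sum_i Y_i^2=0$ and $x=y$, so each $Y_i$ is a torus of self-intersection zero by adjunction, and $2x+z=5$, leaving the possibilities $(x,y,z)\in\{(0,0,5),(1,1,3),(2,2,1)\}$. To eliminate the last two, I would appeal to the character $\chi\colon G\to\C^{*}$ by which $G$ acts on the trivialization of $K_M$. This $\chi$ takes the constant value $\zeta_5^{-(a+b)}$ at every isolated fixed point with weights $\{a,b\}$; and since $M_G$ is rational, by \cite{C3} Lemma 4.1 the orbifold $M/G$ does not have trivial canonical class, so $\chi\neq 1$. A type $(1,4)$ fixed point has $a+b\equiv 0\pmod 5$ and would force $\chi=1$; hence $y=0$, and then $x=y=0$ together with $z=5$ completes the proof.
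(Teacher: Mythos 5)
Your first two steps are sound and, up to reorganization, coincide with the paper's argument: the Lefschetz computation $x+y+z-\sum_i Y_i^2=5$, the weak $G$-signature relation $-x+y+2\sum_i Y_i^2=0$, and the Fukumoto--Furuta/Dirac relation $2(y-x)=\sum_i Y_i^2$ (your claimed type-summed contributions $-2,+2,0$ and $-Y_i^2$ do check out against the group I/II bookkeeping in the paper) together force $\sum_i Y_i^2=0$, $x=y$, and $2x+z=5$, leaving exactly the three possibilities $(x,y,z)\in\{(0,0,5),(1,1,3),(2,2,1)\}$. These are the paper's cases (6), (5), (3).

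The final step is where the proof breaks. The claim that $G$ acts on ``the trivialization of $K_M$'' by a character $\chi$ whose value at each isolated fixed point is $\zeta_5^{-(a+b)}$ --- so that $a+b\bmod 5$ is constant over $M^G$ --- is not available in the symplectic category. For a symplectic Calabi--Yau $4$-manifold, $K_M$ is only topologically trivial; there is no preferred (let alone $G$-invariant) trivialization, and a $G$-line bundle with vanishing first Chern class need not be equivariantly isomorphic to a product $M\times\C_\chi$. The holomorphic version of this argument requires a nonzero holomorphic volume form ($p_g>0$), which you cannot assume here since $M$ is not known to carry a complex structure. In fact the paper itself furnishes a counterexample to the constancy you need: in Theorem 1.2(3)(ii)(a)--(b) a $\Z_3$-action with $M_G$ rational has three fixed points of type $(1,1)$ (where $a+b\not\equiv 0$) together with three of type $(1,2)$ (where $a+b\equiv 0\bmod 3$), and Remark (3) asserts these structures are realized holomorphically (on hyperelliptic surfaces, where $H^0(K)=0$). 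So mixed Du Val and non--Du Val isolated points can coexist with $M_G$ rational, and your elimination of $(1,1,3)$ and $(2,2,1)$ does not go through. The paper instead rules out these two cases by a substantially heavier argument: it applies the full (non-weak) $G$-signature theorem to $g$ and $g^2$ to obtain the constraint $2(y_1-y_2+x_2-x_1)+z_2-z_1=-5$ on the group I/II distribution of weights, feeds this into the Spin number formula to deduce $d_k=0$ for all $k$, and then invokes Nakamura's Mod $p$ vanishing theorem for Seiberg--Witten invariants, which contradicts Taubes' nonvanishing result for the canonical $Spin^c$ structure. You would need some substitute of this strength to close the gap.
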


\begin{proof}
We shall first apply the Lefschetz fixed point theorem and the weak version of the $G$-signature
theorem. To this end, recall from the proof of Lemma 2.8(4), that $L(g,M)=5$ and $\chi(M/G)=4$.
The latter easily implies that $Sign(M/G)=0$. On the other hand, note that the signature defect for an isolated fixed point of type $(1,1)$, $(1,2)$ (the same as $(1,3)$) and $(1,4)$ is $-4$, $0$, $4$ respectively (cf. \cite{CK1}). Thus if we let $x,y,z$ be the number of fixed points of type $(1,1)$, $(1,4)$ and $(1,2)$ respectively, then
$$
x+y+z-\sum_i Y_i^2=5 \mbox{ and} -4x+4y+\sum_i \frac{5^2-1}{3}Y_i^2=0.
$$
Combining the two equations, we have $x+3y+2z=10$. Note that $x+y+z$ must be odd,
because $\sum_iY_i^2=\sum_i (2g_i-2)$ is even. It follows that $z$ must be odd. 
The solutions of $x,y,z$ and $\sum_i Y_i^2$ are listed below:
\begin{itemize}
\item [{(1)}] $x=8, y=0,z=1$, and $\sum_i Y_i^2=4$,
\item [{(2)}] $x=5, y=1,z=1$, and $\sum_i Y_i^2=2$,
\item [{(3)}] $x=2, y=2,z=1$, and $\sum_i Y_i^2=0$,
\item [{(4)}] $x=4, y=0,z=3$, and $\sum_i Y_i^2=2$,
\item [{(5)}] $x=1, y=1,z=3$, and $\sum_i Y_i^2=0$,
\item [{(6)}] $x=0, y=0,z=5$. and $\sum_i Y_i^2=0$.
\end{itemize}

Next we shall first eliminate cases (1),(2), and (4) where $\sum_i Y_i^2\neq 0$ by 
computing with the $G$-index theorem for Dirac operators, using the formula for the Spin number
$Spin(g,M)$ in Lemma 3.8 of \cite{CK2}. To this end, we divide the isolated fixed points $\{q_j\}$ 
of each type and the fixed components $\{Y_i\}$ into two groups, I and II, according to the following rule: for type $(1,1)$, group I consists of fixed points $q_j$ with $(a_j,b_j)=(1,1)$ or $(4,4)$ (and the rest are group II), for type $(1,4)$, a fixed point $q_j$ belongs to group I if $(a_j,b_j)=(1,4)$, and 
to group II if $(a_j,b_j)=(2,3)$, and for type $(1,2)$, group I consists of fixed points $q_j$ with 
$(a_j,b_j)=(1,2)$ or $(3,4)$, and group II consists of fixed points $q_j$ with $(a_j,b_j)=(2,4)$ 
or $(1,3)$, and finally, for a fixed component $Y_i$, it belongs to group I if and only if $c_i=1$
or $4$. With this understood, the contribution to the Spin number $Spin(g,M)$ from an isolated
fixed point $q_j$ takes values as follows:
\begin{itemize}
\item $-\frac{1}{4}\csc^2\frac{\pi}{5}$ if $q_j$ is in group I and of type $(1,1)$,
\item $-\frac{1}{4}\csc^2\frac{2\pi}{5}$ if $q_j$ is in group II and of type $(1,1)$,
\item $\frac{1}{4}\csc^2\frac{\pi}{5}$ if $q_j$ is in group I and of type $(1,4)$,
\item $\frac{1}{4}\csc^2\frac{2\pi}{5}$ if $q_j$ is in group II and of type $(1,4)$,
\item $\frac{1}{4}\csc\frac{\pi}{5}\csc\frac{2\pi}{5}$ if $q_j$ is in group I and of type $(1,2)$,
\item $-\frac{1}{4}\csc\frac{\pi}{5}\csc\frac{2\pi}{5}$ if $q_j$ is in group II and of type $(1,2)$,
\end{itemize}
and the contribution from a fixed component $Y_i$ takes values as follows:
\begin{itemize}
\item $-\frac{1}{4}Y_i^2\csc\frac{\pi}{5}\cot\frac{\pi}{5}$ if $Y_i$ is in group I, 
\item $\frac{1}{4}Y_i^2\csc\frac{2\pi}{5}\cot\frac{2\pi}{5}$ if $Y_i$ is in group II.
\end{itemize}
If we denote by $x_k$, $y_k$, $z_k$, for $k=1,2$, the number of fixed points $q_j$ belonging to
group I, II, of type $(1,1)$, $(1,4)$, and $(1,2)$ respectively, and we denote by $w_1,w_2$
the sum of $Y_i^2$ for $Y_i$ belonging to group I, II respectively, 
then the Spin number 
$$
Spin(g,M)=\frac{1}{4}(\sum_{k=1}^2(y_k-x_k)\csc^2\frac{k\pi}{5}
+(-1)^k w_k\csc\frac{k\pi}{5}\cot\frac{k\pi}{5}+
(z_1-z_2)\csc\frac{\pi}{5}\csc\frac{2\pi}{5}).
$$
Now the key observation is that for $g^2$, the contributions to the Spin number for group I and 
group II switch values. It follows easily then, with the identities $\sum_{k=1}^2\csc^2\frac{k\pi}{5}=4$
and $\sum_{k=1}^2(-1)^k \csc\frac{k\pi}{5}\cot\frac{k\pi}{5}=-2$, that
$$
Spin(g,M)+Spin(g^2,M)=\sum_{k=1}^2(y_k-x_k-\frac{1}{2} w_k)=y-x-\frac{1}{2}\sum_i Y^2_i
=-\frac{5}{2}\sum_i Y^2_i.
$$
(Note that $2\sum_i Y_i^2=x-y$ from the weak version of $G$-signature theorem.) 
On the other hand, recall that in the definition of Spin number
$$
Spin(g,M)=d_0+d_1\mu+d_2\mu^2+d_3\mu^3+d_4\mu^4, \mbox{ where } \mu=\exp(2\pi i/5),
$$
one has $d_1=d_4$, $d_2=d_3$. As $Spin(g^2,M)=d_0+d_1\mu^2+d_2\mu^4+d_3\mu+d_4\mu^3$, 
it follows easily that
$$
-\frac{5}{2}\sum_i Y^2_i =Spin(g,M)+Spin(g^2,M)=2d_0-d_1-d_2. 
$$
Finally, $d_0+d_1+d_2+d_3+d_4=\text{Ind }\D=-Sign(M)/8=0$. It follows immediately that
$d_0=-\sum_i Y_i^2$. The integer $d_0$ is the index of Dirac operator on the spin orbifold $M/G$, 
which equals $0$ because $b_2^{-}(M/G)=b_2^{+}(M/G)=1$ (see Fukumoto-Furuta \cite{FF}, 
Corollary 1). This rules out the cases (1),(2),(4), where $d_0=-\sum_i Y_i^2\neq 0$. 

The above calculation also shows that in the remaining cases, $d_0=d_1+d_2=0$. Moreover,
note that each $Y_i$ is a torus with $Y_i^2=0$. In particular, $w_1=w_2=0$.

To deal with the remaining possibilities, we use the Mod $p$ vanishing theorem of 
Seiberg-Witten invariants (cf. \cite{N}). We shall first compute with the $G$-signature theorem 
(not the weak version). First, recall that $\chi(M/G)=4$, so that $b_2^{-}(M/G)=1$ is true. 
It follows that in Lemma 2.7,  the angles  $\theta_1\neq \pm \theta_2$. Without loss of generality, 
we assume 
$\theta_1=\frac{2\pi}{5}$ and $\theta_2=\frac{4\pi}{5}$ in Lemma 2.7. With this we have 
$$
Sign(g,M)=2(\cos \frac{6\pi}{5}-\cos\frac{-2\pi}{5})=-2(\cos \frac{\pi}{5}+\cos\frac{2\pi}{5}).
$$
On the other hand, we observe that the same division of fixed points or components into group I 
or group II works here too. With this understood, noting that $w_1=w_2=0$, it follows easily from
the $G$-signature theorem that 
$$
Sign(g,M)=\sum_{k=1}^2(y_k-x_k)\cot^2\frac{k\pi}{5}+(z_2-z_1)\cot\frac{\pi}{5}\cot\frac{2\pi}{5}.
$$
Next we observe that $Sign(g^2,M)=2(\cos \frac{12\pi}{5}-\cos\frac{-4\pi}{5})=-Sign(g,M)$,
and moreover, for $g^2$ the contributions to the Sign number for group I and 
group II switch values. Taking the difference $Sign(g,M)-Sign(g^2,M)$, and using the identities
(see Lemma 6.4 in \cite{CK2})
$$
\cot^2\frac{\pi}{5}-\cot^2\frac{2\pi}{5}=\csc^2\frac{\pi}{5}-\csc^2\frac{2\pi}{5}=4\cot\frac{\pi}{5}\cot\frac{2\pi}{5},
$$
we obtain 
$$
Sign(g,M)=(2(y_1-y_2+x_2-x_1)+(z_2-z_1))\cdot \cot\frac{\pi}{5}\cot\frac{2\pi}{5}.
$$
Now finally, observing the identity $5\cot\frac{\pi}{5}\cot\frac{2\pi}{5}
=2(\cos \frac{\pi}{5}+\cos\frac{2\pi}{5})=-Sign(g,M)$, we obtain the following constraint 
$$
2(y_1-y_2+x_2-x_1)+z_2-z_1=-5.
$$

With these preparations, we examine the remaining cases (3), (5) in more detail.
First consider case (3), where $x=y=2$, $z=1$. Observe that $y_1-y_2+x_2-x_1$ is always 
even. It follows easily that $z_2-z_1=-1$ and $y_1-x_1=-(y_2-x_2)=-1$ in this case. 
For case (5) where $x=y=1$, $z=3$, note that $y_1-y_2+x_2-x_1=\pm 2$. It follows that 
$z_2-z_1=-1$ and $y_1-x_1=-(y_2-x_2)=-1$ as well. 

Next we check this against the formula for the Spin number $Spin(g,M)$. To this end, we will use the following
identities:
$$
\csc\frac{\pi}{5}\csc\frac{2\pi}{5}=4\cot\frac{\pi}{5}\cot\frac{2\pi}{5},\;\;
\csc\frac{\pi}{5}\cot\frac{\pi}{5}+\csc\frac{2\pi}{5}\cot\frac{2\pi}{5}=6\cot\frac{\pi}{5}\cot\frac{2\pi}{5},
$$
which can be easily verified by direct calculation. Now with this understood, note that 
on the one hand, the definition of the Spin number gives 
$$
Spin(g,M)=\sum_{k=0}^4 d_k\mu^k=2d_1(\cos \frac{\pi}{5}+\cos\frac{2\pi}{5})=5d_1\cot\frac{\pi}{5}\cot\frac{2\pi}{5},
$$
and on the other hand, we have from the formula in Lemma 3.8 of \cite{CK2} that 
$$
Spin(g,M)=\frac{1}{4}(-\csc^2\frac{\pi}{5}+\csc^2\frac{2\pi}{5}+\csc\frac{\pi}{5}\csc\frac{2\pi}{5})=0.
$$
It follows immediately that in cases (3), (5), we have $d_1=0$, and as a result, $d_k=0$ for
all $k=0,1,\cdots,4$. 

With the preceding understood, recall that the condition in the Mod $p$ vanishing theorem of 
Seiberg-Witten invariants (cf. \cite{N}) is $2d_k<1-b_1^G+b_{+}^G$ for any $k=0,1,\cdots,4$, 
where $b_1^G=b_1(M/G)=0$ and $b_{+}^G=b_2^{+}(M/G)=1$ (note that since $b_1(M/G)=0$,
the fixed-point set $J^G$ in the Mod $p$ vanishing
theorem consists of a single point, i.e.,  $[0]$, so the integers 
$\{k^l_j\}$ in the theorem are given by $\{d_k\}$ for any $l$, and the integer $d(c)=0$). 
With $d_k=0$ for all $k$, the above condition in the Mod $p$ vanishing theorem is satisfied, 
so the Seiberg-Witten invariant for the canonical $Spin^c$ structure (which is induced by
a spin structure on $M$) vanishes (mod $5$). But by Taubes' theorem \cite{T}, the Seiberg-Witten invariant 
equals $1$, which is a contradiction. Hence cases (3), (5) are ruled out. This finishes the proof. 

\end{proof}

It remains to consider the case where $G$ is of non-prime order, $M_G$ is rational or ruled,
but for any prime order subgroup $H$, $M_H$ has torsion canonical class. Let $n$ be the order
of $G$. Then by Lemma 2.8, $n=2^k3^l$. We first note that $n\neq 6$. This is because if $n=6$,
then $G=\Z_2\times \Z_3$, and with the assumption that for any prime order subgroup $H$, $M_H$ has torsion canonical class, it follows easily that $M_G$ has torsion canonical class as well, 
which is a contradiction. Consequently, either $k>1$ or $l>1$ in $n=2^k3^l$. Finally, note that 
for any nontrivial element $g\in G$, the angles $\theta_1,\theta_2$ in Lemma 2.7 are both nonzero. 
In particular, $b_1(M/G)=0$, and $M_G$ must be rational. 

First of all, we have
\begin{lemma}
Suppose $G=\Z_4$ and for the order $2$ subgroup $H$, $M_H$ has torsion canonical class.
Then there are two possibilities:
\begin{itemize}
\item [{(i)}] $b_2^{+}(M/G)=3$, and the $G$-action has $4$ isolated fixed points, all of type $(1,3)$, 
and $12$ isolated points of isotropy of order $2$. 
\item [{(ii)}] $M_G$ is rational, and the $G$-action has $4$ isolated fixed points, all of type $(1,1)$, 
and $12$ isolated points of isotropy of order $2$. 
\end{itemize}
\end{lemma}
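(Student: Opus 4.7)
I would start by applying Lemma 2.8(2) to the order-$2$ subgroup $H$: since $M_H$ has torsion canonical class and $b_1(M)=4$, it follows that $|M^H|=16$ with no $2$-dimensional fixed component. Fixing a generator $g$ of $G=\Z_4$ and letting $\theta_1,\theta_2$ denote the rotation angles of $g$ on $H^1(M;\R)$ furnished by Lemma 2.7, the element $g^2$ acts on $H^1(M;\R)$ with angles $2\theta_1,2\theta_2$, so Lemma 2.7(2) yields $L(g^2,M)=4(1-\cos 2\theta_1)(1-\cos 2\theta_2)=16$. This forces $\cos 2\theta_i=-1$, i.e., $\theta_i\in\{\pi/2,3\pi/2\}$ for both $i$.

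Up to replacing $g$ by $g^{-1}$, which sends $\pi/2\leftrightarrow 3\pi/2$ simultaneously in both coordinates, only two inequivalent configurations remain: either $\theta_1=\theta_2=\pi/2$, in which case $\theta_1+\theta_2=\pi$ and Lemma 2.7(3) gives $b_2^{+}(M/G)=1$; or $\{\theta_1,\theta_2\}=\{\pi/2,3\pi/2\}$, in which case $\theta_1+\theta_2=2\pi$ and $b_2^{+}(M/G)=3$. In each, Lemma 2.7(2) yields $L(g,M)=4$, and since $M^G\subseteq M^H$ is contained in a finite set, $M^G$ consists of exactly $4$ isolated fixed points. At each such point the weight pair $(a,b)\in\{1,2,3\}^2$; the possibility $(2,2)$ is ruled out because it would force $g^2$ to act trivially on a tangent space and thus produce a positive-dimensional component of $M^H$. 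So the only possible types are $(1,1)$ and $(1,3)$.

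Writing $x$ for the number of type $(1,1)$ points and $y$ for type $(1,3)$, I have $x+y=4$; the $G$-signature theorem (with no surface contributions) gives $Sign(g,M)=-x\cot^2(\pi/4)-y\cot(\pi/4)\cot(3\pi/4)=-x+y$, and comparing with Lemma 2.7(3)'s $Sign(g,M)=-4\sin\theta_1\sin\theta_2$ immediately yields $(x,y)=(4,0)$ in the first configuration and $(0,4)$ in the second, which are cases (ii) and (i) of the lemma respectively. The $12$ isolated points of isotropy order $2$ arise as $M^H\setminus M^G$. In case (ii), I would then observe that the $\Z_4$ quotient singularity with weights $(1,1)$ is not Du Val (its linear representation has determinant $-1$), so $M/G$ has non-Du-Val isolated singularities and $M_G$ cannot be of torsion canonical class; combined with $b_1(M/G)=0$ (forced by $\theta_i\neq 0$), the trichotomy for $M_G$ recalled at the start of Section 2 forces $M_G$ to be rational, as claimed. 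The step I expect to require the most care is the reduction to exactly two inequivalent angle configurations, where the signs of the weights must be tracked consistently under the change of generator; once this bookkeeping is in place, the arithmetic of $L(g,M)$ and $Sign(g,M)$ separates the two cases cleanly.
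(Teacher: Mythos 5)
Your proof is correct, and it reaches the lemma by a genuinely different route in its second half. The opening steps coincide with the paper's: both arguments use $M^H=16$ isolated points to force $2\theta_i\equiv\pi$, reduce to the two angle configurations $\theta_1=\pm\theta_2$ (hence $b_2^{+}(M/G)=1$ or $3$), and get $L(g,M)=4$ so that $M^G$ is four isolated points with the $12$ order-$2$ points being $M^H\setminus M^G$. Where you diverge is in pinning down the types: you compare the $G$-signature theorem's local contributions $-x+y$ against $Sign(g,M)=-4\sin\theta_1\sin\theta_2$ from Lemma 2.7(3), which yields $(x,y)=(4,0)$ or $(0,4)$ in the respective configurations, and then you identify the $\theta_1=\theta_2$ configuration as the rational case via the non-Du-Val criterion for a type $(1,1)$ order-$4$ point. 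The paper instead passes to the resolution: it counts the $10$ singular points of $M/G$, computes $\chi(M_G)=12+x+3y$ and $c_1(K_{M_G})^2=-x$ from the resolution graphs, and invokes $c_1(K_{M_G})^2=12-\chi(M_G)$ for a rational surface to get $y=0$ in case (ii), while case (i) follows from the Du Val constraint. Your signature-theorem computation is more self-contained (it does not need Proposition 3.2 of \cite{C3} or the rational-surface numerology) and it determines the types uniformly in both cases; the paper's version has the advantage of simultaneously producing $\chi(M_G)$ and $c_1(K_{M_G})^2$, which are reused in Theorem 1.3 and in the order-$8$ analysis of Lemma 2.12. One small presentational point: when you exclude weight pairs containing a $2$, you should say explicitly that a single weight equal to $2$ already gives $d(g^2)_q$ a $+1$-eigenvector and hence a positive-dimensional component of $M^H$, so that the mixed pairs $(1,2),(2,3)$, etc.\ are ruled out along with $(2,2)$; your conclusion is right, but as written only $(2,2)$ is addressed.
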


\begin{proof}
Fix a generator $g\in G$. It is easy to see that in Lemma 2.7, either $\theta_1=-\theta_2$
or $\theta_1=\theta_2$. So either $b_2^{+}(M/G)=3$, $b_2^{-}(M/G)=1$, or 
$b_2^{+}(M/G)=1$, $b_2^{-}(M/G)=3$. In any case, we have $\chi(M/G)=6$. Finally, observe that
$L(g,M)=4$ in both cases.

On the other hand, by examining the action of $G$ on $M^H$, which consists of $16$ isolated 
points, and with $L(g,M)=4$, it follows easily that $M/G$ has $10$ isolated singularities.
With $\chi(M/G)=6$, it follows that $\chi(M_G)>12$, so that if $M_G$ has torsion canonical class,
then $b_2^{+}(M/G)=3$ must be true. Case (i) follows immediately. 

Suppose $M_G$ is rational, and let $x,y$ be the number of fixed points of type $(1,1)$ and
$(1,3)$ respectively. Then note that each type $(1,1)$ fixed point contributes a $(-4)$-sphere in
$M_G$, which in turn contributes $-1$ to $c_1(K_{M_G})^2$. The other singular points of $M/G$
contribute zero, hence $c_1(K_{M_G})^2=-x$. On the other hand, note that $\chi(M_G)=
\chi(M/G)+x+3y+6=12+x+3y$. As $M_G$ is rational, $c_1(K_{M_G})^2=12-\chi(M_G)$,
which implies $y=0$. Hence $x=4$, and case (ii) follows. This finishes the proof.

\end{proof}

Now finally, we have

\begin{lemma}
Suppose $M_G$ is rational, but for any prime order subgroup $H$, $M_H$ has torsion canonical class. Then the order $n$ of $G$ must either $4$ or $8$. Moreover, if $n=8$, then the $G$-action 
falls into one of the following two cases: 
\begin{itemize}
\item [{(i)}] the $G$-action has $2$ isolated fixed points, all of type $(1,3)$, $2$ isolated points of isotropy of order $4$ of type $(1,3)$, 
and $12$ isolated points of isotropy of order $2$;
\item [{(ii)}] the $G$-action has $2$ isolated fixed points, all of type $(1,5)$, $2$ isolated points of isotropy of order $4$ of type $(1,1)$, 
and $12$ isolated points of isotropy of order $2$.
\end{itemize}
\end{lemma}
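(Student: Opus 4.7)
The plan is to rule out every candidate for $n$ other than $4$ and $8$ by exploiting Lemma 2.7 together with the prime-subgroup data from Lemma 2.8, and then to read off the fixed-point structure for $n=8$ from the $G$-signature formula at the $\Z_8$-fixed points. Lemma 2.10 has already handled $n=4$, so the remaining candidates allowed by the preamble are $n\in\{8,9,12,16,18,24,\ldots\}$.

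First I would rule out the larger orders by an arithmetic obstruction on the generator $g\in G$. By Lemma 2.7(1), the angles $\theta_1,\theta_2$ of $g$ on $H^1(M;\R)$ satisfy $2(\cos\theta_1+\cos\theta_2),\ 4\cos\theta_1\cos\theta_2\in\Z$, and both are nonzero since $b_1(M/G)=0$. A direct case check on the values $\cos(2\pi k/n)$ for $\gcd(k,n)=1$ shows that if $n$ is divisible by $9$ or $16$, or if $n=24$, no pair of angles making $g$ act with full order on $H^1$ can satisfy both integrality conditions. Consequently $g$ acts on $H^1$ with strictly smaller order, so some nontrivial power $g^m$ acts trivially on $H^1$; Lemma 2.7(2) then gives $L(g^m,M)=0$, whereas $g^m$ lies in the $\Z_2$- or $\Z_3$-subgroup of $G$, whose fixed-point sets consist of $16$ or $9$ isolated points by Lemma 2.8, forcing $L(g^m,M)>0$. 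This eliminates $n\geq 9$ except for $n=12$.

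The case $n=12$ is the main obstacle, because the angles $(\pi/6,5\pi/6)$ satisfy both integrality conditions. Here $L(g,M)=1$ and $Sign(g,M)=\pm 1$, so $M^{\Z_{12}}=\{q\}$ is a single point. The signature contribution $-\cot(a\pi/12)\cot(b\pi/12)$ at $q$ must equal $\pm 1$, which rules out every weight class except $(1,5)$; via the embedding $\Z_4\subset\Z_{12}$ this forces the $\Z_4$-weights at $q$ to be $(1,1)$, so Lemma 2.10 places the $\Z_4$-subgroup in case (ii), with $M_{\Z_4}$ rational. The hard part is to derive a contradiction from this. The natural route is to pass to the induced $\Z_2=\Z_{12}/\Z_6$-action on the torsion-canonical surface $M_{\Z_6}$ (whose Du~Val structure comes from the same local check that ruled out $n=6$): this $\Z_2$ acts non-symplectically on the homology $K3$ surface $M_{\Z_6}$, and one would show that its fixed set, forced to contain the $A_5$-chain over $q$ together with an additional $(-2)$-sphere, is not realisable. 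An alternative is to combine the spin-number formula of Lemma 3.8 in \cite{CK2} for the $\Z_3$-subgroup with the Mod-$3$ vanishing of Seiberg--Witten invariants to contradict Taubes' theorem.

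Finally for $n=8$, angles $(\pi/4,3\pi/4)$ give $L(g,M)=2$ and $Sign(g,M)=\pm 2$, hence $|M^{\Z_8}|=2$. Integrality of $-\cot(a\pi/8)\cot(b\pi/8)$ at each of the two fixed points restricts the weight class to $(1,3)$ (contribution $-1$, total $Sign=-2$, case (i)) or $(1,5)$ (contribution $+1$, total $Sign=+2$, case (ii)). Squaring shows the induced $\Z_4$-weights at those two points are $(1,3)$ respectively $(1,1)$, so by Lemma 2.10 all four $\Z_4$-fixed points share this type; the $12$ isolated points of order-$2$ isotropy account for $|M^{\Z_2}|-|M^{\Z_4}|=16-4$. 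Counting $\chi$ and signature contributions from the Hirzebruch--Jung resolutions of all cyclic quotient singularities then gives $\chi(M_G)=14$ and $\mathrm{Sign}(M_G)=-10$, so $M_G=\C\P^2\#11\overline{\C\P^2}$ in both cases.
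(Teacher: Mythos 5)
Your handling of $n=8$ and of the orders divisible by $9$ or $16$ is sound and reaches the paper's conclusions (the paper pins down the two $n=8$ subcases via the relation $c_1(K_{M_G})^2=12-\chi(M_G)$ applied to the resolutions of the singular points, rather than via the $G$-signature theorem at the $\Z_8$-level as you do, but both routes work). The genuine gap is exactly where you flag it: the elimination of $n=12$. You correctly observe that the pair $\theta_1=\pi/6$, $\theta_2=5\pi/6$ satisfies both integrality conditions of Lemma 2.7(1) ($2(\cos\theta_1+\cos\theta_2)=0$, $4\cos\theta_1\cos\theta_2=-3$); note that here you are in fact contradicting the paper, whose entire argument for excluding order $12$ is the assertion that these conditions are violated. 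Having then derived that a putative $\Z_{12}$-action must have a single fixed point of type $(1,5)$ with the $\Z_4$-subgroup in case (ii) of Lemma 2.11, you only sketch two possible routes to a contradiction ("one would show that its fixed set \dots is not realisable"; "an alternative is to combine \dots with the Mod-$3$ vanishing") without carrying either out. As written, the proof of the first assertion of the lemma is therefore incomplete.

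Worse, the missing contradiction does not appear to exist. Take $M=\C^2/\Z[\zeta_{12}]$, with the lattice embedded via one embedding from each conjugate pair of $\Q(\zeta_{12})\hookrightarrow\C$, and let $g$ act by multiplication by $\zeta_{12}$: this is a holomorphic, K\"{a}hler-isometric $\Z_{12}$-action on a complex torus realizing exactly the configuration you derive. Indeed $|\mathrm{Fix}(g^j)|=|\det(1-A^j)|$ equals $1,1,4,9,1,16$ for $j=1,2,3,4,6$; the $\Z_2$- and $\Z_3$-fixed points have types $(1,1)$ and $(1,2)$, so $M/\Z_2$ and $M/\Z_3$ have only Du Val singularities and $M_{\Z_2},M_{\Z_3}$ have torsion canonical class; the unique $g$-fixed point has weights $(1,5)$, which is not Du Val, and $b_1(M/G)=0$, so $M_G$ is rational; and the consistency check passes, since the $\frac{1}{12}(1,5)$ point (resolution $[3,2,3]$) and the $\frac14(1,1)$ point each contribute $-1$ to $c_1(K_{M_G})^2$, giving $c_1(K_{M_G})^2=-2=12-\chi(M_G)$ with $\chi(M_G)=14$. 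So all hypotheses of the lemma hold with $n=12$, and the resulting configuration survives every index-theoretic and numerical test used in Section 2. Consequently neither of your proposed completions can succeed: either the statement must be amended to admit $n=12$, or an obstruction well beyond the tools you (and the paper) invoke is needed. In either case the proposal does not prove the statement.
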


\begin{proof}
It is easy to check that if $G$ contains an element $g$ of order $9$, $12$, or $16$,
then for the angles $\theta_1,\theta_2$ of $g$ in Lemma 2.7, the integrability conditions in 
Lemma 2.7(1) are violated. It follows easily that $k\leq 3$ and $l=0$ in
$n=2^k3^l$, i.e., $n=4$ or $8$. 

With the preceding understood, suppose $n=8$. We fix a generator $g$ such that 
$\theta_1=\frac{2\pi}{8}$, $\theta_2=\frac{2\pi q}{8}$ in Lemma 2.7, where $q$ is odd and 
$0<q<8$. We note that $q\neq 1$ or $7$, for otherwise, the integrability conditions in 
Lemma 2.7(1) are violated. 
On the other hand, let $H$ be the subgroup of order $4$ generated by $g^2$. Then 
by Lemma 2.11, there are two cases, (i) and (ii), as listed therein. 

Suppose we are in case (i) of Lemma 2.11 where $M_H$ has torsion canonical class.
In this case, $b_2^{+}(M/H)=3$, which easily implies that $q=3$ in $\theta_2$. As a corollary,
$L(g,M)=2$, and $b_2^{+}(M/G)=b_2^{-}(M/G)=1$, so that $\chi(M/G)=4$.
Examining the action of $g$ on $M^H$, with $L(g,M)=2$, it follows easily that $M/G$ has $6$ 
isolated singular points, where two of them have isotropy 
of order $8$, one of isotropy of order $4$, and three of isotropy of order $2$. Now we determine the action of $g$ at the two fixed points. We note that the minimal resolution of a singular point of
order $8$ of type $(1,3)$ in $M_{G}$ is a pair of $(-3)$-spheres intersecting transversely and 
positively at one point. Its contribution to $c_1(K_{M_{G}})^2$ is easily seen to be $-1$. 
All other types of singular points of $M/G$ are Du Val singularities, so make zero contribution. 
On the other hand, the 
minimal resolution of a singular point of order $8$ of type $(1,7)$ in $M_{G}$ is a linear chain 
of seven $(-2)$-spheres, so its contribution to $\chi(M_{G})$ is $7$. 
With $c_1(K_{M_{G}})^2=12-\chi(M_{G})$, it follows easily that there cannot be any fixed point of $g$ of type $(1,7)$. This finishes the discussion on case (i).

The analysis for case (ii) of Lemma 2.11, where $M_H$ is rational, is completely analogous, hence
omitted. This finishes the proof. 

\end{proof}

\section{Symplectic surfaces in a rational $4$-manifold}

Let $(X,\omega)$ be a symplectic rational $4$-manifold where $X=\C\P^2\# N\overline{\C\P^2}$.
We shall denote the canonical line bundle of $(X,\omega)$ by $K_\omega$ to indicate the dependence on $\omega$. We also use $K_X$ when the dependence on $\omega$ needs not to be emphasized. 

We begin with the definition of reduced bases of $(X,\omega)$.
To this end, let $\E_X$ be the set of classes in $H^2(X)$ which can be represented by a smooth 
$(-1)$-sphere, and let $\E_\omega:=\{E\in \E_X| c_1(K_\omega)\cdot E= -1\}$. Then each class in 
$\E_\omega$ can be represented by a symplectic $(-1)$-sphere (cf. \cite{LiLiu0}); 
in particular, $\omega(E)>0$ for any $E\in\E_\omega$. 

\begin{definition}
A basis $H, E_1,\cdots, E_N$ of $H^2(X)$ is called a {\bf reduced basis} of 
$(X,\omega)$ if the following are true:
\begin{itemize}
\item it has a standard intersection form, i.e., $H^2=1$, $E^2_i=-1$ and $H\cdot E_i=0$ for any $i$, and $E_i\cdot E_j=0$ for any $i\neq j$; 
\item $E_i\in\E_{\omega}$ for each $i$, and moreover, if $N\geq 3$, the following area conditions are satisfied: 
$\omega(E_N)=\min_{E\in\E_{\omega}}\omega(E)$, and for any $2<i<N$, 
$\omega(E_i)=\min_{E\in\E_i}\omega(E)$, where
$\E_i:=\{E\in\E_{\omega}| E\cdot E_j=0 \;\; \forall j>i\}$ for any $i<N$; 
\item $c_1(K_{\omega})=-3H+E_1\cdots+E_N$.
\end{itemize}
\end{definition}

Without loss of generality, we assume $\omega(E_1)\geq \omega(E_2)$. Then 
the following constraints on the symplectic areas are straightforward from Definition 3.1. 
\begin{itemize}
\item $\omega(H)>0$, and $\omega(E_i)\geq \omega(E_j)$ for any $i<j$;
\item for any $i\neq j$, $H-E_i-E_j\in\E_{\omega}$, so that $\omega(H-E_i-E_j)>0$; 
\item $\omega(H-E_i-E_j-E_k)\geq 0$ for any distinct $i,j,k$.
\end{itemize}
Reduced bases always exist, see \cite{LW} for more details. We remark that a reduced basis is not necessarily unique, however, the symplectic areas of its classes 
$$(\omega(H),\omega(E_1),\cdots, \omega(E_N))$$ 
uniquely determine the symplectic structure $\omega$ up to symplectomorphisms, cf. \cite{KK}. 

Secondly, we recall the following technical result concerning reduced bases, which will be used in
Section 5. 

\begin{lemma}
{\em(}cf. \cite{KK}{\em)}
Let $N\geq 2$. Then for {\bf any} $\omega$-compatible almost complex structure $J$, any class $E\in\E_{\omega}$ of minimal symplectic area can be represented by an embedded $J$-holomorphic sphere. In particular, for $N\geq 3$, the class $E_N$ in a reduced basis 
$H, E_1,\cdots, E_N$ can be represented by a $J$-holomorphic $(-1)$-sphere for any $J$.
\end{lemma}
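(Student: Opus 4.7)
The plan is to combine Taubes' equivalence SW $=$ Gr with the area minimality of $E$ to force any stable $J$-holomorphic representative of $E$ to be an embedded sphere.

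First, since $E\in\E_\omega$ satisfies $E^2=-1$ and $c_1(K_\omega)\cdot E=-1$, Taubes' SW $=$ Gr theorem applied to the rational surface $X$ (which has $b_2^{+}=1$) shows that the Gromov invariant of $E$ equals $\pm 1$. Hence for every $\omega$-compatible almost complex structure $J$, there exists a stable $J$-holomorphic map $u : \Sigma\to X$ in class $E$. I would then decompose this stable map as $E=\sum_i m_i A_i$, where $A_i$ is the class of the image of an irreducible component of $\Sigma$ and $m_i\geq 1$ is the covering multiplicity, and each $A_i$ is represented by an irreducible $J$-holomorphic curve $C_i$ with $\omega(A_i)>0$.

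The heart of the argument is to show that this decomposition is trivial: $u$ is a single embedded sphere in class $E$. The strategy is to use positivity of intersections $A_i\cdot A_j\geq 0$ for $i\neq j$ together with the adjunction inequality $A_i^2+c_1(K_\omega)\cdot A_i\geq 2g_i-2$ (applied to each irreducible $C_i$), combined with $E^2=-1$ and $c_1(K_\omega)\cdot E=-1$, to extract a distinguished component $A_{i_0}\in\E_\omega$ of multiplicity $m_{i_0}=1$ whose curve $C_{i_0}$ is an embedded sphere. Area minimality of $E$ then closes the argument: $\omega(A_{i_0})\geq \omega(E)$ by the minimality hypothesis, while the decomposition gives $\omega(E)=\omega(A_{i_0})+\sum_{i\neq i_0}m_i\omega(A_i)$ with every summand positive. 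This forces the sum over $i\neq i_0$ to be empty, so that $E=A_{i_0}$ and $E$ is represented by the embedded sphere $C_{i_0}$.

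The main obstacle is the combinatorial step of pinning down the exceptional component $A_{i_0}$ inside the cusp curve; this is essentially McDuff's structure theorem for $J$-holomorphic representatives of exceptional classes on rational surfaces, and requires careful bookkeeping of the contributions of higher-genus and multiply-covered components to $E\cdot E$ and $c_1(K_\omega)\cdot E$. The second assertion of the lemma is then immediate: by Definition 3.1 the class $E_N$ has minimal $\omega$-area among all elements of $\E_\omega$, so the first part applies directly to $E=E_N$.
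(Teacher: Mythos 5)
The paper does not actually prove this lemma; it cites Karshon--Kessler [KK] and moves on, so there is no in-text argument to compare against. Your outline follows what is essentially the standard (and, I believe, the cited) proof: nonvanishing of the Gromov--Taubes invariant of $E$ gives, for every $\omega$-compatible $J$, a connected stable $J$-holomorphic representative (obtained from embedded $J_n$-spheres for generic $J_n\to J$ via Gromov compactness); one writes $E=\sum_i m_iA_i$ with each $A_i$ the class of a simple irreducible rational $J$-curve; one locates a component lying in $\E_\omega$; and minimality of $\omega(E)$ then forces the decomposition to be trivial. Your concluding area argument is correct and does not even require $m_{i_0}=1$. Two small points you should add for completeness: on a $b_2^{+}=1$ manifold the relevant nonvanishing statement is the Li--Liu wall-crossing refinement (the paper's reference [LiLiu0]) rather than SW $=$ Gr in Taubes' original form; and once the decomposition is known to be trivial you must still rule out $E=mA$ with $m\geq 2$ (impossible since $m^2A^2=-1$) and observe that an irreducible curve in class $E$ is automatically an embedded sphere because the adjunction inequality gives $g+\delta\leq 1+\tfrac12(E^2+c_1(K_\omega)\cdot E)=0$.

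The genuine gap is that the entire content of the lemma is concentrated in the step you label ``the main obstacle'' and then defer to ``McDuff's structure theorem'': the claim that a nontrivial connected decomposition $E=\sum_i m_iA_i$ must contain a component in $\E_\omega$. The easy half of this is routine: from $E^2=-1<0$ and $A_i\cdot A_j\geq 0$ for $i\neq j$ one finds a component with $E\cdot A_{i_1}<0$, hence $A_{i_1}^2<0$, and adjunction for the irreducible curve gives $c_1(K_\omega)\cdot A_{i_1}\geq -2-A_{i_1}^2\geq -1$, with equality forcing $A_{i_1}^2=-1$, genus zero and embeddedness, i.e.\ $A_{i_1}\in\E_\omega$. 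The hard half is excluding the alternative in which every negative-square component satisfies $c_1(K_\omega)\cdot A_i\geq 0$; then $c_1(K_\omega)\cdot E=-1$ forces a component with $A_{i_0}^2\geq 0$ and $c_1(K_\omega)\cdot A_{i_0}\leq -1$, and ruling this configuration out requires additional input (the light-cone lemma for $b_2^{+}=1$, connectedness of the stable limit, and control of which classes admit irreducible rational $J$-representatives on a rational surface) --- none of which appears in your sketch, and none of which is a formality. Since the paper itself only points to [KK], your proposal is in the same spirit as the paper's treatment, but as a self-contained proof it is incomplete until that step is supplied.
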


With the preceding understood, we fix a reduced basis $H, E_1,\cdots, E_N$ of $(X,\omega)$. 
Then for any $A\in H^2(X)$, we can write
$$
A=aH-\sum_{i=1}^N b_i E_i, \mbox{ where $a,b_i\in\Z$.}
$$ 
We first derive some general constraints on the coefficients $a$ and $b_i$ when $A$ is represented by a connected, embedded symplectic surface, particularly, when $A$ is the class of a symplectic 
$(-\alpha)$-sphere for $\alpha>1$. These constraints are consequences of the 
fundamental work of Li-Liu \cite{LiLiu} and Li-Li \cite{LL} on symplectic rational $4$-manifolds. 

First of all, a few useful facts. For a generic $\omega$-compatible almost complex structure 
$J$, the class $H$ and any class $E\in \E_\omega$ can be represented by a $J$-holomorphic sphere (cf. \cite{LiLiu0}). In particular, this implies that for any $E\in \E_\omega$, where
$E\neq E_i$, $1\leq i\leq N$, the coefficients in $E=aH-\sum_{i=1}^N b_i E_i$ satisfy $a>0$, 
$b_i\geq 0$ for all $i$ by the positivity of intersection of $J$-holomorphic curves. Similarly, if 
$A=aH-\sum_{i=1}^N b_i E_i$ is the class of a connected, embedded symplectic surface 
with $A^2\geq 0$, then by choosing an $\omega$-compatible almost complex structure 
$J$ such that the symplectic surface is $J$-holomorphic, we see easily that $a>0$ and 
$b_i\geq 0$ for all $i$. 

The situation is more subtle when $A^2<0$ and $A$ is not a class in $\E_\omega$. We begin with the following lemma.

\begin{lemma}
Suppose $A=aH-\sum_{i=1}^N b_i E_i$ is the class of a connected, embedded symplectic 
surface of genus $g$. 
\begin{itemize}
\item [{(1)}] If $a>0$, then $b_i\geq 0$ for all $i$.
\item [{(2)}] The $a$-coefficient of $A$ satisfies the following inequality: $(a-1)(a-2)\geq 2g$,
with ``$=$" if and only if $b_i=0$ or $1$ for all $i$. 
\end{itemize}
\end{lemma}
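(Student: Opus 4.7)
Plan:

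I would tackle (2) first by a direct adjunction computation and then (1) by positivity of intersection.

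For (2), the reduced basis intersection form $H^{2}=1$, $E_i^{2}=-1$, $H\cdot E_i=0$, $E_i\cdot E_j=0$ ($i\neq j$) gives immediately $A^{2}=a^{2}-\sum_i b_i^{2}$. Since $c_1(K_\omega)=-3H+\sum_j E_j$, one has $c_1(K_\omega)\cdot A=-3a+\sum_i b_i$. Substituting into the adjunction formula $2g-2=A^{2}+c_1(K_\omega)\cdot A$ and rearranging produces the identity
\begin{equation*}
(a-1)(a-2)-2g=\sum_{i=1}^{N} b_i(b_i-1).
\end{equation*}
Since each $b_i(b_i-1)$ is a product of two consecutive integers, it is non-negative, with equality iff $b_i\in\{0,1\}$. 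This yields both the inequality $(a-1)(a-2)\geq 2g$ and the equality criterion in (2).

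For (1), the key observation is that $A\cdot E_i=b_i$, so the claim reduces to showing $A\cdot E_i\geq 0$ whenever $a>0$. Because $E_i$ has $a$-coefficient zero, the hypothesis $a>0$ forces $A\neq E_i$ for every $i$. Since $E_i\in\E_\omega$ admits a symplectic $(-1)$-sphere representative (as recalled immediately before the lemma), I would select an $\omega$-compatible almost complex structure $J$ for which the given connected embedded symplectic surface in class $A$ is $J$-holomorphic, and then invoke Taubes' SW\,$=$\,Gr together with Gromov compactness (the framework used in Li--Liu and Li--Li) to produce a (possibly reducible) $J$-holomorphic representative of $E_i$. Applying positivity of intersection between the irreducible $J$-holomorphic curve in class $A$ and each irreducible component of the $E_i$-cycle forces $A\cdot E_i\geq 0$, unless the $A$-curve itself is one of those components; but in that case the remaining components would form a non-trivial effective $J$-holomorphic cycle in class $E_i-A$, whose symplectic area $\omega(E_i)-\omega(A)$ would have to be positive, a contradiction readily derived from $a>0$ and the positivity constraints on the reduced-basis areas.

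The main obstacle is the simultaneous pseudoholomorphic realization step in (1): for the (possibly non-generic) $J$ making the given $A$-surface holomorphic, one must still guarantee that $E_i$ has a $J$-holomorphic representative, and one must exclude the degenerate case where the $A$-curve appears as a component of that representative. Both of these issues are precisely what the machinery of Li--Liu and Li--Li is built to handle; once that input is in hand the two parts fit together cleanly, with part (2) being essentially elementary and part (1) being the symplectic-geometric heart of the argument.
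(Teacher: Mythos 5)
Your part (2) is exactly the paper's argument: the adjunction formula gives $(a-1)(a-2)-2g=\sum_i b_i(b_i-1)\geq 0$ with the stated equality criterion, and there is nothing more to say there.

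Part (1) is where the problem lies, and the gap is genuine. Your positivity-of-intersections scheme works only when the $J$-holomorphic curve $C$ in class $A$ is \emph{not} a component of the $J$-holomorphic cycle $\sum_j m_jC_j$ representing $E_i$; but if the lemma were false, i.e.\ if $b_i=A\cdot E_i<0$, then positivity of intersections forces $C$ to appear as a component of \emph{every} such cycle. So the ``degenerate case'' is not an edge case to be brushed aside --- it is precisely the situation you must rule out, and your proposed contradiction does not exist. Writing $E_i=mC+R$ with $R$ effective and nonzero (nonzero because the $H$-coefficients give $0=ma+(\text{$H$-coeff of }R)$ and $a>0$), all you obtain is $\omega(E_i)>m\,\omega(A)$. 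This is perfectly consistent with $a>0$ and the reduced-basis area constraints: for instance $A=H-E_1-E_2-E_3$ has $a=1$ but $\omega(A)=\omega(H-E_1-E_2-E_3)\geq 0$ can be arbitrarily small, in particular smaller than $\omega(E_i)$. Trying to repair this by intersecting $R$ with a $J$-holomorphic representative of $H$ (to force the $H$-coefficient of $R$ to be nonnegative) runs into the same difficulty, since for your non-generic $J$ the class $H$ may itself only be represented by a degenerate cycle containing $C$; this regress is exactly why the paper treats $A^2\geq 0$ by naive positivity but calls the case $A^2<0$ ``more subtle.''

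The paper's actual route for (1) avoids pseudo-holomorphic representatives of $E_i$ altogether. Assuming $b_k<0$, it applies the reflection $R(E_k):\beta\mapsto\beta+2(\beta\cdot E_k)E_k$, which is realized by an orientation-preserving diffeomorphism, to produce $\tilde A$ with the same square, the same minimal genus, and $c_1(K_\omega)\cdot\tilde A>c_1(K_\omega)\cdot A$. The hypothesis $a>0$ enters only to guarantee that the class $e=H-\sum_i\epsilon E_i$, which lies in the symplectic cone for the canonical class $c_1(K_\omega)$ when $\epsilon$ is small, pairs positively with $\tilde A$; the Li--Li symplectic genus bound then gives $g\geq\eta(\tilde A)\geq\frac12(\tilde A^2+c_1(K_\omega)\cdot\tilde A)+1$, contradicting the adjunction formula for $A$. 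You would need to import this (or an equivalent wall-crossing/symplectic-genus input) to close your argument; positivity of intersections alone cannot do it.
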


\begin{proof}
For part (1), we begin by noting that the
genus $g$ of the symplectic surface representing $A$ is given by the adjunction formula
$$
g=\frac{1}{2}(A^2+c_1(K_\omega)\cdot A)+1.
$$
Suppose to the contrary that $a>0$ but $b_k<0$ for some $k$. Then we consider the reflection 
$R(E_k)$ on $H^2(X)$ defined by the class $E_k$, where 
$$
R(E_k)\beta =\beta+2 (\beta\cdot E_k)E_k, \;\; \forall \beta\in H^2(X).
$$
If we let $\tilde{A}$ be the image of $A$ under $R(E_k)$ and write $\tilde{A}=\tilde{a} H-\sum_{i=1}^N \tilde{b}_i E_i$,
then $\tilde{a}=a$, $\tilde{b}_k=-b_k>0$, and $\tilde{b}_i=b_i$ for all $i\neq k$. It follows easily that $\tilde{A}^2=A^2$
and $c_1(K_\omega)\cdot \tilde{A}-c_1(K_\omega)\cdot A=2\tilde{b}_k>0$. Finally, since $R(E_k)$ is induced by an
orientation-preserving diffeomorphism of $X$ (cf. \cite{LL}), the class $\tilde{A}$ is represented by a smoothly embedded,
connected surface of genus $g$.

Now the condition $a>0$ enters the argument. Pick a sufficiently small $\epsilon>0$, and let $e:=H-\sum_{i=1}^N \epsilon E_i\in
H^2(X,\R)$. Then $a>0$ implies that $e\cdot \tilde{A}=a-\sum_{i=1}^N \epsilon \tilde{b}_i>0$ for sufficiently small $\epsilon>0$.
On the other hand, we claim that $e$ lies in the symplectic cone associated to the symplectic canonical class $c_1(K_\omega)$.
To see this, we only need to verify that (i) $e^2=1-N\epsilon^2>0$, which is obviously true 
when $\epsilon>0$ is sufficiently small, and (ii)
$e\cdot E>0$ for any class $E\in \E_\omega$ (cf. \cite{LiLiu}). To see (ii) is true, we write 
$E=uH-\sum_{i=1}^N v_i E_i$. Then $u^2=\sum_i v_i^2-1$ and $u\geq 0$, and 
$e\cdot E=u-\epsilon \sum_i v_i$. If $E=E_l$ for some $l$, then $e\cdot E=\epsilon>0$.
If $u>0$, then $e\cdot E=\sqrt{\sum_i v^2-1}-\epsilon \sum_i v_i>0$ when $\epsilon>0$ is sufficiently small. Hence the claim that
$e$ lies in the symplectic cone associated to the symplectic canonical class $c_1(K_\omega)$.

Now the fact that $e\cdot \tilde{A}>0$ together with the fact that $e$ lies in the symplectic cone associated to the symplectic 
canonical class $c_1(K_\omega)$ imply the following inequality on the symplectic genus $\eta(\tilde{A})$ of $\tilde{A}$ 
(cf. \cite{LL}, Definition 3.1, p. 130):
$$
\eta(\tilde{A})\geq \frac{1}{2}(\tilde{A}^2+c_1(K_\omega)\cdot \tilde{A})+1.
$$
On the other hand, the minimal genus is bounded from below by the symplectic genus (cf. \cite{LL}, Lemma 3.2). Thus 
$g\geq \eta(\tilde{A})$, which implies that 
$c_1(K_\omega)\cdot A\geq c_1(K_\omega)\cdot \tilde{A}$, a contradiction.
This finishes off part (1) of the lemma.

For part (2), the adjunction formula $A^2+c_1(K_\omega)\cdot A+2=2g$ gives
$$
a^2-\sum_{i=1}^N b_i^2-3a+\sum_{i=1}^N b_i+2=2g.
$$
With $\sum_{i=1}^N b_i^2-\sum_{i=1}^N b_i=\sum_{i=1}^N b_i(b_i-1)\geq 0$, we obtain
easily $(a-1)(a-2)\geq 2g$, with ``$=$" if and only if $b_i=0$ or $1$ for all $i$. 
This finishes off part (2), and the proof of the lemma is complete.

\end{proof}

The following lemma deals with the case where the $a$-coefficient of $A$ is negative.

\begin{lemma}
Let $A=aH-\sum_{i=1}^N b_i E_i$ be the class of a connected, embedded symplectic surface 
of genus $g$ such that $a<0$. Then 
\begin{itemize}
\item [{(1)}] the symplectic surface representing $A$ must be a symplectic $(-\alpha)$-sphere 
where $\alpha>2$, i.e, $g=0$ and $A^2<-2$, and 
\item [{(2)}] the expression $A=aH-\sum_{i=1}^N b_i E_i$ must be in the following form:
$$
A=aH+(|a|+1) E_{j_1}-E_{j_2}-\cdots-E_{j_s}, \mbox{  where $s=\alpha-2|a|$, }
$$
in particular, $2|a|<\alpha$. Moreover, $E_{j_1}=E_1$ and $\omega(E_1)>\omega(E_i)$ 
for any $i>1$. 
\end{itemize}
\end{lemma}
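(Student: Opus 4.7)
The strategy is to combine three ingredients: the adjunction formula, the positivity $\omega(A)>0$, and the reduced-basis area inequalities. Adjunction with $g \geq 0$ gives $\sum_i b_i(b_i-1) \leq (|a|+1)(|a|+2)$, each summand being non-negative and vanishing exactly when $b_i \in \{0,1\}$; from $\omega(A)>0$ and $a<0$ one gets $\sum_i(-b_i)\omega(E_i) > |a|\omega(H) > 0$, forcing at least one $b_i<0$; and the reduced-basis conditions supply $\omega(H) > \omega(E_i)+\omega(E_j)$ for distinct $i,j$ and $\omega(H) \geq \omega(E_i)+\omega(E_j)+\omega(E_l)$ for distinct $i,j,l$, and (when needed) the strict bound $2\omega(H) > \sum_{s=1}^5 \omega(E_{i_s})$ coming from $2H - \sum_{s=1}^5 E_{i_s} \in \E_\omega$. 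The aim is to conclude that exactly one $b_i$ is negative with value $-(|a|+1)$, attached to $E_1$, and all remaining nonzero $b_i$'s are $+1$, with $g=0$.

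The technical heart, and main obstacle, is showing that $N_- := \{i : b_i<0\}$ has exactly one element. I would write $N_- = \{p_1 < \cdots < p_m\}$ with $\beta_l := |b_{p_l}| \geq 1$. From adjunction, if any $\beta_l \geq |a|+1$ with $m \geq 2$ then $\beta_l(\beta_l+1) \geq (|a|+1)(|a|+2)$ saturates the bound and forces the remaining $\beta_s = 0$, a contradiction; hence $\beta_l \leq |a|$ for all $l$ when $m\geq 2$. For $m=2$, rewriting the area inequality using $\omega(H) > \omega(E_{p_1})+\omega(E_{p_2})$ yields $(\beta_1-|a|)\omega(E_{p_1}) + (\beta_2-|a|)\omega(E_{p_2}) > 0$, impossible since both factors are $\leq 0$. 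For $m=3$, the stronger $\omega(H) \geq \sum_{l=1}^3 \omega(E_{p_l})$ upgrades this to $\sum_{l=1}^3(\beta_l-|a|)\omega(E_{p_l}) > 0$, again impossible. For $m \geq 4$ I would apply the triple bound with a carefully chosen triple (e.g., $\{p_1,p_2,p_{l_0}\}$ for some $l_0 \geq 3$ tailored to the distribution of the $\beta_l$'s) and use $\omega(E_{p_l}) \leq \omega(E_{p_3})$ for $l \geq 3$ to force $m > 3|a|$; this eventually conflicts with the adjunction bound, and the residual borderline cases are closed by invoking the higher-order $\E_\omega$ constraint. This combinatorial case analysis is where the bulk of the work lies.

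With $N_- = \{j_1\}$ established, the remainder is clean. The area becomes $|b_{j_1}|\omega(E_{j_1}) > |a|(\omega(E_1)+\omega(E_2))$. If $j_1 \neq 1$, then $\omega(E_{j_1}) \leq \omega(E_2) \leq \omega(E_1)$ yields $|b_{j_1}| > 2|a|$, contradicting $|b_{j_1}|(|b_{j_1}|+1) \leq (|a|+1)(|a|+2)$ for $|a| \geq 1$. Hence $j_1=1$, and the same inequality then gives $|b_1| \geq |a|+1$; combined with adjunction this forces $|b_1| = |a|+1$ exactly, so equality in adjunction forces $g=0$ and $b_i \in \{0,1\}$ for all $i \geq 2$. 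The strict inequality $\omega(E_1) > \omega(E_i)$ for $i \geq 2$ falls out of $\omega(E_1) > |a|\omega(E_2) \geq \omega(E_2) \geq \omega(E_i)$. Letting $s-1$ count the indices with $b_i=1$ yields the desired form $A = aH + (|a|+1)E_1 - E_{j_2} - \cdots - E_{j_s}$ with $A^2 = -2|a|-s$, so the surface is a $(-\alpha)$-sphere with $\alpha = 2|a|+s > 2|a| \geq 2$, completing the proof.
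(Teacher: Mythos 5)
Your endgame (once a single negative coefficient is established) is correct and essentially matches the paper's, but the crucial step — showing that exactly one $b_i$ is negative — is where your proposal has a genuine gap, and it is not one that more careful combinatorics will close. The paper does not argue via area constraints at this point. Instead it reflects: setting $b_i^{\pm}=\max(0,\pm b_i)$, the class $\tilde{A}=|a|H-\sum_i(b_i^-+b_i^+)E_i$ is the image of $-A$ under reflections in the $E_k$ with $b_k>0$, hence is represented by a smooth genus-$g$ surface; Li--Li's symplectic genus lower bound then gives $-3|a|+\sum_i(b_i^-+b_i^+)\leq -A^2+2g-2$, which combined with adjunction yields the key estimate $\sum_i b_i^-\leq 3|a|$. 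Only with \emph{that} bound in hand (together with $b_i^-\leq|a|$) can one decompose $|a|H-\sum_i b_i^-E_i$ into classes $H$, $H-E_i$, $H-E_i-E_j$, $H-E_i-E_j-E_k$ of nonnegative area and contradict $\omega(A)>0$. Your proposal never derives $\sum_i b_i^-\leq 3|a|$, and your substitute — adjunction plus the low-degree area inequalities of a reduced basis — provably cannot force $m=1$ when $m\geq 4$.

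A concrete obstruction to your method: take $N=10$, $A=-3H+E_1+\cdots+E_{10}$ (so $|a|=3$, $m=10$, all $\beta_l=1$), with the reduced form $\omega(H)=3$, $\omega(E_i)=1-\epsilon$ for all $i$ and $0<\epsilon<1/10$. This class satisfies the adjunction identity with $g=0$ (indeed $\sum b_i(b_i-1)=20=(|a|+1)(|a|+2)$, so your saturation argument gives nothing), has $\omega(A)=1-10\epsilon>0$, and every constraint you list — $\omega(H)>\omega(E_i)+\omega(E_j)$, $\omega(H)\geq\omega(E_i)+\omega(E_j)+\omega(E_k)$, $2\omega(H)>\sum_{s=1}^{5}\omega(E_{i_s})$, and likewise all higher-degree classes of $\E_\omega$ — is satisfied; your triple-grouping bound only yields $m>3|a|=9$, which $m=10$ meets, and $m=10$ does not exceed the adjunction ceiling $(|a|+1)(|a|+2)/2=10$. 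So nothing in your toolkit rules this case out, whereas the paper's estimate $\sum_i b_i^-\leq 3|a|=9$ kills it immediately (equivalently, the symplectic genus of the reflected class $3H-E_1-\cdots-E_{10}=-c_1(K_\omega)$ is $1>g=0$). You need to import the reflection/symplectic-genus input from \cite{LL} (as in the paper's proofs of Lemmas 3.3 and 3.4); without it the case $m\geq 4$ cannot be completed.
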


\begin{proof}
Let $b_i^{-}=\max (0,-b_i)$ and $b_i^{+}=\max (0,b_i)$, and consider the class
$$
\tilde{A}=|a| H-\sum_{i=1}^N (b_i^{-}+ b_i^{+}) E_i. 
$$
Since $b_i^{-}=|b_i|$ when $b_i<0$ and equals $0$ otherwise, and $b_i^{+}=b_i$ when $b_i>0$ and equals $0$ otherwise,
it follows easily that $\tilde{A}$ is the image of $-A$ under the action of the composition of the reflections $R(E_k)$, where
$k$ is running over the set of indices such that $b_k>0$. In particular, $\tilde{A}$ is represented by a smoothly embedded surface of genus $g$. As in the proof of the previous lemma,
$e:=H-\sum_{i=1}^N \epsilon E_i$ lies in the symplectic cone associated to the symplectic canonical class $c_1(K_\omega)$ 
when $\epsilon>0$ is sufficiently small. Furthermore, as $a\neq 0$, we have $e\cdot \tilde{A}>0$, 
so that 
$$
g\geq \eta(\tilde{A})\geq \frac{1}{2}(\tilde{A}^2+c_1(K_\omega)\cdot \tilde{A})+1,
$$
where $\eta(\tilde{A})$ denotes the symplectic genus of $\tilde{A}$ (cf. \cite{LL}). The above inequality is equivalent to
$$
-3|a|+ \sum_{i=1}^N  (b_i^{-}+ b_i^{+}) \leq -A^2+2g-2.
$$
On the other hand, the adjunction formula for $A$ gives the equation 
$-3a+\sum_{i=1}^N b_i=-A^2+2g-2$, which implies easily, when combined with the above 
inequality, that $\sum_{i=1}^N b_i^{+}\leq -A^2+2g-2$. It follows that $\sum_{i=1}^N b_i^{-}\leq 3|a|$.

Note that the adjunction formula $A^2+c_1(K_\omega)\cdot A+2=2g$ also implies easily that 
$$
2g+\sum_{i=1}^N b_i(b_i-1)=a^2-3a+2=(a-1)(a-2)=(|a|+1)(|a|+2).
$$
(The last equality is due to the assumption that $a<0$.) It follows that $b_i^{-}\leq |a|+1$ 
for each $i$, and moreover, if $b_i^{-}=|a|+1$ for some $i$, then $g=0$, and for any $j\neq i$,
$b_j=0$ or $1$. With this understood, we shall next exclude the possibility that $b_i^{-}\leq |a|$ 
for any $i$, using the constraints of symplectic areas for a reduced basis. 

Suppose to the contrary that $b_i^{-}\leq |a|$ for all $i$. Then we will write $A$ as follows:
$$
A=-(|a| H-\sum_{i=1}^N b_i^{-} E_i) -\sum_{i=1}^N b_i^{+} E_i.
$$
Since $b_i^{-}\leq |a|$ for all $i$ and $\sum_{i=1}^N b_i^{-}\leq 3|a|$, the class $|a| H-\sum_{i=1}^N b_i^{-} E_i$ 
can be written as a sum of classes of the form $H$, $H-E_i$, $H-E_i-E_j$, or $H-E_i-E_j-E_k$, where distinct indices 
stand for distinct classes. Since all these classes have non-negative symplectic areas, it follows that 
$\omega(A)\leq 0$, which is a contradiction. Hence 
$$
A=aH+(|a|+1) E_{j_1}-E_{j_2}-\cdots-E_{j_s}, \mbox{ where } s=-A^2-2|a|.
$$
In particular, $A^2=-2|a|-s<-2$, and moreover, $2|a|<-A^2$.  Finally, if there is a class $E_i$ 
such that $\omega(E_{j_1})\leq \omega(E_i)$, then
$$
\omega(aH+(|a|+1) E_{j_1})\leq -(|a|-1)\omega(H-E_{j_1})-\omega(H-E_{j_1}-E_i)<0,
$$
which implies that $\omega(A)<0$. It follows easily that $E_{j_1}=E_1$, and 
$\omega(E_1)>\omega(E_i)$ for any $i>1$. This finishes the proof. 

\end{proof}

In the rest of this section, we shall be focusing on the possible homological expressions
of a symplectic $(-\alpha)$-sphere, in particular, for $\alpha=2$ and $3$.
The constraints in Lemmas 3.3 and 3.4 allow us to easily determine all the possible expressions 
of the class $A$ of a symplectic $(-\alpha)$-sphere in terms of the reduced 
basis $H, E_1,\cdots, E_N$ when the $a$-coefficient of $A$ is relatively small, say $a\leq 3$. 

To this end, write $A=aH-\sum_{i=1}^N b_i E_i$, and observe that in the following equation 
$$
\sum_{i=1}^N b_i(b_i-1)=a^2-3a+2=(a-1)(a-2)
$$
which is satisfied by the coefficients $a, b_i$ of $A$, the left-hand side is always a nonnegative, even integer. In particular, when $a=1$ or $2$, $b_i$ must be either $0$ or $1$. For $a=0$, the area condition $\omega(A)>0$ implies that exactly one of the $b_i$'s equals $-1$ and the rest are either $0$ or $1$. For $a=3$, exactly one of the $b_i$'s equals either $2$ or $-1$, however, the latter 
possibility is ruled out by Lemma 3.3. The rest of the $b_i$'s are either $0$ or $1$. We summarize
the discussions in the following 

\vspace{2mm}

\noindent{\bf Observation: }{\it Let $A=aH-\sum_{i=1}^N b_i E_i$ be the class of a symplectic 
$(-\alpha)$-sphere where $a\leq 3$. Then $A$ must take the following expression
$$
A=aH-(a-1)E_{j_1}-E_{j_2}-\cdots-E_{j_{2a+\alpha}}.
$$}

If $a>3$ but is small, the possibilities for the values of $b_i$ can be easily determined.
However, when $a$ is large, though there are only finitely many solutions for the $b_i$'s for a fixed
value of $a$, it is in general impossible to determine all the possible solutions for the $b_i$'s.
Finally, note that there is no a priori upper bound for the $a$-coefficient in terms of $N$ and $\alpha$.

With this understood, the following technical lemma plays a key role in determining the expression 
of $A$ when the $a$-coefficient is large, for the case where $\alpha=2$ or $3$. 

\begin{lemma}
Let $A=aH-\sum_{i=1}^N b_i E_i$ be any class which satisfies 
$$
A^2=-\alpha, \; c_1(K_\omega)\cdot A=\alpha-2,  \mbox{ where } \alpha=2, 3.
$$
If $a>3$, then there are at least $\alpha+7$ terms in $A$ with non-zero $b_i$-coefficient.
\end{lemma}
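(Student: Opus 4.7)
My plan is to extract two numerical identities from the hypotheses and then combine a Cauchy--Schwarz bound with a short integrality argument. Using $c_1(K_\omega) = -3H + \sum_{i=1}^N E_i$ from the definition of reduced basis, the conditions $A^2 = -\alpha$ and $c_1(K_\omega)\cdot A = \alpha - 2$ translate directly to
$$\sum_{i=1}^N b_i = 3a + \alpha - 2, \qquad \sum_{i=1}^N b_i^2 = a^2 + \alpha.$$
Let $k$ denote the number of indices with $b_i \neq 0$, and write the corresponding nonzero coefficients as $c_1,\ldots,c_k$. The goal is to prove $k \geq \alpha + 7$ whenever $a > 3$, i.e., $a \geq 4$.

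The first step is to apply Cauchy--Schwarz to the $c_j$: $(\sum_j c_j)^2 \leq k \sum_j c_j^2$ gives
$$k \geq \frac{(3a+\alpha-2)^2}{a^2+\alpha}.$$
A direct algebraic check shows that $(3a+\alpha-2)^2 - (\alpha+6)(a^2+\alpha)$ equals $a^2 - 16$ when $\alpha = 2$ and $6a - 26$ when $\alpha = 3$, both of which are strictly positive as soon as $a \geq 5$. Consequently $k > \alpha + 6$, i.e., $k \geq \alpha + 7$, whenever $a \geq 5$.

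The remaining case $a = 4$ is where Cauchy--Schwarz falls short by one, and here I would appeal to the integrality of the $c_j$ via the shifted second moment
$$\sum_{j=1}^k (c_j - 1)^2 = (a^2+\alpha) - 2(3a+\alpha-2) + k,$$
which for $a = 4$ collapses to $k - \alpha - 4$. Suppose for contradiction that $k \leq \alpha + 6$, so that $\sum_j (c_j-1)^2 \leq 2$. Any $c_j \leq -1$ would by itself contribute $(c_j-1)^2 \geq 4$, forcing every $c_j$ to be a positive integer; the nonnegative integers $c_j - 1$ then have total squared sum at most $2$, which forces each $c_j - 1 \in \{0,1\}$ with at most two equal to $1$. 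This yields $\sum_j c_j \leq k + 2 \leq \alpha + 8$, contradicting $\sum_j c_j = 3a + \alpha - 2 = \alpha + 10$.

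The conceptual bottleneck is precisely this borderline $a = 4$ case: Cauchy--Schwarz just misses, and the saving observation is that centering the coefficients at $1$ (rather than at the true mean $(3a+\alpha-2)/k$) produces a sum of squares small enough that integrality of the $c_j$ becomes a genuine constraint. Once this is in place the rest of the argument is just bookkeeping on the two basic identities.
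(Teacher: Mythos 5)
Your proof is correct, and it takes a genuinely different and more elementary route than the paper's. Both arguments are in the end purely arithmetic consequences of the two Diophantine identities $\sum_i b_i = 3a+\alpha-2$ and $\sum_i b_i^2 = a^2+\alpha$, but the paper gets there by a reduction procedure: assuming at most $\alpha+6$ nonzero coefficients, it finds indices $i,j,k$ with $b_i+b_j+b_k>a$, applies the reflection $R_{ijk}$ associated to $H-E_i-E_j-E_k$ (which preserves $A^2$, $c_1(K_\omega)\cdot A$, and does not increase the number of nonzero terms), shows via the estimate $b_i+b_j+b_k-a\leq\sqrt{3}(a-2)+3-a$ that the $a$-coefficient never drops below $2$ in one step, and then derives a contradiction from an explicit case analysis of the terminal classes with $\tilde{a}\in\{2,3\}$. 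You replace all of this with Cauchy--Schwarz, which handles $a\geq 5$ outright since $(3a+\alpha-2)^2-(\alpha+6)(a^2+\alpha)$ equals $a^2-16$ or $6a-26$, together with the shifted second moment $\sum_j(c_j-1)^2=k-\alpha-4$ at the borderline $a=4$, where integrality forces $\sum_j c_j\leq k+2\leq\alpha+8<\alpha+10$. I checked the computations ($\sum b_i=3a+\alpha-2$ from $c_1(K_\omega)=-3H+\sum E_i$, the two discriminant-type expressions, and the $a=4$ bookkeeping) and they are all right. Your argument is shorter, avoids the delicate control of the reduction step, and makes transparent that the conclusion depends only on the two quadratic conditions; the paper's reflection technique has the advantage of being the same machinery reused elsewhere in Section 3 (e.g., in Lemmas 3.3 and 3.4), where genuinely geometric input about symplectic genus is needed.
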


\begin{proof}
We begin by recalling a reduction procedure useful in this kind of problems. For any distinct indices $i,j,k$, we set 
$H_{ijk}:=H-E_i-E_j-E_k$. Then $H_{ijk}$ satisfies the following conditions:
$$
H_{ijk}^2=-2, \; c_1(K_\omega)\cdot H_{ijk}=0, \mbox{ and } \omega(H_{ijk})\geq 0.
$$
Furthermore, there is a reflection $R_{ijk}$ on $H^2(M)$ associated to $H_{ijk}$, which is defined by the following formula:
$$
R_{ijk}(A):=A+ (A\cdot H_{ijk}) H_{ijk}, \; \forall A\in H^2(X). 
$$
To ease the notation, let $\tilde{A}:=R_{ijk}(A)$. Then it is easy to see that 
$$
\tilde{A}^2=A^2, c_1(K_\omega)\cdot \tilde{A}=c_1(K_\omega)\cdot A, \mbox{ and } \tilde{A}\cdot H_{ijk}=-A\cdot H_{ijk}.
$$ 
The last equality implies that 
$$
A=R_{ijk}(\tilde{A})=\tilde{A}+(\tilde{A}\cdot H_{ijk}) H_{ijk}.
$$
Finally, note that the operation $R_{ijk}$ will decrease (resp. increase) the $a$-coefficient in the expression of $A$ 
if and only if $A\cdot H_{ijk}<0$ (resp. $A\cdot H_{ijk}>0$), where $A\cdot H_{ijk}=a-(b_i+b_j+b_k)$.
See \cite{LW} or \cite{BP} for further discussions on this reduction procedure.

With the preceding understood, let $A=aH-\sum_{i=1}^N b_i E_i$ be any class satisfying the conditions in the lemma,
i.e., $A^2=-\alpha$, $c_1(K_\omega)\cdot A=\alpha-2$,  where $\alpha=2, 3$, and assume $a>3$.
Suppose to the contrary that $A$ has no more than $\alpha+6$ terms in the expression with non-zero 
$b_i$-coefficient. 

\vspace{2mm}

{\bf Claim:} {\it There are distinct indices $i,j,k$ such that (i) $b_i,b_j,b_k$ are positive,  and (ii) $A\cdot H_{ijk}=a-(b_i+b_j+b_k)<0$.
}

\vspace{2mm}

{\bf Proof of Claim:} 
We shall prove by contradiction. But first, we observe that there are at least $3$ terms in 
$A$ with the $b_i$-coefficient positive. To see this, note that the conditions $A^2=-\alpha$, 
$c_1(K_\omega)\cdot A=\alpha-2$ easily imply that
$$
\sum_{i=1}^N b_i(b_i-1)=(a-1)(a-2).
$$
Since $a>3$, it follows that for any $i$, if $b_i>0$, then $b_i\leq a-1$ must be true. Therefore, 
if there were at most $2$ terms in $A$ with the $b_i$-coefficient positive, then 
$\sum_{i=1}^N b_i\leq 2(a-1)$, which contradicts 
$-3a+\sum_{i=1}^N b_i=c_1(K_\omega)\cdot A=\alpha-2$.

With the preceding understood, suppose the claim is not true. Then it follows that 
$b_i+b_j+b_k\leq a$ holds true for any distinct indices $i,j,k$, where $b_i,b_j,b_k$ 
are not necessarily positive or non-zero.
Consider first the case where $\alpha=2$. Pick a $b_i$-coefficient, say $b_s$, such that 
$b_s>0$. Then we have 
$$
\sum_{i=1}^N b_i=\sum_{i=1}^N b_i+b_s-b_s\leq 3a-b_s\leq 3a-1,
$$
which is a contradiction to $-3a+\sum_{i=1}^N b_i=\alpha-2=0$. 
A similar argument also confirms the claim for $\alpha=3$. 
This finishes off the proof of the claim. 

\vspace{1.5mm}

Now going back to the proof of the lemma, we pick the indices $i,j,k$ given by the claim above, 
and perform the operation $R_{ijk}$ to reduce $A$ to $\tilde{A}:=R_{ijk}(A)$, which continues to obey the conditions on $A$, i.e., 
$$
\tilde{A}^2=-\alpha \mbox { and } c_1(K_\omega)\cdot \tilde{A}=\alpha-2.
$$
Set $c:=b_i+b_j+b_k-a$. We shall derive an upper bound on $c$. To this end, note that 
$$
b_i(b_i-1)+b_j(b_j-1)+b_k(b_k-1)\leq (a-1)(a-2).
$$
Using the inequality $3(b_i^2+b_j^2+b_k^2)\geq (b_i+b_j+b_k)^2$, we obtain
$$
\frac{b_i+b_j+b_k}{3}(\frac{b_i+b_j+b_k}{3}-1)\leq \frac{b_i^2+b_j^2+b_k^2}{3}-\frac{b_i+b_j+b_k}{3}\leq \frac{1}{3}(a-1)(a-2).
$$
Since $a>3$, this gives $\frac{b_i+b_j+b_k}{3}-1\leq \frac{1}{\sqrt{3}}(a-2)$, and consequently, 
$c\leq \sqrt{3}(a-2)+3-a$. It follows that the $a$-coefficient of $\tilde{A}$, denoted by 
$\tilde{a}$, will be at least $2$, because 
$$
\tilde{a}=a-c\geq (2-\sqrt{3})a+2\sqrt{3}-3\geq (2-\sqrt{3})\times 4+2\sqrt{3}-3=5-2\sqrt{3}>1.
$$
Finally,  because $b_i,b_j,b_k$ are non-zero, this operation does not introduce any new terms with non-zero $b_i$-coefficient, so $\tilde{A}$ continues to have no more than $\alpha+6$ terms 
in its expression with non-zero $b_i$-coefficient.

After finitely many steps, we will arrive at a class, continuing to be denoted by $\tilde{A}$, 
whose $a$-coefficient lies in the range $2\leq \tilde{a}\leq 3$. We may assume $\tilde{A}$ is the first class whose $a$-coefficient
lies in this range; in particular, the $a$-coefficient of the previous class, denoted by $A$, obeys $a>3$. We shall examine $\tilde{A}$ according to the value of $\tilde{a}$ below. 
To this end, we denote by $\tilde{b}_i$ the $b_i$-coefficients of $\tilde{A}$. Then it is helpful to observe that $\tilde{b}_i+\tilde{b}_j+\tilde{b}_k-\tilde{a}=-c<0$, because of the relation 
$\tilde{A}\cdot H_{ijk}=-A\cdot H_{ijk}$. 

Suppose $\tilde{a}=2$. Then $\tilde{A}=2H-E_{j_1}-\cdots-E_{j_{\alpha+4}}$. The condition $a>3$ requires that in this case
we must have $c\geq 2$, and consequently, $\tilde{b}_i+\tilde{b}_j+\tilde{b}_k-\tilde{a}=-c\leq -2$.
Since the $b_i$-coefficients of $\tilde{A}$ are non-negative and $\tilde{a}=2$, it follows that 
$\tilde{b}_i=\tilde{b}_j=\tilde{b}_k=0$ and $c=\tilde{a}=2$ must be true. In particular,
the indices $i,j,k$ are not appearing in the expression of $\tilde{A}$, and it follows that $A$ takes 
the form
$$
A=4H-2E_i-2E_j-2E_k-E_{j_1}-\cdots-E_{j_{\alpha+4}}, \mbox{ where $j_s\neq i,j,k$,} 
$$
which has $\alpha+7$ terms with non-zero $b_i$-coefficient, contradicting the assumption. 

Suppose $\tilde{a}=3$. Then the expressions for $\tilde{A}$ are 
$$
\tilde{A}=3H-2E_{j_1}-\cdots -E_{j_{\alpha+6}} \mbox{ or } \tilde{A}=3H-E_{j_1}-\cdots 
-E_{j_{\alpha+8}} +E_{j_{\alpha+9}}.
$$
The latter case is ruled out immediately as $\tilde{A}$ has $\alpha+9$ many terms with non-zero 
$b_i$-coefficient. For the former case, we note that with $\tilde{a}=3$, $c\geq 1$, 
$\tilde{b}_i+\tilde{b}_j+\tilde{b}_k\leq 3-1=2$. It follows easily that the following are the only 
possibilities for $\tilde{b}_i, \tilde{b}_j, \tilde{b}_k$:
$$
(\tilde{b}_i, \tilde{b}_j, \tilde{b}_k)=(2,0,0), (1,1,0), (1,0,0), (0,0,0).
$$
With this understood, note that $b_l=\tilde{b}_l+c$ for $l=i,j,k$. Since at least one of 
$\tilde{b}_i, \tilde{b}_j, \tilde{b}_k$ is zero, it follows that the number of
terms in the expression of $A$ with non-zero $b_i$-coefficient is at least $1$ more than the 
number of terms with non-zero $b_i$-coefficient in $\tilde{A}$. Now $\tilde{A}$ has $\alpha+6$ 
many terms of non-zero $b_i$-coefficient, so $A$ must have at least $\alpha+7$ many terms, 
which is a contradiction. 
This completes the proof of the lemma. 

\end{proof}

With the preceding understood, we now state a lemma which is of fundamental importance
for our project on symplectic Calabi-Yau $4$-manifolds. The key observation is that, when combined with Lemma 3.5, the area condition $\omega(A)<-c_1(K_\omega)\cdot [\omega]$ will 
give severe constraints on the $a$, $b_i$-coefficients of $A$; in particular, it implies an upper bound on the $a$-coefficient of $A$ in terms of $N$ for the case of $\alpha=2$ or $3$. 

\begin{lemma}
Let $A=aH-\sum_{i=1}^N b_i E_i$ be the class of a symplectic $(-\alpha)$-sphere where 
$\alpha=2$ or $3$, such that $\omega(A)<-c_1(K_\omega)\cdot [\omega]$. 
Then $A$ must be of the following form
$$
A=aH-(a-1)E_{j_1}-E_{j_2}-\cdots-E_{j_{2a+\alpha}}.
$$
In particular, $a\leq \frac{1}{2}(N-\alpha)$.
\end{lemma}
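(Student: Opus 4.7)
The proof splits according to the size of the $a$-coefficient. For $a \leq 3$, the stated form is immediate: the case $0 \leq a \leq 3$ is handled by the Observation preceding the lemma, while the case $a < 0$ forces (via Lemma 3.4) $\alpha = 3$ and $a = -1$ with $A = -H + 2E_{1} - E_{j_{2}}$, which already matches the stated form (since $-(a-1) = 2$ and $2a + \alpha = 1$). It thus remains to address $a > 3$.

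For $a > 3$, Lemma 3.3(1) yields $b_{i} \geq 0$. Substituting $-c_{1}(K_{\omega}) = 3H - \sum_{i} E_{i}$ into the area condition and separating by the value of $b_{i}$ rewrites it as
\[
(a - 3)\,\omega(H) \; + \; \sum_{b_{i} = 0}\omega(E_{i}) \; < \; \sum_{i \in P}(b_{i}-1)\,\omega(E_{i}), \qquad (\dagger)
\]
where $P := \{i : b_{i} \geq 2\}$. The plan is to bound the right-hand side of $(\dagger)$ by $(\max_{i\in P}(b_{i}-1))\,\omega(H)$ using the reduced basis area inequalities $\omega(E_{i}) + \omega(E_{j}) < \omega(H)$ for $i \neq j$ and $\omega(E_{i}) + \omega(E_{j}) + \omega(E_{k}) \leq \omega(H)$ for distinct $i,j,k$. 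Combined with $(\dagger)$, this forces $\max b_{i} \geq a - 1$. Since the identity $\sum b_{i}(b_{i}-1) = (a-1)(a-2)$ implies $b_{i} \leq a - 1$ for all $i$, we conclude that some $b_{j_{1}} = a - 1$; then $b_{j_{1}}(b_{j_{1}}-1) = (a-1)(a-2)$ already saturates the sum, so $b_{i} \in \{0,1\}$ for $i \neq j_{1}$. The identity $\sum b_{i} = 3a + \alpha - 2$ leaves exactly $2a + \alpha - 1$ further indices $j_{2}, \ldots, j_{2a+\alpha}$ with $b_{i} = 1$, producing the stated form, and distinctness of $j_{1}, \ldots, j_{2a+\alpha} \in \{1, \ldots, N\}$ gives $a \leq (N - \alpha)/2$.

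The principal technical difficulty lies in the grouping argument bounding the right-hand side of $(\dagger)$. For $|P| \leq 3$, this is straightforward: sorting $P$ in decreasing order of $b_{i}$-values, one has
$\sum_{i\in P}(b_{i}-1)\omega(E_{i}) \leq (\max_{i \in P}(b_{i}-1))\bigl(\sum_{i \in P}\omega(E_{i})\bigr) \leq (\max_{i \in P}(b_{i}-1))\,\omega(H)$
by direct application of the three-index inequality. For $|P| \geq 4$ the naive grouping breaks down, and the argument must additionally invoke either higher exceptional classes from $\E_{\omega}$ (such as $2H - E_{i_{1}} - \cdots - E_{i_{5}}$, yielding $\sum_{k=1}^{5}\omega(E_{i_{k}}) < 2\omega(H)$), or else the reduction operation $R_{ijk}$ from the proof of Lemma 3.5 applied iteratively; this operation preserves $A^{2}$ and $c_{1}(K_{\omega}) \cdot A$, and since $\omega(H_{ijk}) \geq 0$ it also preserves the area condition, allowing a reduction to the tractable range $a \leq 3$ where the Observation closes the argument.
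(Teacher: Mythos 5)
Your skeleton is right where it overlaps the paper — reduce to $a>3$ via the Observation and Lemma 3.4, use the area hypothesis to force some $b_{j_1}=a-1$, then let the identities $\sum_i b_i(b_i-1)=(a-1)(a-2)$ and $\sum_i b_i=3a+\alpha-2$ produce the stated form — but the central step is incomplete exactly where you flag it. The bound $\sum_{i\in P}(b_i-1)\,\omega(E_i)\leq(\max_{i\in P}(b_i-1))\,\omega(H)$ genuinely fails for $|P|\geq 4$ (four indices with $b_i=2$ would need $\sum_{k=1}^{4}\omega(E_{i_k})\leq\omega(H)$, which no reduced-basis inequality supplies), and numerical solutions with $|P|\geq4$ and no $b_i=a-1$ do exist (e.g.\ $a=5$, $\alpha=2$, $b=(3,2,2,2,1,1,1,1,1,1)$), so this case cannot be dismissed. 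Neither proposed repair is carried out, and the second is obstructed: after applying $R_{ijk}$ the result is merely a class with the correct values of $A^2$ and $c_1(K_\omega)\cdot A$ — it need not be represented by a symplectic sphere — so the Observation (whose $a=3$ case invokes Lemma 3.3 and hence requires an embedded symplectic representative) does not apply to it, and even if it did you would still have to propagate its conclusion back through the reduction steps to the original $A$. (Minor slip: in the $a<0$ case Lemma 3.4 gives $A=-H+2E_{j_1}$ with no further $E$-terms, since $s=\alpha-2|a|=1$.)

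The paper closes the $a>3$ case with an input you never use: Lemma 3.5, which guarantees at least $\alpha+7\geq 9$ nonzero $b_i$-coefficients. Setting $b_i^{+}=\max(1,b_i)$ and $n=\#\{i:b_i\neq0\}$, this yields $\sum_i(b_i^{+}-1)=3a+\alpha-2-n\leq 3(a-3)$; if no $b_i$ equals $a-1$ then also $b_i^{+}-1\leq a-3$ for every $i$, and these two bounds together permit writing $(a-3)H-\sum_i(b_i^{+}-1)E_i$ as a sum of $a-3$ classes of the form $H$, $H-E_i$, $H-E_i-E_j$, $H-E_i-E_j-E_k$ with distinct indices in each term, all of nonnegative area. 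Since $A=-c_1(K_\omega)+\bigl((a-3)H-\sum_i(b_i^{+}-1)E_i\bigr)+\sum_i\max(0,1-b_i)E_i$, it follows that $\omega(A)\geq -c_1(K_\omega)\cdot[\omega]$, contradicting the hypothesis. Your inequality $(\dagger)$ is a rearrangement of the same area comparison, but without the count from Lemma 3.5 you have no control over how the excess $\sum_{i\in P}(b_i-1)$ is distributed, which is precisely what defeats the grouping when $|P|\geq 4$. To complete your argument, import Lemma 3.5 and the decomposition above (or establish an equivalent distribution bound).
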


\begin{proof}
It suffices to only consider the situation where $a>3$. 
First, note that by Lemma 3.3, $b_i\geq 0$ for all $i$. 
If we let $b_i^{+}=\max(1, b_i)$, then $b_i^{+}=b_i$ 
when $b_i>0$ and $b_i^{+}=1$ when $b_i=0$. Next we observe that 
$-3a+\sum_{i=1}^{N}b_i=\alpha-2$. Let $n$ be the number of $b_i$'s which are non-zero. Then 
because $n\geq \alpha+7$ by Lemma 3.5, we have
$$
\sum_{i=1}^N  (b_i^{+}-1)=\sum_{i=1}^N b_i -n=3a+\alpha-2-n\leq 3(a-3).
$$

On the other hand, we claim that there must be one $b_i$ such that $b_i=a-1$. Suppose to the
contrary that this is not true. Then for each $i$, $b_i^{+}-1\leq a-3$ must be true. 
With this understood, note that the class $(a-3) H-\sum_i (b_i^{+}-1) E_i$ can be written as a sum of 
classes of the form $H$, $H-E_i$, $H-E_i-E_j$, or $H-E_i-E_j-E_k$, where distinct indices 
stand for distinct classes, because $\sum_{i=1}^{N} (b_i^{+}-1)\leq 3(a-3)$, and for each $i$, $b_i^{+}-1\leq a-3$. However, observe that we can write 
$$
A=-c_1(K_\omega) + (a-3) H-\sum_{i=1}^{N} (b_i^{+}-1) E_i 
+\sum_{i=1}^{N} \max (0,1-b_i) E_i, 
$$
from which it follows easily that $\omega(A)\geq -c_1(K_\omega)\cdot [\omega]$, contradicting the area assumption in the lemma. Hence the claim.

Now we observe that in the equation $\sum_{i=1}^N b_i(b_i-1)=(a-1)(a-2)$ which is satisfied by
the $a$, $b_i$-coefficients of $A$, if $b_i=a-1$ for some $i$, then the rest of the $b_i$'s 
are all equal to either $0$ or $1$. With this understood, the equation 
$-3a+\sum_{i=1}^N b_i=\alpha-2$ implies that the number of $b_i$'s equaling 
$1$ must be $2a+\alpha-1$. It follows immediately that $A$ must take the expression 
$$
A=aH-(a-1)E_{j_1}-E_{j_2}-\cdots-E_{j_{2a+\alpha}}.
$$
\end{proof}

We remark that if $A$ is the class of a symplectic $(-\alpha)$-sphere whose $a$-coefficient 
satisfies $a>3$ and there are at least $\alpha+7$ terms in the expression of $A$ having non-zero
$b_i$-coefficients, then the same proof shows that the condition 
$\omega(A)<-c_1(K_\omega)\cdot [\omega]$ would imply that $A$ also takes the special
expression in Lemma 3.6. However, in general it is not true that there are always 
at least $\alpha+7$ terms having non-zero $b_i$-coefficients in the expression of a 
symplectic $(-\alpha)$-sphere. For example, the following class, which has only $10$ terms with 
non-zero $b_i$-coefficients, can be represented by a symplectic $(-4)$-sphere (cf. \cite{DLW}):
$A=6H-2E_{j_1}-2E_{j_2}-\cdots-2E_{j_{10}}$.

\section{Non-existence of certain symplectic configurations}
In this section, we give several results concerning nonexistence of certain configurations 
of symplectic surfaces in rational $4$-manifolds. To prove these results, we examine the 
possible homological expressions of the components in the configurations in a certain reduced 
basis, using the constraints established in Section 3, and show that the configurations 
can not exist even at the homology level.
These nonexistence results will then be used in Section 5 to eliminate several possibilities of the fixed-point set 
structure obtained in Section 2 concerning the $2$-dimensional fixed components, which have 
resisted all the known obstructions available so far. 

First, we prove a lemma which allows us to impose certain auxiliary area conditions.

\begin{lemma}
Let $(X,\omega)$ be a symplectic $4$-manifold, and let $D=\sqcup_i D_i\subset X$, where each $D_i=\cup_j C_{ij}$ is a configuration 
of symplectic surfaces intersecting transversely and positively according to a negative definite plumbing graph $\Gamma_i$. Then for any
given collection of positive real numbers $\{a_{ij}\}$, there exists a $\delta_0>0$, such that for any choice of $\{\delta_i\}$ where 
$0<\delta_i<\delta_0$, there is a symplectic $4$-manifold $(\tilde{X},\tilde{\omega})$ with 
$D\subset \tilde{X}$, which has the following significance:
\begin{itemize}
\item $D=\sqcup_i D_i$ is a set of symplectic configurations in $(\tilde{X},\tilde{\omega})$, and there is a diffeomorphism $\psi: \tilde{X}\rightarrow X$ which is identity on $D$, such that 
$\psi^\ast c_1(K_\omega)=c_1(K_{\tilde{\omega}})$,
\item the $\tilde{\omega}$-symplectic area of each surface $C_{ij}$ equals $\delta_i a_{ij}$,
i.e.,  $\tilde{\omega}(C_{ij})=\delta_i a_{ij}$. 
\end{itemize}
\end{lemma}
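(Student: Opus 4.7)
Plan: The strategy is to modify $\omega$ inside pairwise disjoint tubular neighborhoods of the configurations $D_i$, exploiting that each intersection matrix $Q_i := ([C_{ij}] \cdot [C_{ik}])_{jk}$ is invertible because $\Gamma_i$ is negative definite. First I would construct, for each component $C_{ij}$, a closed $2$-form $\tau_{ij}$ on $X$ supported in an arbitrarily small tubular neighborhood of $C_{ij}$ and representing a Thom class of its normal bundle, with the standard bidisk behavior near the transverse intersections with other components of $D_i$. These forms may be grouped inside pairwise disjoint closed tubular neighborhoods $N_i \supset D_i$ and satisfy
$$\int_{C_{i'k}} \tau_{ij} = \delta_{ii'}\, (Q_i)_{jk}.$$
Setting $\vec s_i := (\omega(C_{ij}))_j$ and $\vec a_i := (a_{ij})_j$, I would solve $Q_i \vec \lambda_i = \delta_i \vec a_i - \vec s_i$ (uniquely, as $Q_i$ is invertible) and put
$$\tilde\omega := \omega + \sum_{i,j} \lambda_{ij}\, \tau_{ij}.$$
Then $\tilde\omega$ is closed, equal to $\omega$ off $\bigcup_i N_i$, and by construction $\tilde\omega(C_{ij}) = \delta_i a_{ij}$.

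The main task is to verify that $\tilde\omega$ is non-degenerate throughout each $N_i$ and positive on each $C_{ij}$. Here the key input is a symplectic neighborhood theorem for negative definite symplectic plumbings: a regular neighborhood of $D_i$ carries a symplectic form uniquely determined, up to symplectomorphism fixing a collar of the boundary, by the graph $\Gamma_i$ together with the tuple of component areas. For $\delta_i < \delta_0$ sufficiently small, I would exhibit a standard symplectic model on $N_i$ realizing the areas $\delta_i a_{ij}$ and agreeing with $\omega$ near $\partial N_i$; pasting this model onto $(X \setminus \bigcup_i N_i, \omega)$ and applying Moser's trick then produces a diffeomorphism $\psi$ fixing $D$ pointwise that conjugates the pasted form to the expression for $\tilde\omega$ displayed above. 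The role of the smallness hypothesis $\delta_i < \delta_0$ is precisely to allow the standard local model with prescribed small areas to be built so that it matches $\omega$ on the collar.

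The equality $\psi^\ast c_1(K_\omega) = c_1(K_{\tilde\omega})$ follows from deformation invariance of the canonical class: running the construction with the one-parameter family of targets $t\delta_i$ for $t \in [0,1]$ yields a smooth path of symplectic forms joining (the pullback of) $\omega$ to $\tilde\omega$, along which the canonical class is constant. The hardest step in this plan is the non-degeneracy verification of the previous paragraph, which hinges essentially on the negative definiteness of the $\Gamma_i$'s; without it, neither the solvability of $Q_i \vec\lambda_i = \cdot$ nor the area-parametrized symplectic normal form for a neighborhood of $D_i$ would be available, and the freedom to prescribe the areas would be lost.
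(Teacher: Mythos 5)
Your overall strategy -- leave $\omega$ alone outside pairwise disjoint neighborhoods $N_i$ of the $D_i$ and replace each $(N_i,\omega)$ by a standard convex model with the rescaled areas -- is the same as the paper's, but the two load-bearing steps are not actually carried out, and the explicit formula you lead with does not work. The form $\tilde\omega=\omega+\sum_{i,j}\lambda_{ij}\tau_{ij}$ only pins down a cohomology class: the coefficients $\lambda_{ij}=Q_i^{-1}(\delta_i\vec a_i-\vec s_i)$ do \emph{not} tend to zero as $\delta_i\to 0$ (they tend to $-Q_i^{-1}\vec s_i\neq 0$), so this is a large perturbation of $\omega$; moreover $\tau_{ij}|_{C_{ij}}$ must integrate to the negative number $C_{ij}^2$, so there is no reason for $\tilde\omega$ to restrict positively to $C_{ij}$, let alone be nondegenerate on $N_i$. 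You implicitly concede this by switching to a pasting construction, but then the final Moser step -- conjugating the pasted form to the displayed $\tilde\omega$ -- is both unjustified (Moser needs a path of \emph{symplectic} forms, and $\tilde\omega$ has not been shown symplectic) and unnecessary: the pasted form should itself be the output of the lemma. Relatedly, your deformation-invariance argument for $\psi^\ast c_1(K_\omega)=c_1(K_{\tilde\omega})$ uses the family $t\delta_i\vec a_i$, which does not pass through the original area vector $\vec s_i$ unless $\vec s_i$ happens to be proportional to $\vec a_i$.

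The remaining content is then exactly the ``symplectic neighborhood theorem'' you invoke as a black box, and as stated (uniqueness up to symplectomorphism fixing a collar, determined by $\Gamma_i$ and the areas) it is stronger than what is available in the literature and stronger than what is needed. The paper's actual mechanism is: after making the intersections $\omega$-orthogonal, negative definiteness of $\Gamma_i$ gives a neighborhood $U_i$ with \emph{convex} contact boundary $(L_i,\alpha_i)$ (Gay--Stipsicz), and the induced contact structure is the Milnor fillable one, hence independent of the areas up to contactomorphism (Caubel--N\'emethi--Popescu-Pampu, Park--Stipsicz). One then builds a second convex model $(U_i',\omega_i')$ with areas $a_{ij}$, identifies its boundary contact form as $e^{f_i}\alpha_i$, and glues $(U_i',\delta_i\omega_i')$ back into $X\setminus\bigcup_i U_i$ through a piece of symplectization $\{(x,t): C_i+f_i(x)\le t\le 0\}$ with $C_i=\log\delta_i$; this collar exists precisely when $C_i+f_i<0$, i.e.\ when $\delta_i<\delta_0:=\bigl(\max_i\sup_{L_i}e^{f_i}\bigr)^{-1}$, which is where $\delta_0$ comes from. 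Your sentence ``the role of the smallness hypothesis is precisely to allow the local model to match $\omega$ on the collar'' is the right intuition, but without the convexity/Milnor-fillability input and the symplectization interpolation it remains an assertion rather than a proof; this is the genuine gap.
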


\begin{proof}
First of all, we may assume without loss of generality that the intersections of $C_{ij}$ are $\omega$-orthogonal, because we can always slightly perturb the symplectic surfaces to achieve this (cf. \cite{Gompf}). With this understood, since the plumbing graph $\Gamma_i$ is negative definite, 
each configuration $D_i$ has a regular neighborhood $U_i$ such that $L_i:=\partial U_i$ 
is a convex contact boundary (in the strong sense), cf. \cite{GS}. Furthermore, by a theorem of
Park and Stipsicz \cite{PS}, the 
contact structure on $L_i$ is the Milnor fillable contact structure (cf. \cite{CNP}). We denote by $\alpha_i$ the contact form on $L_i$, where $\omega=d\alpha_i$ on $L_i$. It is clear that we can arrange so that $\{U_i\}$ are disjoint in $X$. 

Now for any given collection of positive real numbers $\{a_{ij}\}$, let $({U}_i^\prime, \omega^\prime_i)$ be a convex regular neighborhood 
of $D_i=\cup_j C_{ij}$ constructed in \cite{GS} such that $\omega^\prime_i(C_{ij})=a_{ij}$. Fixing an identification 
$\partial {U}_i^\prime=L_i$, we let $\alpha_i^\prime$ denote the contact form on $L_i$ such that $\omega^\prime_i =d\alpha_i^\prime$ on $L_i$. Then by \cite{PS}, $\alpha_i^\prime=e^{f_i}\alpha_i$ for some smooth function $f_i$ on $L_i$. With this understood, we set
$\delta_0>0$ by the condition $\delta^{-1}_0:= \max_i \{\sup_{x\in L_i}e^{f_i(x)}\}$.

Given any $\{\delta_i\}$ where $0<\delta_i<\delta_0$, we set $C_i:=\log \delta_i$. Then it is easy to see that $C_i+f_i(x)<0$ for any 
$x\in L_i$. With this understood, we let 
$$W_i:=\{(x,t)\in L_i\times\R | C_i+f_i(x)\leq t\leq 0\},
$$ 
given with the symplectic structure $d(e^t\alpha_i)$. 
We define $(\tilde{U_i}, \tilde{\omega}_i)$ to be the symplectic $4$-manifold obtained by gluing 
$(U_i^\prime, \delta_i \omega_i^\prime)$ to $W_i$ via the contactomorphism sending 
$x\in L_i=\partial U_i^\prime$ to $(x, C_i+f_i(x))\in W_i$.
Note that each $(\tilde{U_i}, \tilde{\omega}_i)$ has a convex contact boundary $\partial \tilde{U}_i=L_i$
where $\tilde{\omega}_i=d\alpha_i$ on $L_i$. With this understood, we define $(\tilde{X}, \tilde{\omega})$ to be the symplectic $4$-manifold obtained by removing 
$\cup_i U_i$ from $X$ and then gluing back $\cup_i \tilde{U}_i$ along $\cup_i L_i$. It is easy
to see that there is a diffeomorphism $\psi: \tilde{X}\rightarrow X$ which is identity on $D$, such that 
$\psi^\ast c_1(K_\omega)=c_1(K_{\tilde{\omega}})$, and the $\tilde{\omega}$-symplectic area of each surface $C_{ij}$ equals $\delta_i a_{ij}$. This finishes the proof of the lemma. 

\end{proof}

The second lemma contains two useful observations. In particular, the first observation 
implies that in a configuration of symplectic surfaces there is at most one symplectic sphere with negative $a$-coefficient.

\begin{lemma}
$\left(1\right)$ Let $A_1,A_2$ be the classes of two symplectic spheres whose $a$-coefficients are negative.
Then $A_1\cdot A_2<0$.

$\left(2\right)$ Let $B=aH-\sum_{i=1}^N b_iE_i$ be a nonzero class satisfying 
$B^2=c_1(K_\omega)\cdot B=0$. 
If $a\geq 0$, then $a\geq 3$. Moreover, for $a=3$, the following are
the only possible expressions
for $B$:
$$
B=3H-E_{j_1}-\cdots-E_{j_9}.
$$
\end{lemma}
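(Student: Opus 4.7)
For Part (1), the plan is to invoke Lemma 3.4 for both $A_1$ and $A_2$. That lemma pins down any symplectic sphere class with negative $a$-coefficient to the form
\[
A_k \;=\; a_k H + (|a_k|+1) E_1 - \sum_{i \in S_k} E_i, \qquad S_k \subseteq \{2, \dots, N\},
\]
for $k=1,2$, with the \emph{same} distinguished class $E_1$ (the unique one of maximal $\omega$-area in the reduced basis). Expanding $A_1 \cdot A_2$ using the standard intersection form, the $H^2$-contribution $|a_1||a_2|$ is exactly cancelled by the leading part of $(|a_1|+1)(|a_2|+1)E_1^2$, leaving
\[
A_1\cdot A_2 \;=\; -|a_1| - |a_2| - 1 - |S_1\cap S_2| \;<\; 0.
\]
This step is a direct calculation; I anticipate no genuine obstacle.

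For Part (2), my plan is to reduce the statement to elementary integer inequalities. The conditions $B^2 = 0$ and $c_1(K_\omega)\cdot B = 0$ translate immediately into
\[
\sum_i b_i^2 \;=\; a^2 \qquad\text{and}\qquad \sum_i b_i \;=\; 3a.
\]
Let $n$ denote the number of indices with $b_i \neq 0$. Cauchy-Schwarz, in the form $\bigl(\sum_i b_i\bigr)^2 \leq \bigl(\sum_i |b_i|\bigr)^2 \leq n \sum_i b_i^2$, gives $9a^2 \leq n a^2$, so $n \geq 9$ whenever $a \neq 0$; while the integrality bound $b_i^2 \geq 1$ for each nonzero $b_i$ yields $n \leq \sum_i b_i^2 = a^2$. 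Together these force $a \geq 3$ under the hypothesis $a \geq 0$. The boundary case $a = 0$ is dispatched separately: it would force $\sum b_i^2 = 0$, hence $B = 0$, contradicting the nonvanishing hypothesis.

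For the equality case $a = 3$, both Cauchy-Schwarz-type inequalities saturate: $n = 9$, every nonzero $b_i$ must satisfy $|b_i| = 1$, and the identity $|\sum b_i| = \sum |b_i|$ also saturates, forcing all nonzero $b_i$ to share a common sign. The equation $\sum b_i = 9$ then pins each of them down to $+1$, producing the asserted expression $B = 3H - E_{j_1} - \cdots - E_{j_9}$. The only point requiring any care is the possible negativity of some $b_i$'s, which is cleanly handled by the absolute-value insertion before Cauchy-Schwarz; I do not foresee a substantive obstacle.
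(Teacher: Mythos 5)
Your proposal is correct. Part (1) is essentially identical to the paper's argument: both rest on Lemma 3.4(2), whose crucial consequence is that the two classes share the \emph{same} distinguished term $(|a_k|+1)E_1$, and the resulting computation $A_1\cdot A_2=-(|a_1|+|a_2|+1)-|S_1\cap S_2|<0$ is exactly the paper's bound. For part (2) you take a slightly different elementary route to the same conclusion: after the common reduction to $\sum_i b_i^2=a^2$ and $\sum_i b_i=3a$, the paper simply observes that $a(a-3)=\sum_i b_i(b_i-1)\geq 0$ by integrality, which gives $a\geq 3$ at once and, in the equality case $a=3$, immediately forces every $b_i\in\{0,1\}$; you instead run Cauchy--Schwarz through $|b_i|$ to get $9\leq n\leq a^2$, and then need the extra saturation step ($|b_i|=1$ plus the sign argument from $|\sum b_i|=\sum|b_i|$) to pin down the $a=3$ case. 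Both are airtight; the paper's version is a one-liner that handles existence and classification simultaneously, while yours cleanly isolates where the count of nine nonzero coefficients comes from.
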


\begin{proof}
For (1), let $a_1,a_2$ be the $a$-coefficients of $A_1,A_2$ respectively, which are negative
by assumption. Then it follows easily from the expression in Lemma 3.4 that
$$
A_1\cdot A_2\leq a_1a_2-(|a_1|+1)(|a_2|+1)=-(|a_1|+|a_2|+1)<0.
$$

For (2), we first note that $B\neq 0$ and $B^2=0$ imply easily that $a\neq 0$ in $B$.
With this understood, we note that the conditions $B^2=c_1(K_\omega)\cdot B=0$ are equivalent to
$$
a^2-\sum_{i=1}^Nb_i^2=-3a+\sum_{i=1}^N b_i=0.
$$
It follows easily that $a(a-3)=\sum_{i=1}^N b_i(b_i-1)\geq 0$. With the assumption that $a\geq 0$,
it follows immediately that $a\geq 3$. Moreover, if $a=3$, each $b_i$ must be either 
$0$ or $1$, from which the expression of $B$ follows easily. This finishes the proof of the lemma.

\end{proof}

With these preparations, we now prove the aforementioned nonexistence results. 

\begin{proposition}
Let $\{B_i\}$ be a nonempty set of disjoint symplectic surfaces in $X=\C\P^2\# 10 \overline{\C\P^2}$,
where there is  at most one spherical component, and $F_1,F_2,F_3$ be a disjoint union of 
symplectic $(-3)$-spheres in the complement of $B_i$, such that 
$$
c_1(K_X)=-\frac{2}{3}\sum_i B_i-\frac{1}{3}(F_1+F_2+F_3).
$$ 
Suppose $F_{4,1}$, $F_{4,2}$ are a pair of symplectic $(-2)$-spheres in the complement of $B_i$ 
and $F_1,F_2,F_3$, such that $F_{4,1}$, $F_{4,2}$ intersect transversely and positively at one point. 
Then $\{B_i\}$ must consist of one component which is a torus. 
\end{proposition}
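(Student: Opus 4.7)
The plan is to extract increasingly restrictive information about $\{B_i\}$ by combining the numerical data encoded in the canonical class formula with the area-condition techniques of Sections~3 and~4.

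First, a purely numerical reduction. Pairing the given expression for $c_1(K_X)$ with each $B_i$ and using disjointness yields $c_1(K_X)\cdot B_i=-\tfrac{2}{3}B_i^2$; adjunction then gives $B_i^2=6(g(B_i)-1)$. Since $c_1(K_X)^2=9-10=-1$ for $X=\C\P^2\#10\overline{\C\P^2}$, expanding $c_1(K_X)^2$ via the given formula and exploiting disjointness forces $\sum_i B_i^2=0$. Combined with the hypothesis of at most one spherical component among the $B_i$, this leaves two cases: (a) every $B_i$ is a torus, or (b) $\{B_i\}$ contains exactly one $(-6)$-sphere $B_*$, one genus-$2$ surface $B^\#$ with $(B^\#)^2=6$, and possibly tori.

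Second, a signature argument restricts the tori. The span $V$ of $\{F_1,F_2,F_3,F_{4,1},F_{4,2}\}$ has Gram determinant $(-3)^3\cdot 3=-81\neq 0$, so it is a $5$-dimensional negative definite subspace of $H^2(X,\mathbb{R})$; consequently $V^\perp$ has signature $(1,5)$ and contains every $B_i$ by disjointness. In case (b), $B^\#$ must fill the unique positive direction of $V^\perp$, so any torus is confined to the $4$-dimensional negative definite subspace $(B^\#)^\perp\cap B_*^\perp\cap V^\perp$; since a negative definite subspace has no nonzero isotropic vectors, case (b) reduces to $\{B_i\}=\{B_*,B^\#\}$. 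In case (a), the same reasoning forces all tori into a single $1$-dimensional isotropic subspace of $V^\perp$, hence $B_i=m_iB'$ for a common primitive class $B'$ with $(B')^2=c_1(K_\omega)\cdot B'=0$ and positive integers $m_i$; set $M=\sum m_i\geq|\{B_i\}|$.

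Third, eliminate case (b). Apply Lemma~4.1 to the negative definite configuration $F_1\sqcup F_2\sqcup F_3\sqcup(F_{4,1}\cup F_{4,2})$ to shrink its areas, so that $\omega(F_{4,k})<-c_1(K_X)\cdot[\omega]$ (the latter remains positive since it is dominated by the untouched areas of $B_*$ and $B^\#$). By Lemma~3.6 each $F_{4,k}$ and each $F_m$ has nonnegative $a$-coefficient with the explicit normal form recorded there. Since $(B_*+B^\#)^2=c_1(K_\omega)\cdot(B_*+B^\#)=0$, Lemma~4.2(2) gives $a(B_*+B^\#)\geq 3$; Lemma~3.3(2) applied to $B^\#$ (genus~$2$) yields $a(B^\#)\geq 4$, and Lemmas~3.3 and~3.4 restrict $a(B_*)\in\{-2,-1,0,1,2\}$. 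The identity $\sum F_m=9H-3\sum_iE_i-2(B_*+B^\#)$ forces $a(\sum F_m)=9-2a(B_*+B^\#)\geq 0$, so $a(B_*+B^\#)\in\{3,4\}$; Lemma~4.2(2) for $a=3$ (or direct solution of the isotropy equations for $a=4$) then pins down $B_*+B^\#$ explicitly, and subtracting the Lemma~3.4 normal form of $B_*$ yields the class of $B^\#$. A direct case analysis of the indices supporting each $F_{4,k}=a_kH-(a_k-1)E_{p_k}-\sum_{s\in S_k}E_s$, subject to $F_{4,k}\cdot B_*=F_{4,k}\cdot F_m=0$, then shows that the unique index common to $F_{4,1}$ and $F_{4,2}$ required by $F_{4,1}\cdot F_{4,2}=1$ cannot coexist with disjointness from $B_*$, a contradiction.

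Finally, case (a) is concluded by the same area trick: $\sum F_m=9H-3\sum_iE_i-2MB'$ gives $a(\sum F_m)=9-2Ma(B')\geq 0$ by Lemma~3.6, while $a(B')\geq 3$ by Lemma~4.2(2), whence $6M\leq 2Ma(B')\leq 9$ and $M\leq 3/2$. Thus $M=1$ and $|\{B_i\}|=1$, giving a single torus, as desired. The main obstacle is the case analysis in the third step, where the interaction between the transverse intersection $F_{4,1}\cdot F_{4,2}=1$ and the disjointness from the $(-6)$-sphere $B_*$ must be tracked index by index using the normal forms of Lemmas~3.4 and~3.6.
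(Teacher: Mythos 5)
Your first, second, and fourth steps are sound, and your second step is actually a nicer argument than the paper's: the paper reduces the component count by tallying contributions of each $B_i$ to the $a$-coefficient of $-c_1(K_X)$ (via Lemmas 3.3, 3.4 and 4.2), whereas your observation that the span of $F_1,F_2,F_3,F_{4,1},F_{4,2}$ is a $5$-dimensional negative definite subspace, so that the $B_i$ live in a lattice of signature $(1,5)$ where pairwise-orthogonal isotropic classes are proportional, gets the same conclusions more transparently. (Minor quibble in step 4: Lemma 3.6 does not give $a(\sum F_m)\geq 0$, since its normal form still permits a $(-3)$-sphere of class $-H+2E_1$; but Lemma 4.2(1) caps this at one such sphere, so $a(\sum F_m)\geq -1$ and your inequality $6M\leq 2Ma(B')\leq 10$ still yields $M=1$.)

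The genuine gap is your third step, which is where essentially all of the content of the proposition lives and which you only assert. Three concrete problems. First, the claim that Lemma 3.6 forces each $F_m$ to have nonnegative $a$-coefficient is false: the area condition is compatible with one $F_m$ having class $-H+2E_1$, and this possibility must be confronted separately (the paper's Cases (3) and (4) exist precisely to kill it, via a parity argument on intersection numbers with $B^\#$ or $B_*$). Second, your description of the final contradiction --- that $F_{4,1}\cdot F_{4,2}=1$ forces a ``unique common index'' incompatible with disjointness from $B_*$ --- misrepresents the combinatorics: two classes in the Lemma 3.6 normal form can meet once with no common index at all (e.g.\ $H-E_1-E_2-E_3$ and $H-E_4-E_5-E_6$), and in the paper the surviving subcases die for a different reason, namely that the short list of classes admissible for $F_{4,1},F_{4,2}$ turns out to be pairwise \emph{orthogonal}, so no pair can meet once. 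Third, and most importantly, several of the subcases in the index-chasing (e.g.\ ruling out $a=3$ or $a=2$ for $F_1$ when $a(B_*)=-2$, and pinning down $B^\#$ after $B_*$) cannot be eliminated by intersection numbers alone; one has to invoke the reduced-basis area inequalities repeatedly, writing classes such as $F_1-B_*$ or $F_1+A+c_1(K_X)$ as sums of classes $H-E_i-E_j-E_k$ of nonnegative area to contradict positivity of symplectic area. Your outline makes no provision for these area arguments, and without them the case analysis you defer to does not close.
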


\begin{proof}
First of all, since $c_1(K_X)$ is represented by $F_1,F_2,F_3$ and $B_i$, which are disjoint from
the two $(-2)$-spheres $F_{4,1}$, $F_{4,2}$, it is clear that, by Lemma 4.1, we may assume without
loss of generality that the following area condition holds:
$$
\omega(F_{4,1})=\omega(F_{4,2})<-c_1(K_X)\cdot [\omega].
$$
Then by Lemma 3.6, the $a$-coefficients of $F_{4,1}$, $F_{4,2}$ lie in the range $0\leq a\leq 4$,
and moreover, their classes take the special form in Lemma 3.6. Furthermore, again by Lemma 4.1,
we can also arrange so that $F_1,F_2,F_3$ have the same area, which is sufficiently small,
so that $\omega(F_k)<\omega(B_i)$ for each $i,k$. 

With this understood, we next derive some basic information about $B_i$. First,
$c_1(K_X)=-\frac{2}{3}\sum_i B_i-\frac{1}{3}(F_1+F_2+F_3)$ implies that 
$c_1(K_X)^2=\frac{4}{9}\sum_i B_i^2-1$, and with $X=\C\P^2\# 10 \overline{\C\P^2}$, it follows 
easily that $\sum_i B_i^2=0$. On the other hand, if we denote by $g_i$ the genus of $B_i$, then
the adjunction formula applied to each $B_i$ gives us $-\frac{2}{3}B_i^2+B_i^2=2g_i-2$,
which is equivalent to $B_i^2=6(g_i-1)$ for each $i$. In particular, $B_i^2<0$ if and only if
$B_i$ is spherical, hence by our assumption, there is at most one component $B_i$ with
$B_i^2<0$, and such a component must be a $(-6)$-sphere. 

With the preceding understood, we observe that the proposition follows readily if 
there is no $B_i$ such that $B_i^2<0$. Under this condition, 
it is easy to see that each $B_i$ must be a torus. To see that 
there is only one component in $\{B_i\}$, we note that by Lemma 4.2(2), the $a$-coefficient 
of each $B_i$ is at least $3$. On the other hand, each $B_i$ contributes at least
$\frac{2}{3}\times 3=2$ to the $a$-coefficient of $-c_1(K_X)$, which equals $3$, while 
the total contribution from $F_1,F_2,F_3$ to the $a$-coefficient of $-c_1(K_X)$ is at least 
$\frac{1}{3}\times (-1)=-\frac{1}{3}$ by Lemmas 3.4(2) and 4.2(1). 
Hence the claim. Therefore, it boils down to show that there is no $B_i$ such that $B_i^2<0$.

Suppose to the contrary that there is a component, call it $B_1$, such that $B_1^2<0$. 
Since $b_2^{+}(X)=1$, there must be exactly one $B_i$, call it $B_2$, such that $B_2^2>0$, and
the rest of the $B_i$'s have $B_i^2=0$ hence are tori if there is any. Furthermore, as $B_1$ is a
$(-6)$-sphere, $B_2$ must be a genus-$2$ surface with $B_2^2=6$. By a similar argument 
analyzing the contributions of $B_i$ to the $a$-coefficient of $-c_1(K_X)$, using Lemmas 3.3 and 4.2,
it follows easily that $B_1,B_2$ are the only components in $\{B_i\}$. Finally, note that the sum of the 
$a$-coefficients of $F_1,F_2,F_3$ is at most $3$. 

\vspace{1.5mm}

{\bf Case (1):} Suppose $a=-2$ in $B_1$. Then by Lemma 3.4(2), we can write $B_1=-2H+3E_1-E_p$
for some $E_p$. We consider the possibilities for the classes of $F_1,F_2,F_3$. Note that by 
Lemma 4.2(1), $a\geq 0$ in  $F_1,F_2,F_3$. Consequently, $a\leq 3$ in $F_1,F_2,F_3$.
Suppose $a=3$ in one of them, say $F_1$. Then $B_1\cdot F_1=0$ easily implies that 
$$
F_1=3H-2E_1-E_{i_1}-\cdots-E_{i_8},
$$ 
where $E_p$ does not show up in $F_1$. But this is a contradiction:
$$
\omega(F_1-B_1)=\omega(5H-5E_1-E_{i_1}-\cdots-E_{i_8})+\omega(E_p)>0
$$
as the class $5H-5E_1-E_{i_1}-\cdots-E_{i_8}$ can be written as a sum of classes of the form
$H-E_i-E_j$ and $H-E_i-E_j-E_k$, which all have nonnegative areas. If $a=2$ in $F_1$, then 
one can check easily that $F_1\cdot B_1<0$ is always true. If $a=1$ in $F_1$, then $F_1$ must
take the form $F_1=H-E_1-E_p-E_q-E_r$ for some $E_q,E_r$. In particular, since $F_2,F_3$ are
disjoint from $F_1$, we must have $a\neq 1$ in $F_2,F_3$. It follows that both $F_2,F_3$ should have $a=0$. Since the sum of the $a$-coefficients of $F_1,F_2,F_3$ is always an odd number, it follows that the sum must equal $1$. Consequently, we must have 
$$
F_1=H-E_1-E_p-E_q-E_r,
$$
and both $F_2,F_3$ have zero $a$-coefficients. It follows that the sum of the $a$-coefficients
of $B_1,B_2$ equals $4$, so that $a=6$ in $B_2$. 

To proceed further, we write $B_2=6H-\sum_{i=1}^{10} b_i E_i$. Note that $B_2$ has genus $2$,
so that $c_1(K_X)\cdot B_2+B_2^2=2\times 2-2=2$. With $B_2^2=6$, this implies that
$$
-18+\sum_{i=1}^{10} b_i +6=2,\;\; 36-\sum_{i=1}^{10} b_i^2=6.
$$
Consequently, $\sum_{i=1}^{10} b_i(b_i-1)=16$, and as a result, note that $b_i\leq 4$ for each $i$. On the other hand,
$B_2\cdot B_1=0$, which gives $-12+3b_1-b_p=0$. Since $b_1\leq 4$, we must have $b_p=0$ and $b_1=4$. Then
$\sum_{i=2}^{10} b_i(b_i-1)=16-4\times 3=4$ implies that in $b_2,\cdots,b_{10}$, there are exactly two of them equaling $2$; the
rest are either $1$ or $0$. With $F_1\cdot B_2=0$, it follows easily that
$$
B_2=6H-4E_1-E_q-E_r-2E_{i_1}-2E_{i_2}-E_{i_3}-\cdots-E_{i_6}.
$$
With this understood, we note that 
$$
2(B_1+B_2)+F_1=9H-3E_1-3E_p-3E_q-3E_r-4E_{i_1}-4E_{i_2}-2E_{i_3}-\cdots-2E_{i_6}.
$$
This implies that without loss of generality, 
$$
F_2=E_{i_1}-E_{i_3}-E_{i_4}, \;\;\; F_3=E_{i_2}-E_{i_5}-E_{i_6}.
$$

With the preceding understood, let $A$ be the class of any of the $(-2)$-spheres $F_{4,1}$,
$F_{4,2}$. Then recall that because of the area condition we imposed at the beginning, the
$a$-coefficient of $A$ lies in the range $0\leq a\leq 4$, and its expression must be of the form
specified in Lemma 3.6. With this understood, if $a=4$ in $A$, then 
$$
A=4H-3E_{j_1}-E_{j_2}-\cdots-E_{j_{10}},
$$ 
containing all $10$ $E_i$-classes. It is easy to see that $A\cdot F_2\neq 0$, which rules out this possibility. If $a=3$ in $A$, then we can write $A=3H-2E_{j_1}-E_{j_2}-\cdots-E_{j_{8}}$. Then $B_1\cdot A=0$ implies that $E_{j_1}=E_1$ must be true, and $E_p$ is not contained in $A$. With this understood, $A\cdot F_2=A\cdot F_3=0$
implies that one of the $E_i$-classes in each pair $(E_{i_3},E_{i_4})$, $(E_{i_5},E_{i_6})$ can not appear in $A$. Together with $E_p$, there are $3$ $E_i$-classes not contained in $A$, which is a contradiction as there are only $10$ $E_i$-classes in total. 
If $a=2$ in $A$, then it is easy to see that $A\cdot B_1<0$. Hence we must have either
$a=1$ or $a=0$ in $A$. If $a=1$ in $A$, then $A\cdot B_1=0$ implies that $A$ contains both $E_1$ and $E_p$. But this leads to $A\cdot F_1<0$, which is a contradiction. This shows that $A=E_s-E_t$ for some $E_s,E_t$. It is easy to check that there are only $3$ possibilities:
$E_q-E_r$, $E_{i_3}-E_{i_4}$, and $E_{i_5}-E_{i_6}$. We just showed that the classes of 
$F_{4,1}$, $F_{4,2}$ must be from the three classes above. But they mutually intersect trivially with
each other, contradicting the fact that $F_{4,1}\cdot F_{4,2}=1$. Hence Case (1) is ruled out.

\vspace{1.5mm}

{\bf Case (2):} Suppose $a=-1$ in $B_1$. Then $B_1=-H+2E_1-E_x-E_y-E_z$ for some $E_x,E_y,E_z$. Again by Lemma 4.2(1), 
$a\geq 0$ in  $F_1,F_2,F_3$. If $a=3$ in $F_1$, then it is easy to see from $F_1\cdot B_1=0$, that $E_1$ must appear in $F_1$
with coefficient $-2$, and two of $E_x,E_y,E_z$ can not appear in $F_1$. But $F_1$ contains $9$ $E_i$-classes and there are
totally $10$ $E_i$-classes, which is a contradiction. If $a=2$ in $F_1$, then $F_1\cdot B_1=0$ implies that
$F_1=2H-E_1-E_{i_1}-\cdots-E_{i_6}$. But this gives a contradiction 
$$
\omega(F_1-B_1)=\omega(3H-3E_1-E_{i_1}-\cdots-E_{i_6})+\omega(E_x+E_y+E_z)>0,
$$
as the class $3H-3E_1-E_{i_1}-\cdots-E_{i_6}$ can be written as a sum of classes of the form
$H-E_i-E_j-E_k$, which all have nonnegative areas. Consequently, $a=1$ in $F_1$ and $a=0$ in $F_2,F_3$, where 
$$
F_1=H-E_1-E_x-E_u-E_v
$$ 
for some $E_u,E_v$. By the same argument as in Case (1), the sum of the $a$-coefficients of $B_1,B_2$ equals $4$, so that $a=5$ in $B_2$. 

Let $B_2=5H-\sum_{i=1}^{10} b_i E_i$. Then $c_1(K_X)\cdot B_2+B_2^2=2$ and $B_2^2=6$ imply that
$$
-15+\sum_{i=1}^{10} b_i+6=2,\;\; 25-\sum_{i=1}^{10} b_i^2=6.
$$
As we argued in Case (1), $B_2$ must have the following expression: 
$$
B_2=5H-3E_1-E_y-E_u-E_v-2E_{i_1}-E_{i_2}-E_{i_3}-E_{i_4}.
$$
After computing $2(B_1+B_2)+F_1$, we see that $E_y,E_{i_1}$ must be the $E_i$-classes in $F_2,F_3$ which has a $(+1)$-coefficient.
It follows then 
$$
F_2=E_y-E_z-E_{i_4},\;\;\; F_3=E_{i_1}-E_{i_2}-E_{i_3}
$$ 
without loss of generality.

With the preceding understood, let $A$ be the class of any of the $(-2)$-spheres $F_{4,1}$,
$F_{4,2}$. If $a=4$ in $A$, we have $A\cdot F_2\neq 0$ which is not allowed as in Case (1). 
If $a=3$ in $A$, then we can write $A=3H-2E_{j_1}-E_{j_2}-\cdots-E_{j_{8}}$. Then $B_1\cdot A=0$ implies that $E_{j_1}=E_1$ must be true, and exactly one of $E_x,E_y,E_z$ appears in $A$. With
$F_2\cdot A=0$, we see that $E_x$ is contained in $A$. But this leads to $A\cdot F_1=-2$, which is a contradiction. To proceed further,
we rule out $a=2$ in $A$ by a similar argument as in Case (1). Now suppose $a=1$ in $A$. Then $B_1\cdot A=0$ implies that 
$E_1$ and exactly one of $E_x,E_y,E_z$ appears in $A$. Then $A\cdot F_2=0$ implies $A$ must contain $E_x$. But we then get
$A\cdot F_1<0$ which is a contradiction. This leaves only two possibilities for $A$: $E_u-E_v$, $E_{i_2}-E_{i_3}$. But these two classes intersect trivially, contradicting $F_{4,1}\cdot F_{4,2}=1$. 
Hence Case (2) is also eliminated. 

\vspace{1.5mm}

{\bf Case (3):} Suppose $a=0$ in $B_1$. Then since $a\geq 4$ in $B_2$, we see immediately that the sum of the $a$-coefficients of $F_1,F_2,F_3$ is either $1$ or $-1$. In the former case, $a=4$ in $B_2$. If we write $B_2=4H-\sum_{i=1}^{10}b_i E_i$,
then $c_1(K_X)\cdot B_2+B_2^2=2$ and $B_2^2=6$ give
$$
B_2=4H-2E_{j_1}-E_{j_2}-\cdots-E_{j_7}.
$$
But $B_1$ takes the form of $B_1=E_{i_1}-E_{i_2}-\cdots-E_{i_6}$. The fact that there are totally $10$ $E_i$-classes implies easily that $B_1\cdot B_2<0$. In the latter case, $a=5$ in $B_2$. But then by Lemma 3.4(2), exactly one of $F_1,F_2,F_3$ has $a=-1$. Suppose it is $F_1$. Then
$F_1=-H+2E_1$. It is easy to see that $F_1\cdot B_2$ is always odd because the $a$-coefficient of
$B_2$ is $5$. This rules out Case (3). 

\vspace{1.5mm}

{\bf Case (4):} Suppose $a=1$ in $B_1$. Then $a=4$ in $B_2$ and $F_1=-H+2E_1$. But note that $B_1\cdot F_1$ is always odd, hence this is not possible. This rules out Case (4). 

\vspace{1.5mm}

{\bf Case (5):} Suppose $a>1$ in $B_1$. Then with $a\geq 4$ in $B_2$, the total contribution of
$B_1,B_2$ to the $a$-coefficient of $-3c_1(K_X)$ is at least $12$. But the $a$-coefficient of $-3c_1(K_X)$ is $9$, so $F_1,F_2,F_3$ must contribute $-3$ to $a$-coefficient of $-3c_1(K_X)$.
This is not possible by Lemmas 3.4(2) and 4.2(1). Hence Case (5) is eliminated. 

The above discussions show that there is no component $B_i$ with $B_i^2<0$. Hence the
proposition is proved. 

\end{proof}

\begin{proposition}
Let $F_1,F_2,\cdots,F_9$ be a disjoint union of symplectic $(-3)$-spheres in a rational 
$4$-manifold $X$, and let $\{B_i\}$ be a set of disjoint symplectic surfaces, possibly empty, which
lie in the complement of $F_1,F_2,\cdots,F_9$, such that  
$$
c_1(K_X)=-\frac{1}{3}(F_1+F_2+\cdots+F_9)-\frac{2}{3}\sum_i B_i.
$$
Then $\{B_i\}$ must be empty if each $B_i$ is a torus of self-intersection zero.
\end{proposition}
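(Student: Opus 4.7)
The plan is to show that if $\{B_i\}$ is non-empty, the resulting configuration must violate either an intersection relation or the positivity of symplectic area of some exceptional class in a reduced basis of $(X,\omega)$.

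First I would extract global numerical data. Adjunction for each torus gives $c_1(K_X)\cdot B_i=0$, and squaring the identity $c_1(K_X)=-\tfrac{1}{3}\sum F_k-\tfrac{2}{3}\sum B_i$ (with all pairwise cross-terms vanishing by disjointness) yields
$$c_1(K_X)^2=\tfrac{1}{9}\sum F_k^2+\tfrac{4}{9}\sum B_i^2=-3,$$
so $X=\C\P^2\#12\overline{\C\P^2}$. Fix a reduced basis $H,E_1,\ldots,E_{12}$ and write $B_i=c_i H-\sum_j d_{ij} E_j$. Since $B_i$ is a torus rather than a $(-\alpha)$-sphere with $\alpha>2$, Lemma 3.4 gives $c_i\geq 0$, and then Lemma 4.2(2) applied to $B_i$ (using $B_i^2=c_1(K_X)\cdot B_i=0$) forces $c_i\geq 3$. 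Reading the $a$-coefficient of $-3c_1(K_X)=\sum F_k+2\sum B_i$ yields $9=\sum a_k+2\sum c_i$; by Lemmas 3.4(1) and 4.2(1) at most one $a_k$ is negative (and then equal to $-1$), so $\sum a_k\geq -1$ bounds the number of tori by $1$. Thus it suffices to rule out the existence of a single torus $B$ with $c_1\in\{3,4,5\}$.

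Next I apply Lemma 4.1, using that each $(-3)$-sphere forms its own negative-definite plumbing graph, to rescale the symplectic form so that $\omega(F_k)<-c_1(K_X)\cdot[\omega]$ for every $k$. Then Lemma 3.6 forces each $F_k$ with $a_k\geq 0$ into the canonical form
$$F_k=a_k H-(a_k-1)E_{j_1^k}-E_{j_2^k}-\cdots-E_{j_{2a_k+3}^k},\qquad a_k\leq 4,$$
while Lemma 3.4 restricts any $a_k=-1$ class to $F_k=-H+2E_1-E_j$. The torus $B$ is rigid for each value of $c_1$: for instance $c_1=3$ forces $B=3H-\sum_{i\in S}E_i$ with $|S|=9$, and $c_1=4$ forces $B=4H-2E_{i_1}-2E_{i_2}-\sum_{i\in S'}E_i$ with $|S'|=8$.

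Focusing on the principal case $c_1=3$, a direct computation gives $F_k\cdot B=-3+2[1\in S]-[j\in S]\leq -1$ when $a_k=-1$, so all $a_k\in\{0,1,2,3,4\}$ with $\sum a_k=3$, yielding three possible distributions. In each, the constraints $F_k\cdot B=0$ and $F_k\cdot F_l=0$ pin down how each $F_k$'s index set intersects $S$ versus $S^c$ very tightly. The decisive tool is a cyclic/area obstruction: any three $(-3)$-sphere classes of shape $E_{a_i}-E_{b_i}-E_p$ ($i=1,2,3$) sharing a common exceptional $E_p$ and pairwise orthogonal must form a cyclic triple
$$E_x-E_y-E_p,\quad E_y-E_z-E_p,\quad E_z-E_x-E_p,$$
and summing the positivity requirements $\omega(F_i)>0$ then yields the impossible $0>3\omega(E_p)$, contradicting $E_p\in\E_\omega$. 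In each admissible distribution, a coefficient count of $E_p$ (for $p\in S^c$) in $\sum F_k=3H-\sum_{i\in S}E_i-3\sum_{p\in S^c}E_p$ either forces at least three $a_k=0$ components to share a common $E_p$ (triggering the cyclic obstruction), or else creates a direct intersection conflict between a Type 0 component $E_p-E_q-E_r$ (with $\{p,q,r\}=S^c$) and an $a_k\geq 1$ component using $p$ as its sole $S^c$-index. Parallel arguments dispose of $c_1=4,5$ using the analogous rigid forms of $B$. The main obstacle will be the combinatorial bookkeeping across sub-cases: for each $a$-coefficient distribution one must identify either the forced cyclic triple or the direct intersection conflict that produces the contradiction.
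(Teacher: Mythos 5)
Your global setup (forcing $X=\C\P^2\#12\overline{\C\P^2}$, $c_i\geq 3$ for each torus via Lemmas 3.4 and 4.2(2), and the $a$-coefficient count $9=\sum a_k+2\sum c_i$ reducing to a single torus) coincides with the paper's. Where you diverge is the combinatorial engine: the paper's key observation is that at most six pairwise-disjoint symplectic $(-3)$-spheres with $a=0$ can exist, and they must form three chained pairs on twelve distinct indices; you instead use the fact that three pairwise-orthogonal classes $E_{\ast}-E_{\ast}-E_p$ sharing a common negative index must be a cyclic triple summing to $-3E_p$, which violates area positivity. Both are correct consequences of the same orthogonality analysis, and your version neatly kills the distributions $(3,0^8)$ and $(2,1,0^7)$ by pigeonhole on the three indices of $S^c$, each of which must carry net coefficient $-3$ in $\sum_k F_k$. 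Your exclusion of $a_k=-1$ by pairing directly against $B=3H-\sum_{i\in S}E_i$ (giving $-3+2d_1\neq 0$) is actually cleaner than the paper's, and the same parity computation disposes of $c_1=5$ outright. (Minor slip: for $\alpha=3$ Lemma 3.4(2) gives $s=1$, so the negative class is $-H+2E_1$ with no trailing $-E_j$; the class $-H+2E_1-E_j$ has square $-4$. Your intersection count survives this.) The Lemma 4.1/3.6 rescaling is harmless but not needed once $\sum a_k\leq 3$ with at most one term equal to $-1$.

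Two places where the sketch as written does not yet close. First, in the main distribution $(1,1,1,0^6)$ the orthogonality $B\cdot(E_x-E_y-E_z)=0$ with $d_i\in\{0,1\}$ forces each $a=0$ class to be either entirely supported on $S^c$ or to have exactly one negative index in $S^c$; a short computation shows a class of the first kind is never orthogonal to another $a=0$ class meeting $S^c$, so your ``Type 0 versus $a_k\geq 1$'' conflict never actually occurs. The surviving sub-case is the one where each $p\in S^c$ lies in exactly one $a=1$ class and is the common negative index of exactly two chained $a=0$ classes $E_x-E_y-E_p,\ E_y-E_z-E_p$; here the cyclic obstruction is unavailable ($n_p^-=2$, not $3$), and the contradiction must come from intersecting the $a=1$ class $H-E_p-\cdots$ with that chained pair (forcing $x\in L$, $y\notin L$ from the first and then a negative intersection with the second). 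This is precisely the paper's endgame, and it needs to be stated. Second, $c_1=4$ is not ``parallel'': since $B=4H-2E_{i_1}-2E_{i_2}-\cdots$, the class $-H+2E_1$ \emph{can} be orthogonal to $B$ (take $E_1=E_{i_1}$), so the distributions $(-1,2,0^7)$ and $(-1,1,1,0^6)$ survive your parity test and must be handled separately, e.g.\ by noting that orthogonality of $-H+2E_1$ with every $a=0$ class bars $E_1$ from all of them and then rerunning the coefficient count on the two missing indices. The argument goes through, but it is genuine additional bookkeeping rather than a repetition of the $c_1=3$ case.
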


\begin{proof}
We shall prove by contradiction. Suppose $\{B_i\}\neq \emptyset$, where each $B_i$ is
a torus with $B_i^2=0$. We first note that $c_1(K_X)^2=-3+\frac{4}{9}\sum_i B_i^2=-3$,
so that $X=\C\P^2 \# 12\overline{\C\P^2}$. Again, by analyzing the contributions of $B_i$ 
to the $a$-coefficient of $-c_1(K_X)$, it follows easily that there is only one component in 
$\{B_i\}$, and moreover, the sum of the $a$-coefficients of $F_1,\cdots,F_9$ can be at most $3$. 

With the preceding understood, the following is the key observation:  

\vspace{1.5mm}

{\it The maximal number of disjoint symplectic $(-3)$-spheres in $\C\P^2 \# 12\overline{\C\P^2}$
with $a$-coefficient equaling $0$ is six, and moreover, such six $(-3)$-spheres must be of the form:
\begin{itemize}
\item $E_{i_1}-E_{i_2}-E_{i_3}$, $E_{i_2}-E_{i_3}-E_{i_4}$,
\item $E_{j_1}-E_{j_2}-E_{j_3}$, $E_{j_2}-E_{j_3}-E_{j_4}$,
\item $E_{k_1}-E_{k_2}-E_{k_3}$, $E_{k_2}-E_{k_3}-E_{k_4}$,
\end{itemize}
where $i_1,i_2,i_3,i_4,j_1,j_2,j_3,j_4,k_1,k_2,k_3,k_4$ are distinct indices.}

\vspace{1.5mm}

To see this,  let $A=E_i-E_j-E_k$, $A^\prime=E_r-E_s-E_t$ be two distinct symplectic $(-3)$-spheres such that $A\cdot A^\prime=0$. Then it is easy to see that if $E_r$ is not contained in $A$ and $E_i$ not
in $A^\prime$, the indices $i,j,k,r,s,t$ must be distinct. On the other hand, without loss of generality,
assume that $E_r$ appears in $A$, say $r=j$,  then $k=s$ or $t$ must be true. The above claim follows easily from the fact that we only have these two alternatives. 

With the preceding understood, note that by Lemma 3.4(2), $a\geq -1$ in each $F_k$. Moreover, if $a=-1$, the class must be $-H+2E_1$, and there is at most one such $(-3)$-sphere in $F_1,\cdots,F_9$ by Lemma 4.2(1). 

We claim that the class $A=-H+2E_1$ can not be represented by any of the $(-3)$-spheres $F_k$.
To see this, note that if $A^\prime$ is the class of one of $F_k$ which has positive $a$-coefficient,
then $A\cdot A^\prime\neq 0$ unless the $a$-coefficient of $A^\prime$ is an even number. Now
with the fact that the sum of the $a$-coefficients of $F_1,\cdots,F_9$ can be at most $3$, it follows 
easily that at least six of the nine $(-3)$-spheres $F_1,\cdots,F_9$ have zero $a$-coefficient.
But this is a contradiction because it is easy to see that $A=-H+2E_1$ intersects nontrivially with
one of the six $(-3)$-spheres. Hence the claim that the class $A=-H+2E_1$ can not occur.
It follows easily that six of the nine $(-3)$-spheres $F_1,\cdots,F_9$ have zero $a$-coefficient, and three of them have $a$-coefficient equaling $1$. Moreover, note that the $a$-coefficient of $B$
must be $3$.

To proceed further, we denote the single component of $\{B_i\}$ by $B$. 
Note that as $B$ is disjoint from the six $(-3)$-spheres with
zero $a$-coefficient, it must be the class:
$$
B=3H-E_{i_1}-E_{i_2}-E_{i_4}-E_{j_1}-E_{j_2}-E_{j_4}-E_{k_1}-E_{k_2}-E_{k_4}.
$$
In other words, the three $E_i$-classes which are missing from $B$ are $E_{i_3},E_{j_3},E_{k_3}$.
With this understood, let $A=H-E_{l_1}-E_{l_2}-E_{l_3}-E_{l_4}$ be any of the three $(-3)$-spheres 
whose $a$-coefficient equals $1$. Then $A\cdot B=0$ implies that exactly three of the four $E_i$-classes $E_{l_1}, E_{l_2}, E_{l_3}, E_{l_4}$ must appear in $B$. Without loss of generality, let
$E_{l_4}$ be the one not contained in $B$, and without loss of generality, assume 
$E_{l_4}=E_{i_3}$. Then since $A$ intersects trivially with the $(-3)$-sphere 
$E_{i_2}-E_{i_3}-E_{i_4}$, it is easy to see that $A$ must also contain the class $E_{i_2}$.
Now with both $E_{i_2},E_{i_3}$ contained in $A$, the intersection of $A$ with the $(-3)$-sphere 
$E_{i_1}-E_{i_2}-E_{i_3}$ must be negative. This is a contradiction, hence the proposition is
proved.

\end{proof}

\begin{proposition}
Let $F_{j,1}$, $F_{j,2}$, where $1\leq j\leq 5$, be a disjoint union of five pairs of symplectic 
$(-3)$-sphere and $(-2)$-sphere intersecting transversely and positively at one point in a 
rational $4$-manifold $X$, and let $\{B_i\}$ be a set of disjoint symplectic surfaces, possibly 
empty, lying in the complement of $F_{j,1}$, $F_{j,2}$, such that 
$$
c_1(K_X)=-\sum_{j=1}^5(\frac{2}{5}F_{j,1}+\frac{1}{5}F_{j,2})
-\frac{4}{5}\sum_i B_i.
$$
Then $\{B_i\}$ must be empty if each $B_i$ is a torus of self-intersection zero.
\end{proposition}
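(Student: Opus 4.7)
The proof will follow the template of Propositions 4.3 and 4.4: use Lemma 4.1 to impose favorable area conditions, apply Lemma 3.6 to rigidly constrain the $a$-coefficients of the $F_{j,k}$, and then run a case analysis to rule out a nonempty $\{B_i\}$. I begin with preliminary computations: squaring the given expression for $c_1(K_X)$ and using the disjointness (each pair self-contributes $(\tfrac{2}{5})^2(-3)+2\cdot\tfrac{2}{5}\cdot\tfrac{1}{5}\cdot 1+(\tfrac{1}{5})^2(-2)=-\tfrac{2}{5}$, while the torus terms drop out because $B_i^2=0$) gives $c_1(K_X)^2=-2$, so $X=\C\P^2\#11\overline{\C\P^2}$ and $N=11$. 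Adjunction on each $F_{j,k}$ and $B_i$ is consistent, and in particular $c_1(K_X)\cdot B_i=0$. By Lemma 4.1, I may then rescale so that $\omega(F_{j,k})<-c_1(K_X)\cdot[\omega]$ for every $j,k$; this is possible because $c_1(K_X)\cdot B_i=0$, so shrinking the $F_{j,k}$-areas keeps the right-hand side positive. Lemma 3.6 forces every $F_{j,k}$ into the special form $aH-(a-1)E_{j_1}-E_{j_2}-\cdots-E_{j_{2a+\alpha}}$ with $a\leq(11-\alpha)/2\leq 4$, and Lemma 3.4 gives $a(F_{j,2})\geq 0$ and $a(F_{j,1})\geq-1$ (with equality only for $-H+2E_1$).

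Suppose for contradiction that $\{B_i\}\neq\emptyset$. Each $B_i$ is a torus with $B_i^2=c_1(K_X)\cdot B_i=0$, and Lemma 3.3 rules out $a(B_i)<0$, so by Lemma 4.2(2) we have $a(B_i)\geq 3$. Comparing $a$-coefficients on both sides of the canonical-class formula yields
$$15=\sum_{j=1}^5\bigl(2a(F_{j,1})+a(F_{j,2})\bigr)+4\sum_i a(B_i).$$
By Lemma 4.2(1), at most one $F_{j,k}$ has negative $a$-coefficient, and such a class must be $-H+2E_1$ (with $a=-1$), so the first sum on the right is $\geq -2$. This forces $|\{B_i\}|=1$, and the single component $B$ satisfies $a(B)\in\{3,4\}$.

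The remainder is case analysis on $a(B)$. When $a(B)=3$, Lemma 4.2(2) gives $B=3H-E_{i_1}-\cdots-E_{i_9}$ (missing exactly two $E$-classes), and the constraint becomes $\sum_j(2a(F_{j,1})+a(F_{j,2}))=3$. When $a(B)=4$, the equations $\sum b_i=12$, $\sum b_i^2=16$ force $B=4H-2E_{p_1}-2E_{p_2}-E_{q_1}-\cdots-E_{q_8}$, and the constraint becomes $\sum_j(2a(F_{j,1})+a(F_{j,2}))=-1$ (so exactly one $F_{j,1}$ is $-H+2E_1$). In each case one imposes $B\cdot F_{j,k}=0$, which dramatically restricts the admissible classes for each $F_{j,k}$ of a given $a$-coefficient (for instance, when $a(B)=3$ the $(-3)$-sphere value $a=-1$ is excluded outright, and $a=4$ is impossible for every $F_{j,k}$); then the pair constraint $F_{j,1}\cdot F_{j,2}=1$, the pairwise disjointness of distinct pairs, and the $a$-sum equation should rule out every consistent assignment. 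I expect the main obstacle to be exactly this combinatorial bookkeeping: each $F_{j,k}$ admits up to five $a$-values, the pair constraint couples the two spheres within a pair, and the chains of intersection conditions across five pairs must all be tracked simultaneously. The anticipated shape of the argument is that the low-$a$ contributions ($a\leq 1$) force most $F_{j,k}$'s onto at most four $E$-classes, so that the two missing slots of $B$ (or the single missing slot when $a(B)=4$) become over-subscribed, producing either a negative intersection with $B$ or a forbidden nontrivial intersection between two components of different pairs.
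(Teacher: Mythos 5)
Your preliminary setup is correct and agrees with the paper's: $c_1(K_X)^2=-2$ so $X=\C\P^2\#11\overline{\C\P^2}$, a single torus $B$ with $a(B)\in\{3,4\}$, and the $a$-coefficient budget $15=\sum_j(2a(F_{j,1})+a(F_{j,2}))+4a(B)$, which (together with Lemma 3.4 and Lemma 4.2(1)) pins the $a$-coefficients of all ten spheres: in the $a(B)=4$ case one $(-3)$-sphere is $-H+2E_1$ and the other four have $a=0$; in the $a(B)=3$ case the class $-H+2E_1$ is excluded because it meets $B$ nontrivially, so exactly one $(-3)$-sphere has $a=1$ and four have $a=0$. (Your detour through Lemma 4.1 and Lemma 3.6 is harmless but unnecessary here — unlike in Proposition 4.3, the budget already forces everything, which is why the paper invokes neither lemma in this proof.)

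The genuine gap is that the decisive step is never executed: you write that the intersection conditions ``should rule out every consistent assignment'' and describe only the ``anticipated shape'' of the argument. That combinatorial elimination is the entire content of the proposition, and it is not a routine check. The paper carries it out by first classifying the four mutually disjoint $a=0$ $(-3)$-spheres into exactly two configurations — two interlocking pairs $E_{i_1}-E_{i_2}-E_{i_3}$, $E_{i_2}-E_{i_3}-E_{i_4}$, $E_{i_5}-E_{i_6}-E_{i_7}$, $E_{i_6}-E_{i_7}-E_{i_8}$, or one such pair plus two disjoint singletons — and then, in each configuration and each case for $a(B)$, counting how many $E_i$-classes are forced \emph{not} to appear in $B$ (using $B\cdot F_{j,1}=0$ and, when $a(B)=4$, $B\cdot(-H+2E_1)=0$) against the fact that $B$ omits only two (resp.\ one) of the eleven $E_i$-classes; the residual subcase in the $a(B)=3$ setting is killed by intersecting the $a=1$ sphere $H-E_{l_1}-E_{l_2}-E_{l_3}-E_{l_4}$ with $B$ and with the interlocking pair. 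Your guess at the mechanism (over-subscription of the missing slots of $B$) is in the right direction, but until the configurations are enumerated and each one is actually contradicted, the proof is a plan rather than an argument.
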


\begin{proof}
We prove by contradiction. Suppose to the contrary that $\{B_i\}$ is nonempty, with each
$B_i$ being a torus of self-intersection zero. Then again, there can be only one component in
$\{B_i\}$. We call it $B$. Moreover, the $a$-coefficient of $B$ is either $4$ or $3$.

Before we proceed further, note that $c_1(K_X)^2=-2$, so that $X=\C\P^2 \# 11\overline{\C\P^2}$.
In particular, there are only $11$ $E_i$-classes in $X$. 

\vspace{1.5mm}

{\bf Case (1):} Suppose $a=4$ in $B$. Then if we write $B=4H-\sum_{i=1}^{11} b_i E_i$,
the $b_i$'s satisfy the following equation: $4(4-3)=\sum_{i=1}^{11} b_i(b_i-1)$ (see the proof
of Lemma 4.2). It follows easily that 
$$
B=4H-2E_{j_1}-2E_{j_2}-E_{j_3}-\cdots-E_{j_{10}}.
$$
With this understood, note that since the contribution of $B$ to the $a$-coefficient of 
$-5c_1(K_X)$ is $16>15$, it follows easily that there must be a $(-3)$-sphere $F_{j,1}$ having $a=-1$, 
with the remaining four $(-3)$-spheres having $a=0$. By Lemma 3.4(2), the class of the $(-3)$-sphere with $a=-1$ 
must be $-H+2E_1$, and since its intersection 
with $B$ is zero, either $E_1=E_{j_1}$ or  $E_1=E_{j_2}$ must be true. Without loss of
generality, assume $E_{j_1}=E_1$. Then it is clear that none of the four $(-3)$-spheres 
with $a=0$ can contain the class $E_1=E_{j_1}$. 

With the preceding understood, it is easy to see that the expressions of the four $(-3)$-spheres 
with $a=0$ fall into the following two possibilities without loss of generality:
\begin{itemize}
\item [{(!)}] $E_{i_1}-E_{i_2}-E_{i_3}$, $E_{i_2}-E_{i_3}-E_{i_4}$, $E_{i_5}-E_{i_6}-E_{i_7}$, 
$E_{i_6}-E_{i_7}-E_{i_8}$,
\item [{(!!)}] $E_{i_1}-E_{i_2}-E_{i_3}$, $E_{i_2}-E_{i_3}-E_{i_4}$, $E_{i_5}-E_{i_6}-E_{i_7}$, 
$E_{i_8}-E_{i_9}-E_{i_{10}}$.
\end{itemize}
Suppose we are in case (!). Consider the pair of $(-3)$-spheres $E_{i_1}-E_{i_2}-E_{i_3}$ and
$E_{i_2}-E_{i_3}-E_{i_4}$. If the class $E_{i_1}$ is not contained in the expression of $B$,
then it is easy to see that none of the four classes $E_{i_1}, E_{i_2}, E_{i_3},E_{i_4}$ are 
contained in $B$. But this contradicts the fact that there are only $11$ $E_i$-classes in total.
Hence $E_{i_1}$ must be contained in $B$. We know that $E_{i_1}\neq E_{j_1}$. If
$E_{i_1}=E_{j_2}$, then both $E_{i_2},E_{i_3}$ are contained in $B$, and it follows that 
$E_{i_4}$ does not show up in the expression of $B$. On the other hand, if $E_{i_1}=E_{j_s}$
for some $s>2$, then it is easy to see that $E_{i_3}$ can not show up in $B$. In any event,
one of $E_{i_3}$, $E_{i_4}$ does not appear in the expression of $B$. With this understood, 
the same argument shows that one of $E_{i_7}$, $E_{i_8}$ also does not appear in the 
expression of $B$. But this clearly contradicts the fact that there are totally only $11$ 
$E_i$-classes, hence case (!) is not possible. The argument for case (!!) is similar. First, note
that one of $E_{i_3}$, $E_{i_4}$ does not appear in $B$ as we have argued in case (!). 
Secondly, consider the pair of $(-3)$-spheres $E_{i_5}-E_{i_6}-E_{i_7}$ and
$E_{i_8}-E_{i_9}-E_{i_{10}}$. We observe that one of the classes $E_{i_5},E_{i_8}$ 
is not equal to $E_{j_2}$.
Without loss of generality, assume $E_{i_5}\neq E_{j_2}$. Then one of $E_{i_6}, E_{i_7}$
can not be contained in $B$. So totally there are at least $2$ $E_i$-classes not
contained in $B$, which contradicts the fact that there are only $11$ $E_i$-classes.
Hence case (!!) is also not possible. This rules out Case (1).

\vspace{1.5mm}

{\bf Case (2):} Suppose $a=3$ in $B$. Then by Lemma 4.2(2), 
$B=3H-E_{j_1}-\cdots-E_{j_{9}}$.
With this understood, we first observe that the class $-H+2E_1$ intersects nontrivially with $B$,
so none of the five $(-3)$-spheres can have $a<0$. On the other hand, from the proof of Proposition 4.4, it is
easy to see that the five $(-3)$-spheres can not all have $a=0$. Now observe that the contribution of $B$ to the 
$a$-coefficient of $-5c_1(K_X)$ is $12$. It follows easily that exactly one of the five $(-3)$-spheres 
has $a=1$, and the other four all have $a=0$. The possible expressions of the four $(-3)$-spheres with $a=0$ 
are given in either (!) or (!!) listed in Case (1). In the second case (!!), it is easy to see that 
there are three $E_i$-classes in the four $(-3)$-spheres with $a=0$ which do not show up in $B$. 
This contradicts the fact that there are only $11$ $E_i$-classes, hence (!!) is not possible. In case (!), 
it is easy to see that $E_{i_3}$, $E_{i_7}$ are precisely the two $E_i$-classes that are not in the expression of $B$.
To derive a contradiction, we consider the $(-3)$-sphere with $a=1$. We write its class as
$A=H-E_{l_1}-E_{l_2}-E_{l_3}-E_{l_4}$. Then we note that one of $E_{i_1}$ and $E_{i_5}$, 
say $E_{i_1}$, must appear in the above expression. It follows that $E_{i_1}, E_{i_2}, E_{i_4}$ must all appear in $A$, but not $E_{i_3}$. Without loss of generality, assume 
$\{E_{i_1}, E_{i_2}, E_{i_4}\}=\{E_{l_1}, E_{l_2}, E_{l_3}\}$. Then $A\cdot B=0$ implies easily
that $E_{l_4}$ can not show up in $B$. It follows that $E_{l_4}=E_{i_7}$ must be true. But this 
implies that $A$ has nonzero intersection with the $(-3)$-sphere $E_{i_5}-E_{i_6}-E_{i_7}$, 
which is a contradiction. Hence (!) is also not possible. This rules out Case (2) as well, and 
the proof of the proposition is complete. 

\end{proof}

\section{The proof of main theorems}

We begin with the key technical lemma, which classifies the possible homological expressions of
a disjoint union of $8$ symplectic $(-2)$-spheres in $\C\P^2\# 9\overline{\C\P^2}$ under a very
delicately chosen assumption on the symplectic structure. 

\begin{lemma}
Let $F_1,F_2,\cdots,F_8$ be a disjoint union of $8$ symplectic $(-2)$-spheres in 
$X=\C\P^2\# 9\overline{\C\P^2}$. Suppose the symplectic structure $\omega$ obeys the
following constraints:
\begin{itemize}
\item one of $F_k$ has $\omega$-area $\delta_1$, the remaining seven have $\omega$-area $\delta_2$;
\item $\delta_2<\delta_1<2\delta_2$, and $7\delta_i< -c_1(K_X)\cdot [\omega]$ for $i=1,2$.
\end{itemize}
Then for any given reduced basis $H, E_1,E_2,\cdots, E_9$ of $(X,\omega)$, there are three possibilities for the classes of $F_1,F_2,\cdots,F_8$:
\begin{itemize}
\item [{(a)}] $F_1=3H-2E_{i_1}-E_{i_2}-\cdots-E_{i_7}-E_{i_8}$, and 
$F_2=H-E_{i_2}-E_{i_3}-E_{i_4}$, $F_3=H-E_{i_2}-E_{i_5}-E_{i_6}$, 
$F_4=H-E_{i_2}-E_{i_7}-E_{i_8}$, $F_5=H-E_{i_3}-E_{i_5}-E_{i_7}$, 
$F_6=H-E_{i_3}-E_{i_6}-E_{i_8}$, $F_7=H-E_{i_4}-E_{i_5}-E_{i_8}$, 
$F_8=H-E_{i_4}-E_{i_6}-E_{i_7}$.
\item [{(b)}] $F_1=H-E_{l_1}-E_{l_2}-E_{l_3}$, $F_2=H-E_{l_1}-E_{l_4}-E_{l_5}$, 
$F_3=H-E_{l_1}-E_{l_6}-E_{l_7}$, $F_4=H-E_{l_2}-E_{l_4}-E_{l_6}$, 
$F_5=H-E_{l_3}-E_{l_5}-E_{l_6}$, $F_6=H-E_{l_2}-E_{l_5}-E_{l_7}$, 
$F_7=H-E_{l_3}-E_{l_4}-E_{l_7}$, and $F_8=E_{l_8}-E_{l_9}$.
\item [{(c)}] $F_1=H-E_{l_1}-E_{l_2}-E_{l_3}$, $F_2=H-E_{l_1}-E_{l_4}-E_{l_5}$, 
$F_3=H-E_{l_1}-E_{l_6}-E_{l_7}$, $F_4=H-E_{l_1}-E_{l_8}-E_{l_9}$, 
$F_5=E_{l_2}-E_{l_3}$, $F_6=E_{l_4}-E_{l_5}$, $F_7=E_{l_6}-E_{l_7}$, $F_8=E_{l_8}-E_{l_9}$. 
\end{itemize}
\end{lemma}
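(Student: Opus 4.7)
The plan is to combine Lemma~3.6 with the combinatorics of pairwise disjointness and the reduced-basis area constraints. Since $\omega(F_k)\in\{\delta_1,\delta_2\}<-c_1(K_X)\cdot[\omega]/7$, Lemma~3.6 (applied with $N=9$, $\alpha=2$) gives $a_k\le(N-\alpha)/2=7/2$, hence $a_k\in\{0,1,2,3\}$, and each $F_k$ takes one of the canonical forms
\[
E_p-E_q,\quad H-E_p-E_q-E_r,\quad 2H-E_{j_1}-\cdots-E_{j_6},\quad 3H-2E_{j_1}-E_{j_2}-\cdots-E_{j_8}.
\]
Writing $n_a$ for the count of each type, the disjointness $F_k\cdot F_l=0$ becomes explicit incidence conditions on the indices via $A\cdot A'=aa'-\sum b_i b'_i$: two $a=1$ triples must share exactly one index; two $a=0$ classes must have disjoint supports; an $a=0$ pair $\{p,q\}$ and an $a=1$ triple $T$ are compatible iff $|T\cap\{p,q\}|\in\{0,2\}$. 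Summing $F_k\cdot F_k=-2$ and $c_1(K_X)\cdot F_k=0$ gives the bookkeeping identities $\sum_i N_i=3T_a$ and $\sum_i N_i^2=T_a^2+16$, where $T_a=\sum_k a_k$ and $N_i=\sum_k b_i^{(k)}$.

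The central reduction is to show $n_2=0$ and $n_3\le 1$. A case-check on the canonical forms shows that two $a=3$ classes satisfy $F\cdot F'\in\{-2,-1,0,1,2\}$, and disjointness $F\cdot F'=0$ occurs only in the borderline case where the doubled indices $s,t$ and the missing indices $a,b$ of the two classes are pairwise distinct. In that borderline case, the identity
\[
\omega(F_1+F_2)=-2c_1(K_X)\cdot[\omega]+(e_a+e_b)-(e_s+e_t),
\]
combined with $\omega(F_1+F_2)\le\delta_1+\delta_2<3\delta_2$ and $-c_1(K_X)\cdot[\omega]>7\delta_2$, forces $e_s+e_t>e_a+e_b+11\delta_2$; the individual area equations $\omega(F_i)=-c_1(K_X)\cdot[\omega]-(e_{s_i}-e_{a_i})\in\{\delta_1,\delta_2\}$ then push the reduced-basis entries to a configuration that, when propagated through the remaining six $(-2)$-spheres, contradicts the positivity $\omega(H-E_i-E_j)>0$ for appropriate $i,j$. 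The elimination of $n_2\ge 1$ proceeds analogously: the identity $F_k+c_1(K_X)=-H+E_\nu+E_\mu+E_\rho$, where $\nu,\mu,\rho$ index the $E$-classes missing from an $a=2$ representative, yields $\omega(H-E_\nu-E_\mu-E_\rho)>6\delta_2$, and coupling this with the restricted incidence patterns for spheres disjoint from an $a=2$ class and with $\delta_1<2\delta_2$ produces a contradiction with the reduced-basis ordering $\omega(E_1)\ge\cdots\ge\omega(E_9)$.

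With $n_2=0$ and $n_3\le 1$, the classification is a combinatorial enumeration. If $n_3=1$, write $F_1=3H-2E_{i_1}-E_{i_2}-\cdots-E_{i_8}$; disjointness from $F_1$ restricts $a=0$ classes to pairs in $\{i_2,\ldots,i_8\}$, and $a=1$ triples disjoint from $F_1$ are either subsets of $\{i_2,\ldots,i_8\}$ or of the form $\{i_1,i_9,\ast\}$. A direct count shows that any nontrivial $a=0$ sphere or an exceptional $a=1$ triple bounds the number of further pairwise-disjoint $(-2)$-spheres below seven, so $n_0=0$ and $n_1=7$; the seven triples in $\{i_2,\ldots,i_8\}$ pairwise sharing one index must form the Fano plane, yielding case~(a). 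If $n_3=0$, then $n_0+n_1=8$; the maximum size of a family of 3-subsets of $\{1,\ldots,9\}$ pairwise meeting in exactly one index is $7$ (realized only by a Fano plane on seven of the nine indices), so $n_0\ge 1$. The parity identity $3=|T|=\sum_i|T\cap P_i|+|T\cap\mathrm{uncovered}|$ together with $|T\cap P_i|\in\{0,2\}$ eliminates $n_0\in\{2,3\}$, and the two surviving distributions $(n_0,n_1)\in\{(1,7),(4,4)\}$ specialize to case~(b) (a Fano plane on $\{l_1,\ldots,l_7\}$ together with the pair $E_{l_8}-E_{l_9}$ on the two indices unused by the Fano plane) and case~(c) (four triples sharing a common index $l_1$ together with four $a=0$ pairs partitioning $\{l_2,\ldots,l_9\}$).

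The principal obstacle is the area-based exclusion of $n_2\ge 1$ and of the borderline $n_3\ge 2$ case: Lemma~3.6 alone admits both the $a=2$ and the disjoint-pair-of-$a=3$ configurations at the combinatorial level, and one must carefully couple the strict inequalities $\delta_2<\delta_1<2\delta_2$ and $7\delta_i<-c_1(K_X)\cdot[\omega]$ with the reduced-basis ordering of the $\omega(E_i)$'s and with the positivity of $\omega$ on the $(-1)$-classes $H-E_i-E_j$ to force the requisite contradictions. A secondary difficulty is the parity/incidence argument excluding $n_0\in\{2,3\}$ in the $n_3=0$ branch, where one must verify that no family of 3-subsets of a 9-element set compatible with two or three disjoint pairs attains the required count.
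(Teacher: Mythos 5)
Your skeleton agrees with the paper's: Lemma~3.6 bounds the $a$-coefficients by $3$ and pins down the canonical forms, the pairwise-orthogonality conditions are translated correctly into incidence conditions, and the terminal combinatorics (Fano plane for case~(a) and~(b), pencil plus four pairs for case~(c), the de~Bruijn--Erd\H{o}s-type bound forcing $n_0\geq 1$) is sound. The gap is that the eliminations you describe as the ``central reduction'' --- and, worse, one step you claim is purely combinatorial --- are exactly where the real work of the lemma lies, and your sketch does not contain the argument. Concretely, in the $n_3=1$ branch it is \emph{false} that ``a direct count'' excludes the extra configurations: with $F_1=3H-2E_{i_1}-E_{i_2}-\cdots-E_{i_8}$, the family $H-E_{i_1}-E_{i_9}-E_{i_8}$, $H-E_{i_2}-E_{i_3}-E_{i_8}$, $H-E_{i_4}-E_{i_5}-E_{i_8}$, $H-E_{i_6}-E_{i_7}-E_{i_8}$, $E_{i_2}-E_{i_3}$, $E_{i_4}-E_{i_5}$, $E_{i_6}-E_{i_7}$ consists of seven classes of square $-2$ and $K$-degree $0$, pairwise orthogonal and orthogonal to $F_1$ --- eight classes in total, so no counting argument can rule it out. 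The same is true for the configurations containing an $a=2$ class. These survive all homological tests and are killed only by the area hypotheses.

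The device the paper uses for every such elimination, and which your proposal never articulates, is to exhibit a suitable integral combination --- e.g.\ $F_k-A-A'$, $F_k+F_l+c_1(K_X)$, or $F_k+\sum_iA_i+c_1(K_X)$ --- as a sum of classes of the form $H-E_i-E_j-E_k$, each of nonnegative $\omega$-area in a reduced basis, and then contradict $\delta_1<2\delta_2$ or $7\delta_i<-c_1(K_X)\cdot[\omega]$. (For instance, for two disjoint $a=3$ classes, $F_1+F_2+c_1(K_X)=3H-2E_s-2E_t-\sum_{i\neq s,t,a,b}E_i$ is such a sum, so $\omega(F_1+F_2)\geq -c_1(K_X)\cdot[\omega]>7\delta_2$, against $\omega(F_1+F_2)\leq\delta_1+\delta_2<3\delta_2$; for the configuration displayed above, $\omega(F_1-(E_{i_2}-E_{i_3})-(H-E_{i_6}-E_{i_7}-E_{i_8}))\geq 0$ gives $\delta_1\geq 2\delta_2$.) Your two displayed identities are consistent with this but stop short of a contradiction: the inequality $e_s+e_t>e_a+e_b+11\delta_2$ is not by itself incompatible with the reduced-basis inequalities $\omega(H-E_i-E_j)>0$, and the phrase ``propagated through the remaining six $(-2)$-spheres'' is precisely where the proof would have to be. Until the $n_2\geq 1$ case, the two-$a=3$ case, and the type-$(\alpha)$/$a=0$ configurations in the $n_3=1$ branch are each given an explicit area contradiction of this kind, the classification into cases (a), (b), (c) is not established.
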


\begin{proof}

By Lemma 3.6, $a\leq 3$ in each $F_k$. 

\vspace{1.5mm}

{\bf Case (1):} Suppose there is a $F_k$ whose $a$-coefficient equals $3$. 
We may assume without loss of generality that it is $F_1$, and write 
$$
F_1=3H-2E_{i_1}-E_{i_2}-\cdots-E_{i_7}-E_{i_8}.
$$
Furthermore, we denote by $E_{i_9}$ the unique $E_i$-class that is missing in $F_1$. 

Let $A$ be the class of any of the remaining $(-2)$-spheres, i.e., $F_2,F_3,\cdots,F_8$.
Our first observation is that $a\neq 3$ in $A$. To see this, we note that if the $a$-coefficient
of $A$ equals $3$, then $A\cdot F_1=0$ implies that $A$ must take the following form without
loss of generality:
$$
A=3H-E_{i_1}-2E_{i_2}-\cdots-E_{i_7}-E_{i_9}.
$$
With this understood, we observe that 
$$
F_1+A+c_1(K_X)=3H-2E_{i_1}-2E_{i_2}-E_{i_3}-E_{i_4}-\cdots-E_{i_7}, 
$$
which can be written as a sum of three terms of the form $H-E_i-E_j-E_k$. It follows that
$\omega(A+F_1)\geq -c_1(K_X)\cdot [\omega]$, which is a contradiction. Hence the claim.

To proceed further, we first examine the classes $A$ whose $a$-coefficient equals $1$. 
Note that if $A$ is a class with $a=1$, then $A\cdot F_1=0$ implies that if $E_{i_1}$ appears in $A$, then so does $E_{i_9}$.
This allows us to divide the classes $A$ with $a=1$ into two types:
$$
\mbox{($\alpha$)} \; A=H-E_{i_1}-E_{i_9}-E_x,\;\;\;\;\;  \mbox{($\beta$)} \; A=H-E_r-E_s-E_x,
$$
where $E_x,E_r,E_s\in\{E_{i_2},E_{i_3},\cdots,E_{i_8}\}$.

\vspace{1.5mm}

{\bf Claim:} {\it There are no classes $A$ with $a=2$.}

\vspace{1.5mm}

{\bf Proof of Claim:} We first observe that if $A$ is a class with $a=2$, then $E_{i_1}$ is not contained
in $A$. This is because if $E_{i_1}$ is contained in $A$, then $A\cdot F_1=0$ implies that $E_{i_9}$
must also be contained in $A$, and $A$ takes the following form
$$
A=2H-E_{i_1}-E_{i_9}-E_{k_1}-E_{k_2}-E_{k_3}-E_{k_4}.
$$
But this would lead to a contradiction
$$
\omega(F_1+A)+c_1(K_X)\cdot [\omega]
=\omega(2H-2E_{i_1}-E_{k_1}-E_{k_2}-E_{k_3}-E_{k_4})\geq 0,
$$
as $2H-2E_{i_1}-E_{k_1}-E_{k_2}-E_{k_3}-E_{k_4}$ is a sum of terms of the form 
$H-E_i-E_j-E_k$.

With the preceding understood, suppose to the contrary that there is a class $A$ with $a=2$.
Then without loss of generality, we may write it as 
$$
A_1=2H-E_{i_2}-E_{i_3}-E_{i_4}-E_{i_5}-E_{i_6}-E_{i_7}.
$$
Moreover, if $A$ is another class of $F_2,F_3,\cdots,F_8$ with $a=2$, then it is easy to
check that $A_1\cdot A<0$. Hence $A_1$ is the only one with $a=2$.

Next we examine the possible classes of $A$ with $a=1$, which intersects trivially with $F_1$ and $A_1$.
It is easy to see that if $A$ is a class with $a=1$ and $A\cdot A_1=0$, then $A$ can not be
of type ($\alpha$), and for a type ($\beta$) class, $A$ must contain $E_{i_8}$. It is easy to see
that maximally, there are three such type ($\beta$) classes that are mutually disjoint, i.e., 
$$
A_2=H-E_{i_2}-E_{i_3}-E_{i_8}, A_3=H-E_{i_4}-E_{i_5}-E_{i_8}, A_4=H-E_{i_6}-E_{i_7}-E_{i_8}
$$
without loss of generality. The remaining three classes of $A$ must all have $a$-coefficient equaling $0$, and it
is easy to see that, without loss of generality, they are
$$
A_5=E_{i_2}-E_{i_3}, \; A_6=E_{i_4}-E_{i_5}, \; A_7=E_{i_6}-E_{i_7}.
$$
To derive a contradiction, we appeal to the area constraints. First, we observe that the area of $F_1$ must be
greater than the area of any of $A_5,A_6,A_7$. For example, 
$$
\omega(F_1-A_5)=\omega(3H-2E_{i_1}-2E_{i_2}-E_{i_4}-E_{i_5}-\cdots-E_{i_8})\geq 0
$$
as $3H-2E_{i_1}-2E_{i_2}-E_{i_4}-E_{i_5}-\cdots-E_{i_8}$ is a sum of terms of the form 
$H-E_i-E_j-E_k$. 
Furthermore, note that if $\omega(F_1-A_5)=0$, then $\omega(H-E_x-E_y-E_z)=0$ for any three classes $E_x,E_y,E_z$
from the set $\{E_{i_1},E_{i_2},E_{i_4}, E_{i_5},\cdots,E_{i_8}\}$. In particular, $E_{i_4},E_{i_5}$, $E_{i_6},E_{i_7}$ have the
same area, contradicting $\omega(A_6)>0$, $\omega(A_7)>0$. It follows that $\omega(F_1)=\delta_1$ and the remaining classes
have the same area equaling $\delta_2<\delta_1$. With this understood, we note that $\omega(F_1-A_5-A_4)=\omega(2H-2E_{i_1}-2E_{i_2}-E_{i_4}-E_{i_5})\geq 0$
as $2H-2E_{i_1}-2E_{i_2}-E_{i_4}-E_{i_5}$ is a sum of terms of the form $H-E_i-E_j-E_k$, contradicting the constraint $\delta_1<2\delta_2$. This finishes off the proof of the Claim. 

\vspace{1.5mm}

Now back to the discussion on Case (1), we claim that no type ($\alpha$) classes can occur. Suppose 
to the contrary that there is a type ($\alpha$) class, call it $A_1$. It is easy to see that any other type ($\alpha$) class has a negative intersection with $A_1$, hence $A_1$ is the only type ($\alpha$) class. Without loss of generality, let $A_1=H-E_{i_1}-E_{i_9}-E_{i_8}$.  Now let $A$ be
any type ($\beta$) class such that $A\cdot A_1=0$. Then $A$ must contain $E_{i_8}$, and 
furthermore, it is easy to see that maximally, there are three such type ($\beta$) classes which are mutually disjoint. Without loss of generality, they are 
$$
A_2=H-E_{i_2}-E_{i_3}-E_{i_8}, A_3=H-E_{i_4}-E_{i_5}-E_{i_8}, A_4=H-E_{i_6}-E_{i_7}-E_{i_8}
$$
The remaining three classes of $A$ must all have $a$-coefficient equaling $0$, and it
is easy to see that, without loss of generality, they are
$$
A_5=E_{i_2}-E_{i_3}, \; A_6=E_{i_4}-E_{i_5}, \; A_7=E_{i_6}-E_{i_7}.
$$
This possibility can be ruled out using the area constraints as we did in the proof of the Claim. 
Hence no type ($\alpha$) classes can occur.

With the preceding understood, we further observe that no class $A$ with $a=0$ can be realized by 
$F_2,F_3,\cdots,F_8$. Suppose, without loss of generality,  $A_1=E_{i_7}-E_{i_8}$ is realized. 
Let $A$ be a type ($\beta$) class which intersects trivially with $A_1$. Then it is easy to see that either $A$ 
contains both $E_{i_7},E_{i_8}$, or $A$ contains neither $E_{i_7}$ nor $E_{i_8}$. It is clear that there can be 
at most one type ($\beta$) class which contains both $E_{i_7},E_{i_8}$. Without loss of generality, we let it be
$A_2=H-E_{i_2}-E_{i_7}-E_{i_8}$. Then any other type ($\beta$) classes which intersect trivially with $A_1,A_2$
must contain $E_{i_2}$, and there are maximally two such classes: $H-E_{i_2}-E_{i_3}-E_{i_4}$, $H-E_{i_2}-E_{i_5}-E_{i_6}$.
With this understood, note that there are at most two other classes, both having $a=0$, that are allowed, i.e.,
$E_{i_3}-E_{i_4}$, $E_{i_5}-E_{i_6}$, bringing total number of allowable classes for $F_2,F_3,\cdots,F_8$ to $6$.
But apparently, there are not enough many classes, hence our claim.

The above discussions show that the classes of $F_2,F_3,\cdots,F_8$ are all of type ($\beta$).
With this understood, we first rule out the possibility that no triple of $F_2,F_3,\cdots,F_8$ shares
a common $E_i$-class. Suppose to the contrary that this is the case. Then without loss of 
generality, we write 
$$
F_2=H-E_{i_2}-E_{i_3}-E_{i_4}, \;\;\;  F_3=H-E_{i_2}-E_{i_5}-E_{i_6}.
$$
Note that by our assumption, $F_4$ can not contain $E_{i_2}$. With this understood, 
$F_4\cdot F_2=F_4\cdot F_3=0$ implies that we may write $F_4=H-E_{i_3}-E_{i_5}-E_{i_7}$
without loss of generality. Now observe that $F_5$ can not contain $E_{i_2}, E_{i_3},E_{i_5}$.
Hence $F_5=H-E_{i_4}-E_{i_6}-E_{i_7}$ must be true. Now examining the class of $F_6$,
by our assumption it can not contain any of $E_{i_2},E_{i_3},\cdots, E_{i_7}$. This is clearly
a contradiction. Hence the claim.

With the preceding understood, we may write without loss of generality that
$$
F_2=H-E_{i_2}-E_{i_3}-E_{i_4}, \;\;  F_3=H-E_{i_2}-E_{i_5}-E_{i_6},\;\; 
F_4=H-E_{i_2}-E_{i_7}-E_{i_8}.
$$
With this given, it is easy to see that the other four $(-2)$-spheres must be
$$
F_5=H-E_{i_3}-E_{i_5}-E_{i_7}, \;\;\; F_6=H-E_{i_3}-E_{i_6}-E_{i_8},
$$
and 
$$
F_7=H-E_{i_4}-E_{i_5}-E_{i_8}, \;\;\; F_8=H-E_{i_4}-E_{i_6}-E_{i_7}.
$$
This possibility of classes of $F_1,F_2,\cdots,F_8$ is listed as Case (a) of the lemma. 

\vspace{1.5mm}

{\bf Case (2):} Suppose $a\leq 2$ in all eight $(-2)$-spheres $F_1,F_2,\cdots,F_8$.

\vspace{1.5mm}

{\bf (i):} Assume at least two of $F_1,F_2,\cdots,F_8$ have $a$-coefficient equaling $2$. Without loss of generality,
let $F_1,F_2$ be such two $(-2)$-spheres. It is easy to see from $F_1\cdot F_2=0$ that $F_1,F_2$ must have exactly
$4$ $E_i$-classes in common. Hence without loss of generality, we may write them as 
$$
F_1=2H-E_{j_1}-E_{j_2}-E_{j_3}-E_{j_4}-E_{j_5}-E_{j_6}, F_2=2H-E_{j_1}-E_{j_2}-E_{j_3}-E_{j_4}-E_{j_7}-E_{j_8}.
$$
With this understood, we denote by $E_{j_9}$ the unique $E_i$-class that is missing in $F_1$, $F_2$. Moreover, we denote by
$A$ the class of any of the remaining $(-2)$-spheres, i.e., $F_3,F_4,\cdots,F_8$. 

\vspace{1.5mm}

{\bf Claim:} {\it There are no classes $A$ which contains $E_{j_9}$.}

\vspace{1.5mm}

{\bf Proof of Claim:} First, it is easy to see that if $A$ is a class with $a=0$ which contains $E_{j_9}$, the intersection of $A$ with
one of $F_1,F_2$ will be nonzero. Now suppose $A$ is a class with $a=1$ which contains $E_{j_9}$. Then $A\cdot F_1=
A\cdot F_2=0$ implies that $A$ must be of the form $A=H-E_x-E_y-E_{j_9}$ for some $E_x,E_y\in\{E_{j_1},\cdots,E_{j_4}\}$.
With this understood, we note that
$$
F_1+F_2+A+c_1(K_X)=2H-E_{j_1}-E_{j_2}-E_{j_3}-E_{j_4}-E_x-E_y,
$$
which is a sum of terms of the form $H-E_i-E_j-E_k$, leading to a contradiction in areas: 
$\omega(F_1+F_2+A)\geq -c_1(K_X)\cdot [\omega]$. Finally, suppose $A$ is a class 
with $a=2$ which contains $E_{j_9}$. Then $A\cdot F_1=A\cdot F_2=0$ implies that, without loss of generality, 
$$
A=2H-E_{j_1}-E_{j_2}-E_{j_3}-E_{j_5}-E_{j_7}-E_{j_9}. 
$$
In this case, we have $F_1+F_2+A+c_1(K_X)=3H-2E_{j_1}-2E_{j_2}-2E_{j_3}-E_{j_4}-E_{j_5}-E_{j_7}$, which by the same reason also leads to the
contradiction in areas: $\omega(F_1+F_2+A)\geq -c_1(K_X)\cdot [\omega]$. Hence the Claim. 

\vspace{1.5mm}

Now back to the discussion on Case (2), it is easy to see that there are two other classes $A$ with $a=2$ and trivial
mutual intersection, which intersect trivially with $F_1,F_2$; we denote them by $A_1,A_2$, where
$$
A_1=2H-E_{j_1}-E_{j_2}-E_{j_5}-E_{j_6}-E_{j_7}-E_{j_8}, A_2=2H-E_{j_3}-E_{j_4}-E_{j_5}-E_{j_6}-E_{j_7}-E_{j_8}.
$$
On the other hand, let $A$ be a class with $a=1$ which intersects trivially with $F_1,F_2$. Then $A$ must be of the form
$A=H-E_r-E_s-E_t$, where $E_r\in \{E_{j_1}, E_{j_2}, E_{j_3}, E_{j_4}\}$, $E_s\in \{E_{j_5},E_{j_6}\}$, and
$E_t\in \{E_{j_7},E_{j_8}\}$. 

With the preceding understood, if both of $A_1,A_2$ are realized by the $(-2)$-spheres, then it is easy to see that no classes $A$
with $a=1$ can be realized. On the other hand, it is easy to see that there are maximally $4$ classes $A$ with $a=0$:
$$
E_{j_1}-E_{j_2}, \; E_{j_3}-E_{j_4}, \; E_{j_5}-E_{j_6}, \; E_{j_7}-E_{j_8}.
$$
Hence all of them must be realized. With this understood, it is easy to see that three of $F_1,F_2,A_1,A_2$ and all of the classes 
with $a=0$ must have the smaller area $\delta_2$. As a consequence, we may assume without loss of generality that $\omega(F_1)=\omega(E_{j_1}-E_{j_2})$. Then observe that 
$2H-2E_{j_1}-E_{j_3}-E_{j_4}-E_{j_5}-E_{j_6}$ is a sum of terms of the form $H-E_i-E_j-E_k$, 
so that 
$$
\omega(2H-2E_{j_1}-E_{j_3}-E_{j_4}-E_{j_5}-E_{j_6})=\omega(F_1)-\omega(E_{j_1}-E_{j_2})=0
$$
implies that $E_{j_3}, E_{j_4}, E_{j_5}, E_{j_6}$ have the same area. But this contradicts the fact that 
the classes $E_{j_3}-E_{j_4}$, $E_{j_5}-E_{j_6}$ are realized by the symplectic $(-2)$-spheres. 
It follows that $A_1,A_2$ can not be both realized. 

Suppose only one of $A_1,A_2$, say $A_1$, is realized.
Then there are four classes $A$ with $a=1$ that are possible, i.e., 
$$
A_3=H-E_{j_3}-E_{j_5}-E_{j_7}, \; A_4=H-E_{j_3}-E_{j_6}-E_{j_8}, 
$$
and
$$
A_5=H-E_{j_4}-E_{j_5}-E_{j_8},  \; A_6=H-E_{j_4}-E_{j_6}-E_{j_7}.
$$
If all of $A_3,A_4,A_5,A_6$ are realized, then 
the remaining $(-2)$-sphere must have $a$-coefficient equaling $0$, and it must be the class $A_7=E_{j_1}-E_{j_2}$ without loss of generality. 
But this leads to a contradiction in areas as follows: note that 
$$
\omega(F_1-A_7)=\omega(2H-2E_{j_1}-E_{j_3}-E_{j_4}-E_{j_5}-E_{j_6})\geq 0
$$
as $2H-2E_{j_1}-E_{j_3}-E_{j_4}-E_{j_5}-E_{j_6}$ is a sum of terms of the form $H-E_i-E_j-E_k$.
Furthermore, if $\omega(F_1-A_7)=0$, the four classes $E_{j_3}, E_{j_4}, E_{j_5}, E_{j_6}$ must have the same area.
It follows easily that $\omega(A_7)=\delta_2<\delta_1$. The same argument applies with $F_1$ being replaced by $F_2$
or $A_1$. Note that at least two of $F_1,F_2,A_1$ must have the smaller area $\delta_2$. It follows easily that the six classes 
$E_{j_3}, E_{j_4}, E_{j_5}, E_{j_6},E_{j_7},E_{j_8}$ must have the same area. But this would imply that all the eight $(-2)$-spheres
have the same area, which is a contradiction. Finally, note that if any of $A_3,A_4,A_5,A_6$ is realized, $A_7$ is the only possible
class with $a=0$. If none of $A_3,A_4,A_5,A_6$ is realized, the allowable classes with $a=0$ are $E_{j_3}-E_{j_4}$, 
$E_{j_5}-E_{j_6}$, $E_{j_7}-E_{j_8}$, in addition to $A_7$. It follows that neither $A_1$ nor $A_2$ can be realized. 

The above discussion shows that $F_1,F_2$ are the only two $(-2)$-spheres with $a=2$. From the discussion, it is also clear
that the maximal number of mutually disjoint classes with $a=1$ which intersect trivially with $F_1,F_2$ is $4$, which,
without loss of generality, are given by $A_3,A_4,A_5,A_6$. If any of them is realized, there is only one possible class with $a=0$,
i.e., $A_7=E_{j_1}-E_{j_2}$. If none of the $a=1$ classes are realized, then there are maximally $4$ classes with $a=0$ that
are allowed. In any event, we do not have enough classes that can be realized. Thus (i) is eliminated. 

\vspace{1.5mm}

{\bf (ii):} Assume only one of $F_1,F_2,\cdots,F_8$ has $a$-coefficient equaling $2$. Without loss of generality, assume it is $F_1$,
and we write 
$$
F_1=2H-E_{k_1}-E_{k_2}-E_{k_3}-E_{k_4}-E_{k_5}-E_{k_6}.
$$
We denote the remaining three $E_i$-classes by $E_{k_7},E_{k_8},E_{k_9}$, and denote by $A$ the class of any of
the $(-2)$-spheres $F_2,F_3,\cdots,F_8$. 

Examining classes $A$ with $a=1$ which intersect trivially with $F_1$, we note that $A$ must be of the form 
$$
A=H-E_r-E_s-E_t, \mbox{ where } E_r,E_s\in \{ E_{k_1},\cdots,E_{k_6}\} \mbox{ and } E_t\in \{E_{k_7},E_{k_8},E_{k_9}\}.
$$

Consider first the case where amongst the classes $A$ with $a=1$, the $E_i$-classes $E_{k_1},E_{k_2},\cdots,E_{k_6}$
can only appear once. It is easy to see that in this case, all the $a=1$ classes must have a common $E_i$-class which must
be one of $E_{k_7},E_{k_8},E_{k_9}$. It is clear that there are maximally three such classes with $a=1$, i.e., 
$$
H-E_{k_1}-E_{k_2}-E_{k_7}, \; \; H-E_{k_3}-E_{k_4}-E_{k_7}, \;\; H-E_{k_5}-E_{k_6}-E_{k_7}
$$
without loss of generality. The remaining four $(-2)$-spheres must have $a$-coefficient equaling $0$, and they must be 
$$
E_{k_1}-E_{k_2}, \; E_{k_3}-E_{k_4}, \; E_{k_5}-E_{k_6}, E_{k_8}-E_{k_9}
$$
without loss of generality. With this understood, we note that the area of $F_1$ must be the larger 
$\delta_1$, with the remaining seven $(-2)$-spheres having area $\delta_2$. However, 
as $2H-2E_{k_1}-2E_{k_3}-E_{k_5}-E_{k_6}$ is a sum of terms of the form $H-E_i-E_j-E_k$, 
it follows that 
$$
\omega(F_1)-\omega(E_{k_1}-E_{k_2})-\omega(E_{k_3}-E_{k_4})=
\omega(2H-2E_{k_1}-2E_{k_3}-E_{k_5}-E_{k_6})\geq 0,
$$
which contradicts the constraint $\delta_1<2\delta_2$. Hence this first case is ruled out.

Next we assume that the $E_i$-classes $E_{k_1},E_{k_2},\cdots,E_{k_6}$ can appear at most
twice in the $a=1$ classes, and at least one of them, say $E_{k_1}$, appeared twice. 
Then without loss of generality, we may assume 
$$
A_1=H-E_{k_1}-E_{k_2}-E_{k_7}, \; A_2=H-E_{k_1}-E_{k_3}-E_{k_8}
$$
are realized by the $(-2)$-spheres. Since there are at most $4$ mutually disjoint classes with 
$a=0$ that can possibly be realized by the $(-2)$-spheres, we must have another $a=1$ class,
call it $A_3$. By our assumption, $A_3$ can not contain $E_{k_1}$. The fact that $A_3$ 
intersects trivially with $A_1,A_2$ implies that either $A_3=H-E_{k_2}-E_{k_3}-E_{k_9}$,
or without loss of generality, $A_3=H-E_{k_3}-E_{k_4}-E_{k_7}$. In the former case,
none of $E_{k_1},E_{k_2},E_{k_3}$ can appear anymore by our assumption, which implies 
easily that there can be no more $a=1$ classes. On the other hand, 
there is only one possible $a=0$ class, say $E_{k_5}-E_{k_6}$. Hence the former case is
not possible. In the latter case, $E_{k_1},E_{k_3},E_{i_7}$ can no longer appear. We note that
there is only one possible $a=0$ class, i.e., $E_{k_5}-E_{k_6}$, so there must be 
three more $a=1$ classes. Call them $A_4,A_5,A_6$. Then observe that $A_4,A_5,A_6$
intersect trivially with $A_2$, so all of them must contain $E_{k_8}$. Likewise, 
$A_4,A_5,A_6$ intersect trivially with $A_1$, so that they must all contain $E_{k_2}$,
which is clearly a contradiction. Thus this second case is also ruled out.

Finally, assume one of the $E_i$-classes $E_{k_1},E_{k_2},\cdots,E_{k_6}$, say $E_{k_1}$, 
appears in the $a=1$ classes three times. Without loss of generality, we assume 
$$
A_1=H-E_{k_1}-E_{k_2}-E_{k_7}, \; A_2=H-E_{k_1}-E_{k_3}-E_{k_8}, \;
A_3=H-E_{k_1}-E_{k_4}-E_{k_9}
$$
are realized by the $(-2)$-spheres. Again, there is only one possible $a=0$ class, i.e., 
$E_{k_5}-E_{k_6}$, so there must be three more $a=1$ classes, which are denoted by
$A_4,A_5,A_6$. It is easy to see that the following are the only possibility:
$$
A_4=H-E_{k_3}-E_{k_4}-E_{k_7}, \; A_5=H-E_{k_2}-E_{k_4}-E_{k_8}, \;
A_6=H-E_{k_2}-E_{k_3}-E_{k_9}.
$$
In order to rule out this last case, we observe that 
$$
F_1+\sum_{i=1}^6 A_i +c_1(K_X)= 5H-3(E_{k_1}+\cdots+E_{k_4})-E_{k_7}-E_{k_8}-E_{k_9}.
$$
The right-hand side is a sum of terms of the form $H-E_i-E_j-E_k$, hence has non-negative area. 
But this leads to a contradiction to the constraint
$7\delta_i < -c_1(K_X)\cdot [\omega]$ for $i=1,2$. Hence (ii) is also eliminated.

\vspace{1.5mm}

{\bf (iii):} It remains to consider the case where the $a$-coefficient of $F_1,F_2,\cdots,F_8$ 
equals either $1$ or $0$. We begin by noting that there are at least four $(-2)$-spheres with
$a=1$. 

The first possibility is that each $E_i$-class appears amongst the $a=1$ classes at most three times. 
To analyze this case, we take two of the $(-2)$-spheres with $a=1$, say $F_1,F_2$, and we write them as
$$
F_1=H-E_{l_1}-E_{l_2}-E_{l_3}, \; F_2=H-E_{l_1}-E_{l_4}-E_{l_5}.
$$
Assume $F_3$ also has $a$-coefficient equaling $1$. Then there are two possibilities for $F_3$:
either $F_3=H-E_{l_1}-E_{l_6}-E_{l_7}$ or $F_3=H-E_{l_2}-E_{l_4}-E_{l_6}$ without loss of
generality. There is at least one more $(-2)$-sphere with $a=1$, say $F_4$. Then
if $F_3=H-E_{l_1}-E_{l_6}-E_{l_7}$, we may assume without loss of generality that
$F_4=H-E_{l_2}-E_{l_4}-E_{l_6}$ because of our assumption that each $E_i$-class appears 
amongst the $a=1$ classes at most three times. If $F_3=H-E_{l_2}-E_{l_4}-E_{l_6}$ in the latter case,
we may assume $F_4=H-E_{l_3}-E_{l_5}-E_{l_6}$ (note that the other choice $F_4=H-E_{l_1}-E_{l_6}-E_{l_7}$ is equivalent to the former case). In any event, with these choices for 
$F_1,F_2,F_3,F_4$, there can be at most one $(-2)$-sphere with $a=0$. Consequently, there
must be three more $(-2)$-spheres with $a=1$. One can check easily that without loss of
generality, in this case the eight $(-2)$-spheres are 
$$
F_1=H-E_{l_1}-E_{l_2}-E_{l_3}, \; F_2=H-E_{l_1}-E_{l_4}-E_{l_5}, \; F_3=H-E_{l_1}-E_{l_6}-E_{l_7}, 
$$
$$
F_4=H-E_{l_2}-E_{l_4}-E_{l_6}, \; F_5=H-E_{l_3}-E_{l_5}-E_{l_6}, \; F_6=H-E_{l_2}-E_{l_5}-E_{l_7},
$$
$$
F_7=H-E_{l_3}-E_{l_4}-E_{l_7}, \mbox{ and } F_8=E_{l_8}-E_{l_9},
$$
which is listed as Case (b) of the lemma. 

The remaining possibility is that one of the $E_i$-classes appears in the $a=1$ classes four times.
In this case, it is easy to check that without loss of generality, the eight $(-2)$-spheres are 
$$
F_1=H-E_{l_1}-E_{l_2}-E_{l_3}, F_2=H-E_{l_1}-E_{l_4}-E_{l_5}, F_3=H-E_{l_1}-E_{l_6}-E_{l_7}, 
$$
$$
F_4=H-E_{l_1}-E_{l_8}-E_{l_9}, F_5=E_{l_2}-E_{l_3}, F_6=E_{l_4}-E_{l_5}, F_7=E_{l_6}-E_{l_7}, 
F_8=E_{l_8}-E_{l_9}.
$$
This is listed as Case (c) of the lemma. The proof of the lemma is complete. 

\end{proof}

In the following lemma, $D\subset \C$ is an open disc centered at the origin, with radius unspecified.
Let $\Psi: D\times D\rightarrow \C^2$ be a diffeomorphism onto a neighborhood of $0\in \C^2$, given by equations
$z_1=\psi(z,w)$, $z_2=w$, where $z_1,z_2$ are the standard holomorphic coordinates on $\C^2$ and $z,w$ are 
a local complex coordinate on the first and second factor in $D\times D$. Furthermore, assume $\Psi$ satisfies 
the following conditions: $\psi(z,w)$ is holomorphic in $w\in D$ (but only $C^\infty$ in $z\in D$), 
and $\psi(0,w)=0$ for all $w\in D$. 

\begin{lemma}
Let $C\subset \C^2$ be an embedded holomorphic disc containing the origin, where $C$ intersects the $z_2$-axis 
with a tangency of order $n>1$. Let $F: D\rightarrow \C^2$ be a holomorphic parametrization of $C$ such that $F(0)=0$.
Then the map $\pi_1\circ \Psi^{-1}\circ F: D\rightarrow D$ is an $n$-fold branched covering in a neighborhood of 
$0\in D$, ramified at $0$, where $\pi_1: D\times D\rightarrow D$ is the projection onto the first factor. 
\end{lemma}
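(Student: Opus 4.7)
The plan is to put $F$ in a normal form, compute $\pi_1\circ\Psi^{-1}\circ F$ explicitly, and then recognize the resulting smooth map as the standard $n$-th power map precomposed with a smooth local diffeomorphism. First I would reparametrize $F$: since $C$ is embedded I may take $F$ to be a holomorphic immersion, and writing $F=(f_1,f_2)$, the tangency to the $z_2$-axis forces $f_1'(0)=0$, hence $f_2'(0)\neq 0$; the order-$n$ tangency hypothesis gives $f_1(z)=az^n+O(z^{n+1})$ with $a\neq 0$. Changing coordinates on $D$ via the local biholomorphism $w=f_2(z)$---which is harmless for the conclusion, as precomposition with a local biholomorphism preserves being a branched cover---I reduce to $F(w)=(g(w),w)$ with $g$ holomorphic and $g(w)=aw^n+O(w^{n+1})$.

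Next I would invert $\Psi$. The assumptions force $\Psi^{-1}(z_1,z_2)=(\phi(z_1,z_2),z_2)$ with $\phi$ smooth and $\phi(0,z_2)=0$, so $\phi(z_1,z_2)=z_1\,h(z_1,z_2)$ for some smooth $h$ with $h(0,0)\neq 0$ (since $\Psi$ is a diffeomorphism, $\psi_z(0,0)\neq 0$). Substituting,
$$
\pi_1\circ\Psi^{-1}\circ F(w)=\phi(g(w),w)=g(w)\cdot h(g(w),w)=w^n\cdot q(w),
$$
where $q(w):=(g(w)/w^n)\cdot h(g(w),w)$ is smooth in $w$ with $q(0)=a\cdot h(0,0)\neq 0$.

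To finish, I would show that any smooth map $P(w)=w^n q(w)$ with $q$ smooth and $q(0)\neq 0$ is an $n$-fold branched covering near $0$, ramified at $0$. Shrinking the disc so that $q$ takes values in a simply connected subset of $\C\setminus\{0\}$, I pick a smooth branch $Q:=q^{1/n}$, which satisfies $Q(0)\neq 0$, and set $u(w):=wQ(w)$. The real differential of $u$ at $0$ is complex multiplication by $Q(0)$, hence invertible, so $u$ is a smooth local diffeomorphism at $0$; and $P(w)=u(w)^n$ expresses the map as $z\mapsto z^n$ precomposed with $u$, which is the standard local model of an $n$-fold branched cover ramified at $0$.

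The main obstacle is conceptual rather than computational: since $\psi$ is only smooth (not holomorphic) in its first variable, direct appeals to holomorphic-singularity theory are unavailable. The key observations that unlock the argument are that $\Psi^{-1}$ nevertheless retains the skew-triangular form $(z_1h(z_1,z_2),z_2)$ with $h$ smooth and nonzero at the origin, and that smooth nonvanishing complex-valued functions admit smooth $n$-th roots on simply connected domains; combined with the holomorphic vanishing of $g$ to order exactly $n$, these observations reduce the lemma to the classical model $w\mapsto w^n$.
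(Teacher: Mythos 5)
Your reduction of $F$ to the normal form $(g(w),w)$ with $g$ holomorphic and vanishing to order exactly $n$ is fine, and your final step (smooth $n$-th root, $u(w)=wQ(w)$ a local diffeomorphism) would be fine \emph{if} you had $P(w)=w^nq(w)$ with $q$ smooth and $q(0)\neq 0$. The gap is in the step that produces this form: the factorization $\phi(z_1,z_2)=z_1\,h(z_1,z_2)$ with $h$ smooth is not available under the stated hypotheses. The map $\psi$ is only $C^\infty$ in $z$ (this is the whole point of the lemma), so the first component $\phi$ of $\Psi^{-1}$ is only $C^\infty$ in $z_1$, and Hadamard's lemma applied to $\phi(0,z_2)=0$ gives $\phi(z_1,z_2)=z_1\alpha(z_1,z_2)+\bar z_1\beta(z_1,z_2)$, not $z_1 h$. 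Your parenthetical justification ``$\psi_z(0,0)\neq 0$'' conflates invertibility of the real differential (which only gives $|\psi_z|^2-|\psi_{\bar z}|^2\neq 0$) with complex-linearity. A concrete counterexample to your factorization: $\psi(z,w)=z+\epsilon\bar z$ with $0<|\epsilon|<1$ satisfies every hypothesis on $\Psi$, yet then $\phi$ has a genuine $\bar z_1$-term and $\pi_1\circ\Psi^{-1}\circ F(w)=a'w^n+\epsilon' \bar a'\bar w^n+\cdots$, so $P(w)/w^n$ does not extend continuously to $0$ and no smooth (or even continuous) $q$ exists. The conclusion of the lemma still holds in that example, but your route to it breaks.

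This is exactly the difficulty the paper's proof is designed to handle. There, one observes that $\Psi^{-1}\circ F$ is $J$-holomorphic for the pulled-back almost complex structure $J=(D\Psi)^{-1}J_0 D\Psi$, computes that the graph function $f$ satisfies $\partial_{\bar w}f=\tfrac12(AJ_0A^{-1}J_0+I)B$ with $B=0$ along $\{z=0\}$, hence $|\partial_{\bar w}f|\le C|f|$, and invokes the Carleman similarity principle to write $f(w)=w^ng(w)$ with $g$ nonvanishing but only of class $C^\alpha$. Because $g$ is merely H\"older, one cannot then argue as you do that $wg(w)^{1/n}$ is a local diffeomorphism; instead the paper counts solutions of $f(w)=c$ by a fixed-point argument and caps the count at $n$ using the local intersection number with the discs $\{z=c\}$. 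To repair your proof you would need to replace the false smooth factorization by some such analytic input; a purely algebraic manipulation of $\Psi^{-1}$ cannot succeed.
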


\begin{proof}
Considering the parametrization $\Psi^{-1}\circ F$ of $C$ in the coordinates $(z,w)$, it is clear that after a re-parametrization of 
the domain $D$ if necessary, we may assume that $\Psi^{-1}\circ F$ is given by $z=f(\xi)$, $w=\xi$, where $\xi$ is a local holomorphic coordinate on the domain $D$. We remark that $\Psi^{-1}\circ F$ is $J$-holomorphic with respect to the almost complex structure 
$J$ on $D\times D$, where $J$ is the pullback of the standard complex structure on $\C^2$ via $\Psi$. 

We shall compute $\partial_{\bar{w}} f$ for the function $f$, where $f$ is considered a function of $w$ (as $w=\xi$).
To this end, we set $z_k=x_k+ \sqrt{-1} y_k$, $k=1,2$, and
$z=s+\sqrt{-1} t$, $w=u+\sqrt{-1}v$. Then with respect to the coordinates $(s,t,u,v)$ and $(x_1,y_1,x_2,y_2)$, the Jacobian of $\Psi$ 
is given by the matrix 
$$
D\Psi =\left (\begin{array}{cc}
A & B\\
0 & I
\end{array} \right ),
$$
where $A=\left (\begin{array}{cc}
\frac{\partial x_1}{\partial s} & \frac{\partial x_1}{\partial t}\\
\frac{\partial y_1}{\partial s} & \frac{\partial y_1}{\partial t}
\end{array} \right )$, $B=\left (\begin{array}{cc}
\frac{\partial x_1}{\partial u} & \frac{\partial x_1}{\partial v}\\
\frac{\partial y_1}{\partial u} & \frac{\partial y_1}{\partial v}
\end{array} \right )$. 
Let $J_0=\left (\begin{array}{cc}
0 & -1\\
1 & 0
\end{array} \right )$ be the matrix representing the standard complex structure. Then the assumptions that 
$\psi(z,w)$ is holomorphic in $w\in D$ and $\psi(0,w)=0$ for all $w\in D$ imply that $J_0 B =B J_0$ and $B=0$ along the
disc $z=0$.

With the preceding understood, we note that the almost complex structure $J$ is given by the matrix
$$
J=D\Psi \left (\begin{array}{cc}
J_0 & 0\\
0 & J_0
\end{array} \right ) (D\Psi)^{-1}= \left (\begin{array}{cc}
A J_0 A^{-1} & (-AJ_0A^{-1} +J_0) B\\
0 & J_0
\end{array} \right ). 
$$
Now the Jacobian of $\Psi^{-1}\circ F$ is $\left (\begin{array}{c}
Df \\
I
\end{array} \right )$ where $Df$ is the Jacobian of $f$. If follows easily that the $J$-holomorphic equation satisfied by 
$\Psi^{-1}\circ F$, i.e., 
$$
J\left (\begin{array}{c}
Df \\
I
\end{array} \right )=\left (\begin{array}{c}
Df \\
I
\end{array} \right ) J_0
$$
is equivalent to the equation $Df+(AJ_0A^{-1}) \cdot Df \cdot J_0=(AJ_0A^{-1}J_0+I)B$. Intrinsically, this can be written as 
$$
\partial_{\bar{w}} f=\frac{1}{2} (AJ_0A^{-1}J_0+I)B. 
$$

With the above understood, we note that since $B=0$ along the disc $z=0$, we have $||B||\leq C_1 |z|$ near $z=0$ for 
some constant $C_1>0$. It follows easily that the function $f$ obeys the inequality $|\partial_{\bar{w}} f|\leq C_2 |f|$ for some
constant $C_2>0$. By the Carleman similarity principle (e.g. see Siebert-Tian \cite{ST}, Lemma 2.9), there is a complex valued
function $g$ of class $C^\alpha$ and a holomorphic function $\phi$, such that $f(w)=\phi(w)g(w)$, where $g(0)\neq 0$.
Note that $\phi$ vanishes at $w=0$ of order $n$ because by the assumption, the holomorphic disc $C$ intersects the $z_2$-axis 
with a tangency of order $n$. After a further change of coordinate, we may assume that $f(w)=w^n g(w)$ for a $C^\alpha$-class
function $g$, where $w\in D$.

Our next goal is to show that for any $c\neq 0$, with $|c|$ sufficiently small, the equation
$$
f(w)=c
$$
has exactly $n$ distinct solutions lying in a small neighborhood of $0\in D$. To see this, we take $h(w)$ to be an $n$-th root
of the function $g(w)$, i.e., $h(w)^n=g(w)$, which is also of $C^\alpha$-class. Let $\lambda_1,\lambda_2,\cdots,\lambda_n$
be the $n$-th roots of $c$. For each $i=1,2,\cdots,n$, we consider the equation 
$$
wh(w)=\lambda_i. 
$$
Set $P(w):=\frac{1}{h(0)} (\lambda_i- w(h(w)-h(0)))$. Then the above equation becomes $w=P(w)$.
With this understood, let $B(r)\subset D$ be the closed disc of radius $r$. Then for $r>0$ sufficiently small, 
$P: B(r)\rightarrow B(r)$ is a well-defined continuous map,  as long as $|\lambda_i|\leq \frac{1}{2} |h(0)|\cdot r$. 
Now we pick any $w_1\in B(r)$ and define inductively $w_{k+1}=P(w_k)$ for $k\geq 1$. Since $B(r)$ is compact, 
the sequence $\{w_k\}$ has a convergent subsequence. The limit $w_0\in B(r)$ satisfies the equation $w_0=P(w_0)$.

It follows easily that when $c\neq 0$ lies in the disc of radius $(\frac{1}{2} |h(0)|\cdot r)^n$, the equation $f(w)=c$ has at least $n$ distinct solutions, all lying in the disc $B(r)$. The local intersection number of the holomorphic disc $C$ with each holomorphic disc $z=c$ equals $n$. This implies that the equation $f(w)=c$ has precisely $n$ distinct solutions in $B(r)$, and the intersection of $C$ with each holomorphic disc $z=c\neq 0$ is transversal. It follows 
easily that the map $\pi_1\circ \Psi^{-1}\circ F: D\rightarrow D$ 
is an $n$-fold branched covering in a neighborhood of $0\in D$, ramified at $0$. 
This finishes the proof.

\end{proof}

With these preparations, we now prove the main theorems. 

\vspace{1.5mm}

\noindent{\bf Proof of Theorem 1.1:}

\vspace{1.5mm}

We first consider the case where $M_G$ is irrational ruled. It is easily seen that there is a subgroup $H$ of prime order $p$ such that $M_H$ is irrational ruled. By Lemma 2.2 and Lemma 2.6(i), the fixed-point set of $H$ consists of only tori of self-intersection zero. Moreover, from the proofs
it is known that $M_H$ is a $\s^2$-bundle over $T^2$, and $M$ is simply a branched cover of $M_H$ along the fixed-point set.

With this understood, we denote by $\{B_i\}$ the image of the fixed-point set of $H$ in $M_H$, which is a disjoint union of
symplectic tori of self-intersection zero. Let $F$ be the fiber class of the $\s^2$-fibration on $M_H$. Then we note that 
$c_1(K_{M_H})=\frac{1-p}{p}\sum_i B_i$ (cf. Proposition 3.2 in \cite{C3}), and $c_1(K_{M_H})\cdot F=-2$. It follows easily
that $p=2$ or $3$, and $(\sum_i B_i)\cdot F=4$ or $3$ accordingly.

To proceed further, we choose an $\omega$-compatible almost complex structure $J$ on $M_H$, where $\omega$ denotes the
symplectic structure on $M_H$, such that $J$ is integrable in a neighborhood of each $B_i$. Note that this is possible because 
$\omega$ admits a standard model near each $B_i$. Now by Gromov's theory, there exists a $\s^2$-bundle structure on $M_H$,
with base $T^2$ and each fiber $J$-holomorphic. We denote by $\pi: M_H\rightarrow T^2$ the corresponding projection onto the
base. Then by Lemma 5.2, the restriction $\pi|_{B_i}: B_i\rightarrow T^2$ is a branched covering where the ramification occurs 
exactly at the non-transversal intersection points of $B_i$ with the fibers. But each $B_i$ is a torus, so that $\pi|_{B_i}$ must be
unramified, or equivalently, $B_i$ intersects each fiber transversely. With this understood, it follows easily that the pre-image of
each fiber of the $\s^2$-bundle in $M$ is a symplectic torus (here we use the fact that
$(\sum_i B_i)\cdot F=4$ or $3$ respectively according to whether $p=2$ or $3$), 
giving $M$ a structure of a $T^2$-bundle over $T^2$ with
symplectic fibers.  This finishes the proof for the case where $M_G$ is irrational ruled.

Next we assume $M_G$ is rational and $G=\Z_2$. By Lemma 2.3 and Lemma 2.6(ii), the fixed-point set $M^G$ consists of $8$ isolated points and a disjoint union of $2$-dimensional 
components $\{Y_i\}$, where 
$\sum_i Y_i^2=2(1-b_2^{-}(M/G))$, and $b_2^{-}(M/G)\in \{0,1,2\}$. We denote by $B_i$ the image of $Y_i$ in $M_G$. Then $B_i^2=2Y_i^2$ for each $i$, and 
$c_1(K_{M_G})=-\frac{1}{2}\sum_i B_i$ (cf. \cite{C3}, Proposition 3.2), so that 
$$
c_1(K_{M_G})^2=\frac{1}{4}\sum_i B_i^2=1-b_2^{-}(M/G). 
$$
It follows easily that $M_G=\C\P^2\# N\overline{\C\P^2}$ where $N=8,9$ or $10$, 
corresponding to $b_2^{-}(M/G)=0$, $1$ or $2$ respectively. Moreover, note that $M_G$ contains $8$ symplectic $(-2)$-spheres 
coming from the resolution of the $8$ isolated singular points of $M/G$. 

By Theorem 1.4, the case where $M_G=\C\P^2\# 8\overline{\C\P^2}$ is immediately ruled out. The case where 
$M_G=\C\P^2\# 10\overline{\C\P^2}$ is ruled out as follows. We consider the double branched cover $Z$ of $M_G$ with
branch loci $\{B_i\}$. Then $Z$ is easily seen a symplectic Calabi-Yau $4$-manifold with $b_1=0$, which is an integral homology $K3$ surface (compare \cite{C}, Theorems 1.1 and 1.2). 
Note that $Z$ contains $16$ embedded $(-2)$-spheres in the complement of the branch set. 
Now observe that in the case of $M_G=\C\P^2\# 10\overline{\C\P^2}$, $\sum_i Y_i^2=-2$, so that there must be one 
$Y_i$ with $Y_i^2<0$. This $Y_i$ gives rise to an embedded $(-2)$-sphere in $Z$, in addition to the 
$16$ embedded $(-2)$-spheres, so that $Z$ contains $17$ disjointly embedded $(-2)$-spheres. 
But this contradicts a theorem of Ruberman in \cite{Ru}, which says that an integral homology
$K3$ surface can contain at most $16$ disjointly embedded $(-2)$-spheres. 
Hence $M_G=\C\P^2\# 10\overline{\C\P^2}$ is ruled out. Finally, we note that the same argument shows that in the case of $M_G=\C\P^2\# 9\overline{\C\P^2}$, the surfaces $B_i$
must be tori of self-intersection zero.

We continue by analyzing the case of $M_G=\C\P^2\# 9\overline{\C\P^2}$ in more detail. First,
we note that there are at most two components in $\{B_i\}$. This is because 
$c_1(K_{M_G})=-\frac{1}{2}\sum_i B_i$, and the $a$-coefficient of each $B_i$ with respect to
a given reduced basis is at least $3$ (cf. Lemma 4.2(2)). Next, we determine the homology classes 
of the $8$ symplectic $(-2)$-spheres $F_1,F_2,\cdots,F_8$ in $M_G$. By Lemma 4.1, we can choose
a symplectic structure on $M_G$ so that the area constraints in Lemma 5.1 are satisfied. 
(Note that this is possible because $-c_1(K_{M_G})\cdot [\omega]=\frac{1}{2}\sum_i \omega(B_i)>0$.) Then the classes of $F_1,F_2,\cdots,F_8$ are given in $3$ cases as listed in Lemma 5.1. We claim that case (a) and case (b) cannot occur. To see this, suppose we are in case (a). It is easy to check, with the area constraints in Lemma 5.1, that the class $E_{i_9}$ has the smallest area among the
$E_i$-classes in the reduced basis. With this understood, we choose an almost complex structure $J$
such that each symplectic $(-2)$-sphere $F_k$ is $J$-holomorphic. Then by Lemma 3.2, the class
$E_{i_9}$ can be represented by a $J$-holomorphic $(-1)$-sphere $C$. Symplectically blow down $M_G$ along $C$, noting that $C$ is disjoint from the $(-2)$-spheres $F_k$ as $C\cdot F_k=0$,
we obtain $8$ disjointly embedded symplectic $(-2)$-spheres in $\C\P^2\# 8\overline{\C\P^2}$,
contradicting Theorem 1.4. Case (b) is similarly eliminated. Consequently, the homology classes
of $F_1,F_2,\cdots,F_8$ are given by case (c) of Lemma 5.1. 

Our next step is to show that there is an embedded symplectic sphere with self-intersection zero, denoted by $F$, which lies in the complement of $F_1,F_2,\cdots,F_8$ and intersects transversely
and positively with $B_i$. This can be seen as follows. It is easy to check that in case (c) of 
Lemma 5.1, the class $E_{l_1}$ has the largest area. By Lemma 3.2, we can choose $\omega$-compatible almost complex structures $J$ so that $B_i$ and $F_k$ are all $J$-holomorphic, and successively represent the classes $E_{l_s}$, $s\geq 2$, beginning with the one of the smallest area, by a $J$-holomorphic
$(-1)$-sphere. By successively symplectically blowing down the classes $E_{l_s}$, $s\geq 2$,
we reach $\C\P^2\# \overline{\C\P^2}$, with $E_{l_1}$ being the $(-1)$-class (see \cite{C}, 
Section 4, for a discussion on the general procedure). Note that the 
$(-2)$-spheres $F_1,F_2,F_3,F_4$ descend to $4$ disjointly embedded symplectic spheres 
of self-intersection zero (they all have class $H-E_{l_1}$); in fact there is a symplectic 
$\s^2$-fibration of $\C\P^2\# \overline{\C\P^2}$ containing them as fibers. With this understood, we can take a fiber $F$ in the complement which intersects transversely and positively with the descendant of $B_i$ in $\C\P^2\# \overline{\C\P^2}$. We then symplectically blow up 
$\C\P^2\# \overline{\C\P^2}$ successively, reversing the symplectic blowing down procedure, 
in order to go back to $M_G$. In this way, we recover the $8$ symplectic $(-2)$-spheres $F_1,F_2,\cdots,F_8$ and the tori $B_i$, although the symplectic structure on $M_G$ may be different since we don't keep track of the sizes of the symplectic blowing up.

Now we symplectically blow down $F_1,F_2,\cdots,F_8$, which results in a symplectic $4$-orbifold
$X$ with $8$ isolated singular points, all of isotropy of order $2$. In the complement of the
singularities, there lies the embedded symplectic sphere $F$ with $F^2=0$, and the tori $B_i$.
By \cite{Gompf}, we can assume that $F$ and $B_i$ intersect symplectically orthogonally without
loss of generality. 

With the preceding understood, we consider the set $\J$ of $\omega$-compatible almost complex structures 
on $X$ which satisfy the following conditions: fix a sufficiently small regular neighborhood $V$ of
$\cup_i B_i$, not containing any singular points of $X$, and fix an integrable 
$\omega$-compatible almost 
complex structure $J_0$ on $V$, then for each $J\in \J$, $J=J_0$ on $V$ and $F$ is
$J$-holomorphic. With this understood, note that for any $J\in \J$, the deformation of the
$J$-holomorphic
sphere $F$ is unobstructed (cf. \cite{HLS}). We denote by $\M_J$ the moduli space of $J$-holomorphic spheres 
having the homology class of $F$. Then $\M_J\neq \emptyset$ and is a smooth $2$-dimensional
manifold. In the present situation, $\M_J$ is not compact, but can be compactified using the 
orbifold version of Gromov compactness theorem (cf. \cite{ChenS1, CR}). The key issue here is to
understand the compactification $\overline{\M_J}$ of $\M_J$, at least for a generic $J\in\J$.

\begin{lemma}
Let $\{S_n\}$ be a sequence in $\M_J$ which converges to a Gromov limit $\sum_i m_i C_i\in
\overline{\M_J}\setminus \M_J$. Then for a generic $J\in\J$, 
$\{C_i\}$ consists of a single component of multiplicity $2$,
which is an embedded orbifold sphere containing exactly $2$ singular points of $X$.
\end{lemma}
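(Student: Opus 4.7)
The plan is to combine orbifold Gromov compactness with positivity of intersection, the orbifold adjunction formula, and a virtual dimension count, in order to pin down the unique bubbling configuration consistent with $[F]^2=0$ and generic $J\in\J$.

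First, I would invoke Gromov compactness in the orbifold category (cf.\ \cite{ChenS1,CR}) to conclude that every component $C_i$ of the limit is a rational orbifold $J$-holomorphic curve; since $[F]$ is represented by a sphere and the arithmetic genus is preserved in the limit, no positive-genus components can appear. Write $m_i$ for the multiplicity of $C_i$ in the stable map, $k_i$ for the number of singular points of $X$ lying on $C_i$, and $b_i:=C_i\cdot\sum_j B_j$. Because $J$ agrees with the integrable $J_0$ in the neighborhood $V$ of $\cup_j B_j$ and each $B_j$ is $J_0$-holomorphic, positivity of intersection yields $C_i\cdot B_j\geq 0$ and $C_i\cdot C_\ell\geq 0$ for $i\neq\ell$.

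Second, using $K_X=-\tfrac{1}{2}\sum_j B_j$ from Proposition 3.2 of \cite{C3} together with the orbifold adjunction formula (each order-$2$ singular point of $X$ on $C_i$ contributes $\tfrac12$ to the cone-angle defect in the orbifold Euler characteristic), I would derive the identity
\begin{equation*}
[C_i]^2_{\text{orb}}=\tfrac{1}{2}(b_i+k_i)-2.
\end{equation*}
Combined with $[F]^2=0$ and $F\cdot\sum_j B_j=4$, this produces the numerical constraints
\begin{equation*}
\sum_i m_i b_i=4,\qquad \sum_i m_i^2\bigl(\tfrac{1}{2}(b_i+k_i)-2\bigr)+2\sum_{i<\ell}m_i m_\ell\,(C_i\cdot C_\ell)=0,
\end{equation*}
together with the parity condition $b_i+k_i\in 2\Z$, which holds because the preimage of $C_i$ under the branched double cover $\pi\colon M\to X$ (branched along the $B_j$'s and at the $8$ singular points) is a smooth closed Riemann surface of integer genus $(b_i+k_i-2)/2$.

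Third, for generic $J\in\J$ the virtual real dimension of the moduli of orbifold $J$-holomorphic spheres in class $[C_i]$ with the prescribed orbifold markings equals $\operatorname{vdim}(C_i)=b_i+k_i-2$. Imposing $\operatorname{vdim}(C_i)\geq 0$ and demanding that the total stratum be compatible with the boundary of the $2$-dimensional $\M_J$, and using integrality of $C_i\cdot C_\ell$ away from the (disjoint) orbifold subsets hit by distinct components, a short case analysis rules out every genuine multi-component configuration. In the remaining single-component case $[F]=m[C]$ the relations give $m b=4$, $b+k=4$, and $b+k\in 2\Z$; among $m\in\{1,2,4\}$ only $m=2$, $b=2$, $k=2$ is compatible, since $m=1$ lies in the interior of $\M_J$ (excluded by hypothesis) and $m=4$ forces the odd value $k=3$. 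Finally, $[C]^2_{\text{orb}}=0$ together with genericity of $J$ rules out nodes or multiple covers of a lower-degree sphere, so $C$ is embedded.

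The main obstacle will be executing the virtual dimension count rigorously in the orbifold Gromov--Witten setting and handling the twisted-sector intersection theory carefully enough to eliminate the multi-component strata uniformly; once these orbifold-theoretic subtleties are under control, the numerical constraints above pin down the single-component answer $m=2$, $b=2$, $k=2$ uniquely.
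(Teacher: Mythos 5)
Your overall strategy (orbifold Gromov compactness, positivity of intersections, adjunction, and an index count) is the same general toolkit the paper uses, but several of your key steps have genuine gaps. First, your virtual dimension formula is incorrect: for an orbifold sphere passing through $k_i$ order-$2$ singular points with isotropy weights $(1,1)$, the index is $2d$ with $d=c_1(TX)\cdot C_i+2-\sum_j\tfrac{m_{j,1}+m_{j,2}}{m_j}-(3-k_i)=c_1(TX)\cdot C_i-1$, i.e. real dimension $b_i-2$, not $b_i+k_i-2$ (the weight sum $-k_i$ cancels the $+k_i$ from the marked points). The correct formula yields the two facts that drive the entire argument: $c_1(TX)\cdot C_i=\tfrac12 b_i$ is an \emph{integer} and, by genericity, is $\geq 1$. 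Since $c_1(TX)\cdot F=2$, this immediately forces either two simple components or a single component of multiplicity $2$ with $c_1(TX)\cdot C_i=1$, and it is what actually kills your $m=4$ case; your stated parity condition does not, since $m=4$, $b=1$, $k=3$ gives $b+k=4\in 2\Z$. Second, you use the adjunction \emph{equality} $C_i^2=\tfrac12(b_i+k_i)-2$, which presupposes the embeddedness you are trying to prove. For a possibly singular component, orbifold adjunction is an inequality with nonnegative correction terms $k_{[z,z']},k_z$, and the delicate part of the lemma is exactly to exclude the non-embedded configurations. In particular you never address the case $k=1$ with a local self-intersection contribution $k_z=\tfrac12$ at the singular point; the paper eliminates this by showing the resulting cusp, locally parametrized by $z_1=t^2$, $z_2=t^3+\cdots$, cannot be invariant under the local $\Z_2$-action $(z_1,z_2)\mapsto(-z_1,-z_2)$.

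Third, the multi-component case is not "a short case analysis." The paper first establishes, from genericity (no $J$-holomorphic $(-\alpha)$-spheres with $\alpha>1$ in the smooth locus, and no $(-1)$-sphere bubbling there), that \emph{every} component of the limit must contain a singular point of $X$; it then combines $C_1^2+2C_1\cdot C_2+C_2^2=0$ with positivity of the cross term, the orbifold intersection bound $C_1\cdot C_2\geq\tfrac12$, $b_2^{-}(X)=1$, and the adjunction inequality to force the component of negative square to miss the singular set entirely, a contradiction. Without the "every component meets the singular set" step, your elimination of multi-component strata does not go through. I would rework the proof around the corrected index formula and the adjunction \emph{inequality}, and supply the local $\Z_2$-invariance argument for the $k=1$ case.
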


\begin{proof}
Since $J$ is generic, there is no $J$-holomorphic $(-\alpha)$-sphere lying in the complement of the singular points of $X$ for any $\alpha>1$.  Moreover, just as in the smooth
case, $\{S_n\}$ can not split off a $J$-holomorphic $(-1)$-sphere lying entirely in the smooth
locus of $X$. It follows easily that in the Gromov limit $\sum_i m_i C_i\in
\overline{\M_J}\setminus \M_J$, each component $C_i$ must contain
a singular point of $X$.

With this understood, we take an arbitrary component $C_i$. Suppose $C_i$ contains $k>0$
singular points of $X$. Then we can pick an orbifold Riemann sphere $\Sigma$ with $k$
orbifold points of order $2$, which are denoted by $z_1,z_2,\cdots,z_k$, and find a $J$-holomorphic
map $f: \Sigma\rightarrow X$ parametrizing $C_i$. Recall that such a map $f$ near an orbifold
point $z_j$, assuming of order $m_j$, is given by a pair $(\hat{f}_j,\rho_j)$, where 
$\hat{f}_j: D\rightarrow \C^2$ is a local
lifting of $f$ near $z_j$ to the uniformizing system at $f(z_j)\in X$, and 
$\rho_j:\Z_{m_j}\rightarrow G_{f(z_j)}$ is an injective homomorphism to the isotropy group 
$G_{f(z_j)}$ at $f(z_j)\in X$, with respect to which $\hat{f}_j$ is equivariant. 
With this understood, we let $g\in\Z_{m_j}$ 
be the generator acting on $D$ by a rotation of angle $2\pi/m_j$, and let $(m_{j,1},m_{j,2})$, 
$0\leq m_{j,1},m_{j,2}<m_j$, be the weights of the action of $\rho_j(g)\in G_{f(z_j)}$
on $\C^2$. Then the dimension of the moduli space of $J$-holomorphic curves containing $C_i$ equals $2d$, where $d\in\Z$ and is given by
$$
d=c_1(TX)\cdot C_i +2 -\sum_{j=1}^k \frac{m_{j,1}+m_{j,2}}{m_j}-(3-k).
$$
See \cite{CR, ChenS1}. Note that in the present situation, $m_j=2$ and $m_{j,1}=m_{j_2}=1$
for each $j$. It follows easily that $d=c_1(TX)\cdot C_i-1$; in particular, $c_1(TX)\cdot C_i\in\Z$.
Moreover, since $J$ is generic, we have $d\geq 0$, which implies that $c_1(TX)\cdot C_i\geq 1$.

As an immediate corollary, we note that $\{C_i\}$ either consists of two components, each with multiplicity $1$, or a single component with multiplicity $2$, and moreover, $c_1(TX)\cdot C_i=1$
for each $i$. This is because $c_1(TX)\cdot F=2$,
and $F=\sum_i m_i C_i$. We can further rule out the possibility of two components as follows.
Suppose there are two components $C_1,C_2$ in $\{C_i\}$. Then 
$C_1^2+2C_1\cdot C_2+C_2^2=F^2=0$ implies that one of $C_1^2,C_2^2$ must be negative.
Without loss of generality, assume $C_1^2<0$. Then $C_2^2\geq 0$ because $b_2^{-}(X)=1$.
With this understood, we note that $C_1\cdot C_2\geq \frac{1}{2}$ by the orbifold intersection
formula in \cite{C0} (see also \cite{ChenS1}). This implies $C_1^2\leq -1$. Now we apply 
the orbifold adjunction inequality (cf. \cite{C0,ChenS1}) to $C_1$, which gives 
$$
C_1^2-c_1(TX)\cdot C_1+2\geq k(1-\frac{1}{2}).
$$
With $C_1^2\leq -1$ and $c_1(TX)\cdot C_1=1$, it follows that $k=0$, which is a contradiction. 
Hence the claim that there is only one component in $\{C_i\}$. 

Let $C$ denote the single component which has multiplicity $2$, and let $f: \Sigma\rightarrow X$
be a $J$-holomorphic parametrization of $C$. Then we note that $C^2=0$ and $c_1(TX)\cdot C=1$. Applying the orbifold adjunction formula to $C$ (cf. \cite{C0,ChenS1}), we get
$$
C^2-c_1(TX)\cdot C+2=k(1-\frac{1}{2})+\sum k_{[z,z^\prime]} +\sum k_z,
$$
where $k_{[z,z^\prime]},k_z\in \Q$ are nonnegative and have the following significance. For any $z,z^\prime\in \Sigma$, where $z\neq z^\prime$, such that $f(z)=f(z^\prime)$, the number 
$k_{[z,z^\prime]}>0$. Moreover, if $f(z)=f(z^\prime)$ is a smooth point of $X$, then 
$k_{[z,z^\prime]}\in\Z$. Likewise, for any $z\in\Sigma$, if $f$ is not a local orbifold embedding near
$z$, then $k_z>0$. Moreover, if $f(z)$ is a smooth point of $X$, then $k_z\in\Z$. With this
understood, it follows easily that $k\leq 2$, and if $k=2$, then all
$k_{[z,z^\prime]},k_z=0$, which means that $C$ is an embedded $2$-dimensional suborbifold.
To rule out the possibility that $k=1$, we first observe that in this case, $k_{[z,z^\prime]}\in\Z$.
This is because as $k=1$, we can not have a pair of points $z,z^\prime\in\Sigma$, where 
$z\neq z^\prime$, such that $f(z)=f(z^\prime)$ is a singular point of $X$. It follows easily 
that all $k_{[z,z^\prime]}$ must be zero, and $k_z=\frac{1}{2}$ at the unique singular
point $f(z)$ on $C$. The number $k_z$ is the local self-intersection number of $C$ at 
the singular point, and $k_z=\frac{1}{2}$ means that in the uniformizing system near the 
singular point, $C$ is given by a $J$-holomorphic (singular) disc with a local self-intersection 
$1$ at the origin. It follows that the singularity at the origin must be a cusp singularity and the
$J$-holomorphic disc is parametrized by a pair of functions $z_1=t^2, z_2=t^3+\cdots$, where
$t\in D$. However, it is clear that such defined $J$-holomorphic disc is not invariant under the 
$\Z_2$-action $(z_1,z_2)\mapsto (-z_1,-z_2)$, which is a contradiction. Hence $k=1$ is ruled out. 
This finishes the proof of the lemma. 

\end{proof}

It follows easily that the compactified moduli space $\overline{\M_J}$ gives rise to a $J$-holomorphic
$\s^2$-fibration on $X$, which contains $4$ multiple fibers, each with multiplicity $2$. We denote
by $\pi: X\rightarrow B$ the $\s^2$-fibration. It is easy to see that the base 
$B$ is an orbifold sphere, with
$4$ orbifold points of order $2$. Furthermore, note that for each $i$, 
$\pi |_{B_i}: B_i\rightarrow B$ is a branched covering in the complement of the multiple fibers
by Lemma 5.2.

To proceed further, we note that $c_1(K_X)=-\frac{1}{2}\sum_i B_i$, so that $(\sum_i B_i)\cdot F=4$.
Let $z_1,z_2,z_3,z_4$ be the orbifold points of $B$, and let $w_1,\cdots,w_k\in B$ be the points 
parametrizing those regular fibers which do not intersect transversely with $\cup_i B_i$.
We denote by $x_l$ the number of intersection points of $\cup_i B_i$ with the multiple fiber
at $z_l$, $l=1,2,3,4$, and denote by $y_j$ the number of intersection points of $\cup_i B_i$ with 
the regular fiber at $w_j$, where $j=1,2,\cdots,k$. Then note that $x_l\leq 2$ and $y_j<4$ for
each $l,j$. On the other hand, we observe the following relation in Euler numbers:
$$
\sum_i\chi(B_i)-\sum_{j=1}^k y_j-\sum_{l=1}^4 x_l=4(\chi(|B|)-k-4),
$$
where $|B|=\s^2$ is the underlying space of $B$. 
With $x_l\leq 2$ and $y_j<4$, it follows easily that $k$ must be zero, and $x_l=2$ for each $l$.
This means that $\cup_i B_i$ intersects each regular fiber transversely at $4$ points and 
intersects each multiple fiber at $2$ points. 

Finally, we observe that $X=|M/G|$, i.e., $X$ is the
symplectic $4$-orbifold obtained by de-singularizing $M/G$ along the $2$-dimensional 
singular components. With this understood, it is easy to see that under the
projection $M\rightarrow X=|M/G|$, the pre-image of each regular fiber in the
$\s^2$-fibration on $X$ is a symplectic $T^2$ in $M$, giving rise to a $T^2$-fibration over $B$
on $M$ (here we use the fact that $\cup_i B_i$ intersects each regular fiber transversely at 
$4$ points and the projection $M\rightarrow X$ is a double cover branched over $\cup_i B_i$).
Moreover, the pre-image of each multiple $\s^2$-fiber is a multiple $T^2$-fiber
of multiplicity $2$ in the $T^2$-fibration on $M$. It is known that such a $4$-manifold $M$
is diffeomorphic to a hyperelliptic surface or a secondary Kodaira surface, see \cite{FM}.
Since $b_1(M)\neq 1$, $M$ must be diffeomorphic to a hyperelliptic surface.
This finishes the proof of Theorem 1.1.

\vspace{1.5mm}

\noindent{\bf Proof of Theorems 1.2 and 1.3:}

\vspace{1.5mm}

Suppose $G$ is of prime order $p$. The case where $M_G$ has torsion canonical class is contained in Lemmas 2.1 and 2.8(2), and the case where $M_G$ is irrational ruled is in Lemmas 2.2 and 2.6(i), with $p=2$ or $3$ from the proof of Theorem 1.1.

Suppose $M_G$ is rational. Then by Lemmas 2.3, 2.4, 2.6 and 2.8, the order $p=2,3$ or $5$. 
Concerning the fixed-point set structure, the case of $G=\Z_2$ follows readily from the proof of Theorem 1.1. For $G=\Z_3$, the fixed-point set structure for
the isolated points is determined in Lemmas 2.4 and 2.9. Regarding the $2$-dimensional fixed components, we explore the embedding $D\rightarrow M_G$. 
In order to determine $M_G$ in each
case, we use the formula in Proposition 3.2 of \cite{C3} to determine $c_1(K_{M_G})$, based on the
singular set structure of the quotient orbifold $M/G$, then we compute $c_1(K_{M_G})^2$. 
This allows us
to determine the diffeomorphism type of $M_G$ as $M_G$ is a rational $4$-manifold. 
In the case of $b_1(M)=2$, it is easy to see that $M_G=\C\P^2\# 10 \overline{\C\P^2}$.
If the set of $2$-dimensional fixed components is nonempty, Proposition 4.3 implies that it must consist of a single torus.
In the case of $b_1(M)=4$, $M_G=\C\P^2\# 12 \overline{\C\P^2}$, and Proposition 4.4 implies that there are no 
$2$-dimensional fixed components. 
For $G=\Z_5$ where $b_1(M)=4$, 
the fixed-point set structure for the isolated points is determined in Lemma 2.10.
The possible $2$-dimensional fixed components are excluded by Proposition 4.5. 

For the case where $G$ is of non-prime order, the order of $G$ and the fixed-point set structure are determined in Lemmas
2.11 and 2.12. This completes the discussion on Theorems 1.2 and 1.3.

\vspace{1.5mm}

\noindent{\bf Proof of Theorem 1.4:}

First, consider the case of $N=8$. We begin by showing that one can choose a symplectic structure
on $X$ such that the area constraints in Lemma 5.1 are fulfilled. To see this, by Lemma 4.1 we can 
choose symplectic structures $\omega$ on $X$ such that one of the $8$ symplectic $(-2)$-spheres has area $\delta_1$ and the remaining $7$ symplectic $(-2)$-spheres have area $\delta_2$, where
$\delta_2<\delta_1<2\delta_2$, and $\delta_1,\delta_2$ can be arbitrarily small. It remains to
show that one can arrange so that $7\delta_i< -c_1(K_X)\cdot [\omega]$, $i=1,2$, hold true. 
For this,
we recall the fact that for $X=\C\P^2\# N \overline{\C\P^2}$, where $N\leq 8$, 
$-c_1(K_X)$ can be represented by pseudo-holomorphic curves, and moreover,
one can require the pseudo-holomorphic curves to pass through any given point in $X$,
see Taubes \cite{T}. We pick a point $x_0\in X$ in the complement of the $8$ symplectic $(-2)$-spheres and require the pseudo-holomorphic curves representing $-c_1(K_X)$ to pass through
$x_0$. Then it is easy to see that no matter how small we choose the areas $\delta_1,\delta_2$,
$-c_1(K_X)\cdot [\omega]>\delta_0$ for some $\delta_0$ independent of the choice of 
$\delta_1,\delta_2$. It follows that we can arrange so that $7\delta_i< -c_1(K_X)\cdot [\omega]$, $i=1,2$, hold true. 

With the preceding understood, by the same argument as in Lemma 5.1, 
we can show that the homology classes of the $8$ symplectic $(-2)$-spheres must be
given as in case (a) of Lemma 5.1. Then by Lemma 3.2, we can successively symplectically blow
down the $E_{i_s}$ classes for $s\geq 2$ and reach to the $4$-manifold 
$\C\P^2\# \overline{\C\P^2}$, with $E_{i_1}$ being the $(-1)$-class, such that the $7$ symplectic 
$(-2)$-spheres $F_2,F_3,\cdots,F_8$ descend to a configuration of symplectic spheres 
of the class $H$,
which intersect transversely and positively according to the incidence relation of the Fano plane;
that is, the $7$ spheres intersect in $7$ points, where each point is contained in $3$ spheres. 
By a theorem of Ruberman and Starkston (cf. \cite{RuS}), such a configuration cannot exist in 
$\C\P^2$. Thus to derive a contradiction, we need to represent the class $E_{i_1}$ by a 
symplectic $(-1)$-sphere in the complement of the $7$ symplectic spheres,
to further blow down $\C\P^2\# \overline{\C\P^2}$.

To this end, we note that the configuration of $7$ symplectic spheres in $\C\P^2\# \overline{\C\P^2}$
is $J$-holomorphic with respect to some compatible almost complex structure $J$. On the other
hand, the class $E_{i_1}$ is represented by a finite set of $J$-holomorphic curves 
$\sum_i m_i C_i$ by Taubes' theorem (cf. \cite{LiLiu0}). Now the key observation is that if there are 
more than one components in $\{C_i\}$, then one of them must have a negative 
$a$-coefficient in the reduced basis $H, E_{i_1}$. But such a component intersects negatively with 
any of the $7$ $J$-holomorphic spheres in the configuration (which has class $H$). This is a contradiction, hence $E_{i_1}$ must be represented by a single $J$-holomorphic curve, 
which is a $(-1)$-sphere and lies in the complement of the configuration of $7$ symplectic spheres. 
This finishes the proof for the case of $N=8$. 

The argument for the case of $N=7$ is similar. For $N=9$, it is easy to see from Lemma 5.1 
that the homology class for the $9$-th symplectic $(-2)$-sphere does not exist. This completes the proof of Theorem 1.4.

\vspace{1.5mm}

{\bf Acknowledgement:} We thank R. Inanc Baykur, Tian-Jun Li, and Weiwei Wu for useful communications. We are also grateful to an anonymous referee whose comments help improve the
presentation of this paper.

\vspace{2mm}

{\Small University of Massachusetts, Amherst.\\
{\it E-mail:} wchen@math.umass.edu

\end{document}